\newcommand{\dblq}{{/\!/}}
\newtheorem{theorem}{Theorem}[section]
\newtheorem{proposition}[theorem]{Proposition}
\newtheorem{corollary}[theorem]{Corollary}
\newtheorem{lemma}[theorem]{Lemma}
\theoremstyle{definition}
\newtheorem{definition}[theorem]{Definition}
\newtheorem{remark}[theorem]{Remark}
\newtheorem{conjecture/question}[theorem]{Conjecture/Question}
\newtheorem{remark/definition}[theorem]{Remark/Definition}
\newtheorem{terminology/notation}[theorem]{Terminology/Notation}
\def\GG{{\textbf G}}
\def\PP{{\textbf P}}
\def\OO{\mathcal{O}}
\def\cN{\mathcal{N}}
\def\cA{\mathcal{A}}
\def\F{\mathcal{F}}
\def\P{\mathcal{P}}
\def\cS{\mathcal{S}}
\def\K{\mathcal{K}}
\def\I{\mathcal{I}}
\def\cM{\mathcal{M}}
\def\cR{\mathcal{R}}
\def\rr{\overline{\mathcal{R}}}
\def\cU{\mathcal{U}}
\def\Pic0{{\rm Pic}^0(X)}
\def\mm{\overline{\mathcal{M}}}
\def\ss{\overline{\mathcal{S}}}
\def\kk{\overline{\mathcal{K}}}
\def\thet{\overline{\Theta}_{\mathrm{null}}}
\begin{document}
\title{Moduli of theta-characteristics via Nikulin surfaces}

\author[G. Farkas]{Gavril Farkas}

\address{Humboldt-Universit\"at zu Berlin, Institut F\"ur Mathematik,  Unter den Linden 6
\hfill \newline\texttt{}
 \indent 10099 Berlin, Germany} \email{{\tt farkas@math.hu-berlin.de}}
\thanks{}

\author[A. Verra]{Alessandro Verra}
\address{Universit\'a Roma Tre, Dipartimento di Matematica, Largo San Leonardo Murialdo \hfill
 \newline \indent 1-00146 Roma, Italy}
 \email{{\tt
verra@mat.unirom3.it}}

\maketitle

The importance of the locus $\K_g:=\{[C]\in \cM_g: C \mbox{ lies on a } K3 \mbox{ surface}\}$
has been recognized for some time. Fundamental results in the theory of algebraic curves
like the Brill-Noether Theorem \cite{Laz}, or Green's Conjecture for generic curves \cite{Vo} have been proved by specialization to a general point $[C]\in \K_g$.
The variety $\K_g$ viewed as a subvariety of $\cM_g$ serves as an obstruction for effective divisors on $\mm_g$ to having small slope
\cite{FP} and thus plays a significant role in determining the cone of effective divisors on $\mm_g$.
\vskip 3pt
The first aim of this paper is to show that at the level of the the \emph{Prym moduli space} $\cR_g$ classifying \'etale double covers of curves of genus $g$, the locus of curves lying on a \emph{Nikulin $K3$ surfaces} plays a similar role. The analogy is far-reaching: Nikulin surfaces furnish an explicit unirational parametrization of $\cR_g$ in small genus, see Theorem \ref{uniruledpar}, just like ordinary $K3$ surfaces do the same for $\cM_g$;
numerous results involving curves on $K3$ surfaces have a Prym-Nikulin analogue, see Theorem \ref{r7}, and even exceptions to uniform statements concerning curves on $K3$ surfaces carry over in this analogy!
\vskip 1pt

Our other aim is to complete the birational classification of the moduli space $\ss_g^+$ of even spin curves of genus $g$.
It is known \cite{F1} that $\ss_g^+$ is of general type when $g\geq 9$. Using Nikulin surfaces we show that $\ss_g^+$ is uniruled for $g\leq 7$, see Theorem \ref{eventheta}, which leaves $\ss_8^+$ as the only case missing from the classification. We prove the following:
\begin{theorem}\label{spin8}
The Kodaira dimension of $\ss_8^+$ is equal to zero.
\end{theorem}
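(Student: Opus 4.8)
The plan is to establish the two estimates $\kappa(\ss_8^+)\ge 0$ and $\kappa(\ss_8^+)\le 0$ by separate arguments. The first follows from exhibiting an explicit effective $\QQ$-representative of the canonical class; the second — the substantial half — rests on the geometry of curves on Nikulin (and $K3$) surfaces, and is exactly where genus $8$ is the delicate borderline value, in the same way that it is the last genus for which $\cM_g=\K_g$.

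\textbf{Lower bound.} First I would compute $K_{\ss_8^+}$ in terms of the Hodge class $\lambda$ and the boundary divisors of $\ss_8^+$. The forgetful map $\pi\colon\ss_8^+\to\mm_8$ is finite and is branched only along the component of the boundary over $\Delta_0$ whose generic point is a nodal curve carrying a non-vanishing (``Ramond'') even theta-characteristic; combining the Riemann--Hurwitz formula for $\pi$ with the Harris--Mumford expression $K_{\mm_8}\equiv 13\lambda-2\delta_0-3\delta_1-2\delta_2-\cdots$ yields a formula $K_{\ss_8^+}\equiv 13\lambda-(\text{boundary})$ with the coefficient $13$ of $\lambda$ unchanged. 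Next I would invoke the class of the theta-null divisor $\thet\subset\ss_8^+$ (computed by Farkas; its coefficient of $\lambda$ is the small value $\tfrac14$) and the pullback $\pi^{*}[\mm{}^2_{8,7}]$ of the Brill--Noether divisor on $\mm_8$ parametrising curves with a $g^2_7$ — the unique Brill--Noether divisor in genus $8$, of slope $6+\tfrac{12}{9}=\tfrac{22}{3}$. One then seeks nonnegative rationals $a,b,c_i$ with
\[
K_{\ss_8^+}\ \equiv\ a\,\thet\ +\ b\,\pi^{*}[\mm{}^2_{8,7}]\ +\ \sum_i c_i\,E_i,\qquad a,b,c_i\ge 0,
\]
the $E_i$ ranging over the boundary divisors of $\ss_8^+$; the arithmetic balances at $g=8$ with \emph{no} surplus ample part — exactly the phenomenon that makes $\ss_9^+$ of general type, mirrored on the spin side. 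Hence $\kappa(\ss_8^+)\ge 0$.

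\textbf{Upper bound.} Here I would exploit the analogy promised in the introduction: just as the slope $6+\tfrac{12}{g+1}$ of the Brill--Noether divisors is computed by Lefschetz pencils of curves on $K3$ surfaces, the numerology governing $K_{\ss_8^+}$ is controlled by pencils of curves on Nikulin surfaces together with compatible theta-characteristics. Concretely, the goal is to construct a \emph{covering} family $\{R_t\}$ of complete curves in $\ss_8^+$ — built from such Nikulin pencils, the relative space $\AUX$ of degree-$5$ pencils serving to organise them — with $K_{\ss_8^+}\cdot R_t=0$ for all $t$. The intersection numbers $\lambda\cdot R_t,\ \alpha_0\cdot R_t,\ \beta_0\cdot R_t$ are computed from the Euler characteristic of a $K3$ surface and from Cornalba's description of how theta-characteristics degenerate over $\Delta_0$, including the ramification of $\pi$ along the Ramond boundary; the point is that at $g=8$ these combine, via the formula for $K_{\ss_8^+}$ above, to give precisely $0$. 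Since the $R_t$ sweep out $\ss_8^+$ and $K_{\ss_8^+}\cdot R_t=0$, for every $m$ the moving part of $|mK_{\ss_8^+}|$ is a divisor disjoint from a covering family of curves and therefore vanishes, so $h^0(\ss_8^+,mK_{\ss_8^+})\le 1$ for all $m$, i.e.\ $\kappa(\ss_8^+)\le 0$. Combining the two estimates gives $\kappa(\ss_8^+)=0$.

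\textbf{Main obstacle.} The hard part is the upper bound, for two intertwined reasons. First, one must actually produce the covering family: this requires Nikulin surfaces whose curves carry an even theta-characteristic varying algebraically in the pencil — a constraint on the Picard lattice — together with the verification that the resulting curves genuinely cover $\ss_8^+$ rather than filling a proper subvariety, which is where the Nikulin-surface parametrisation and the $14$ pencils of degree $5$ on a genus-$8$ curve are used. Second, the intersection-number bookkeeping on $\ss_8^+$, in particular the contribution of the ramification divisor $\beta_0$ of $\pi$ along each $R_t$, must come out to exactly $0$; any positive slack would instead force $\kappa(\ss_8^+)=21$, so the numerical coincidences at $g=8$ among $13$, $\tfrac14$, $\tfrac{22}{3}$ and the ramification data have to be checked with care. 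The lower bound is comparatively routine once the class of $\thet$ and the slope of $\mm{}^2_{8,7}$ are available, but it too depends on this tightness: the decomposition must have all boundary coefficients nonnegative and no leftover big class.
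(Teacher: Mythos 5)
Your lower bound is correct and matches the paper: using the classes of $\thet$ and of the Brill--Noether divisor $\mm_{8,7}^2$ one writes
\begin{equation*}
K_{\ss_8^+}\ \equiv\ \frac{1}{2}\,\pi^*(\mathfrak{bn}_8)\ +\ 8\,\thet\ +\ \sum_{i=1}^4(a_i\,\alpha_i+b_i\,\beta_i),\qquad a_i,b_i>0,
\end{equation*}
an effective $\QQ$-class, hence $\kappa(\ss_8^+)\geq 0$. Your upper bound, however, rests on a false principle. You propose to produce a family $\{R_t\}$ of curves \emph{covering all of} $\ss_8^+$ with $K_{\ss_8^+}\cdot R_t=0$, and to conclude that the moving part of every $|mK_{\ss_8^+}|$ vanishes. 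But a covering family of curves with zero intersection against an effective divisor need not force that divisor to be trivial, nor need it bound the Kodaira--Iitaka dimension by $0$. On $X=\PP^1\times\PP^1$ with $D=\{0\}\times\PP^1$, the rulings $R_p=\{p\}\times\PP^1$ form a covering family through a general point with $D\cdot R_p=0$ for all $p$, yet $\kappa(X,D)=1$ and the moving part of $|mD|$ is $mD\neq 0$. The problem is that some of the $R_t$ may lie inside the divisor; a family covering the total space gives no grip on fixed components. A further point: the Nikulin construction is only dominant onto $\cR_g$ for $g\leq 7$, so there is no pencil of curves on a Nikulin surface sweeping a general point of genus $8$ moduli; the $K3$ surfaces used in the genus $8$ argument are the doubly-elliptic ones, not Nikulin surfaces.

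The mechanism the paper actually uses, stated at the start of Section 4, is the rigidity criterion: to show $D=\sum m_iD_i$ is rigid one finds, for each irreducible component $D_i$, a curve $\Gamma_i$ through a \emph{general point of $D_i$} (so covering $D_i$, not $\ss_8^+$) with the strict inequality $\Gamma_i\cdot D_i<0$ and $\Gamma_i\cdot D_j=0$ for $j\neq i$. The negativity, not a vanishing, is what forces $D_i$ into the base locus of $|nD|$ for all $n$. For the component $\thet$ of $K_{\ss_8^+}$ the required curve $R$ is produced in Proposition \ref{pencil8} from a pencil of hyperplane sections of a doubly-elliptic $K3$ surface in $\PP^8$, with $R\cdot\thet=-1$, $R\cdot\pi^*(\mm_{8,7}^2)=0$, and $R$ disjoint from the higher boundary; this strips $8n\thet$ off $|nK_{\ss_8^+}|$. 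The Brill--Noether part is then made rigid by lifting the pencil of nodal plane septics from Proposition \ref{septics}, which has $B\cdot\mm_{8,7}^2<0$ and $B\cdot\delta_i=0$ for $i\geq 1$. Your sketch needs to be recast in this form: not a single covering family of $\ss_8^+$ with $K\cdot R=0$, but one covering family per component of $K_{\ss_8^+}$, each with negative intersection against its own component and zero against the others.
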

Theorems \ref{spin8} and \ref{eventheta} highlight the fact that the birational type of $\ss_g^+$ is entirely governed by the world of $K3$ surfaces, in the sense  that $\ss_g^+$ is uniruled precisely when a general even spin curve of genus $g$ moves on a special $K3$ surface. This is in contrast to $\mm_g$ which  is known to be uniruled at least for $g\leq 16$, whereas the general curve of genus $g\geq 12$ does not lie on a $K3$ surface.
\vskip 2pt

A \emph{Nikulin  surface} \cite{Ni} is a $K3$ surface $S$ endowed with a non-trivial double cover
$$f: \tilde S \rightarrow S $$
with a branch divisor
$N:=N_1 + \cdots + N_8$
consisting of $8$ disjoint smooth rational curves $N_i\subset S$. Blowing down the $(-1)$-curves $E_i:=f^{-1}(N_i)\subset \tilde{S}$, one obtains a minimal $K3$ surface $\sigma:\tilde{S}\rightarrow Y$, together with an involution $\iota\in \mbox{Aut}(Y)$ having $8$ fixed points corresponding to the images $\sigma(E_i)$ of the exceptional divisors. The class $\mathcal O_S(N)$ is divisible by $2$ in $\mathrm{Pic}(S)$ and we set $e:=\frac{1}{2}\OO_S(N_1+\cdots+N_8)\in \mathrm{Pic}(S).$  Assume that $C\subset S$ is a smooth curve of genus $g$ such that
$C\cdot N_i = 0$ for $i=1, \ldots, 8$. We say that the triple $(S, e, \mathcal O_S(C))$ is a \emph{polarized Nikulin surface of genus $g$} and denote by $\F_g^{\mathfrak{N}}$ the $11$-dimensional moduli space of such objects. Over $\F_g^{\mathfrak{N}}$ we consider the $\PP^g$-bundle
$$\mathcal{P}_g^{\mathfrak{N}}:=\Bigl\{(S, e, C): C\subset S \mbox{ is a smooth curve such that }\  [S, e, \OO_S(C)]\in \F_g^{\mathfrak{N}}\Bigr\},$$
which comes equipped with two maps
$$\xymatrix{
  & \mathcal{P}_g^{\mathfrak{N}} \ar[dl]_{p_g} \ar[dr]^{\chi_g} & \\
   \F_g^{\mathfrak{N}} & & \cR_{g}       \\
                 }$$
where $p_g([S, e, C]):=[S, e, \OO_S(C)]$ and $\chi_g([S, e, C]):=[C, e_C:=e\otimes \OO_C]$.
Since $C\cdot N=0$, it follows that $e_C^{\otimes 2}=\OO_C$. The \'etale double cover induced by $e_C$ is precisely the restriction
$
f_C:=f_ {|\tilde C}: \tilde C \rightarrow C,
$
where $\tilde{C}:=f^{-1}(C)$.
Note that $\mbox{dim}(\mathcal{P}_g^{\mathfrak{N}})=11+g$ and it is natural to ask when is $\chi_g$  dominant and induces a uniruled parametrization of $\cR_g$.
\begin{theorem}\label{uniruledpar}
The general Prym curve $[C, e_C]\in \cR_g$ lies on a Nikulin surface if and only if $g\leq 7$ and $g\neq 6$, that is, the morphism $\chi_g:\mathcal{P}_g^{\mathfrak{N}}\rightarrow \cR_g$ is dominant precisely in this range.
\end{theorem}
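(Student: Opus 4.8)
The bound $g\leq 7$ is necessary by a dimension count: since $\cR_g\to\cM_g$ is finite we have $\dim\cR_g=3g-3$, whereas $p_g$ presents $\mathcal{P}_g^{\mathfrak{N}}$ as a $\PP^g$-bundle over the $11$-dimensional space $\F_g^{\mathfrak{N}}$, so $\dim\mathcal{P}_g^{\mathfrak{N}}=g+11$; hence $\chi_g$ can be dominant only if $g+11\geq 3g-3$, i.e. $g\leq 7$, the two dimensions agreeing exactly at $g=7$. It then remains to prove that $\chi_g$ is dominant for $g\in\{2,3,4,5,7\}$ and that it fails to be dominant for $g=6$.

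For $2\leq g\leq 5$ I would build the Nikulin surfaces by hand. A general Prym curve of such small genus has an elementary projective model --- for $g=2$ it is hyperelliptic and lies on a double plane $S\to\PP^2$ whose branch sextic carries an even set of eight nodes producing the $N_i$, and for $3\leq g\leq 5$ one argues similarly with the appropriate classical models --- and one checks directly that the $2$-torsion class cut out by the Nikulin line bundle is the prescribed $e_C=e\otimes\OO_C$. Since $\dim\mathcal{P}_g^{\mathfrak{N}}$ comfortably exceeds $\dim\cR_g$ in this range, exhibiting one such surface through the general $[C,e_C]$ already gives dominance. More uniformly, dominance reduces to surjectivity at a general $[S,e,C]$ of $d\chi_g\colon T_{[S,e,C]}\mathcal{P}_g^{\mathfrak{N}}\to T_{[C,e_C]}\cR_g=H^1(C,T_C)$; chasing the exact sequences $0\to T_C\to T_S|_C\to N_{C/S}\to 0$ and $0\to T_S\otimes\OO_S(-C)\to T_S\to T_S|_C\to 0$, and using the adjunction isomorphism $N_{C/S}\cong\omega_C$ and the identification $T_S\cong\Omega^1_S$ on a $K3$ surface, this comes down to the vanishing $H^0(S,\Omega^1_S\otimes\OO_S(C))=0$ together with a transversality statement for how periods transverse to the rank-$9$ lattice $\langle\OO_S(C)\rangle\oplus N$ restrict to $C$.

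The genus $7$ case is the delicate one, since $\dim\mathcal{P}_7^{\mathfrak{N}}=\dim\cR_7=18$ forces $\chi_7$ to be generically finite as well as dominant. Here I would use Mukai's realization of a general $K3$ surface of genus $7$ as a transverse linear section of the $10$-dimensional spinor variety $\mathrm{OG}(5,10)\subset\PP^{15}$, the eight rational curves $N_i$ arising by degenerating the section to one through eight nodes, and match $e_C$ to this model via the Mukai bundle attached to $\omega_C$. Then one verifies that $d\chi_7$ is injective --- hence an isomorphism, both sides having dimension $18$ --- at a general point by a cohomology computation on this explicit model, propagated to the general point by semicontinuity; this yields dominance and finiteness of a general fibre simultaneously.

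Finally, for $g=6$ the dimension count is inconclusive, as $\dim\mathcal{P}_6^{\mathfrak{N}}=17>15=\dim\cR_6$, so I would instead show $\chi_6(\mathcal{P}_6^{\mathfrak{N}})$ lies in a proper closed subvariety of $\cR_6$, paralleling the exceptional failure of a general curve of genus $10$ to lie on a $K3$ surface. Put $M:=\OO_S(C)\otimes e^{-1}$; one checks on the Picard lattice $\langle\OO_S(C)\rangle\oplus N$ that $M$ is nef and big with $M^2=6$ and that $\OO_S(C)\otimes e=M\otimes\OO_S(\sum_i N_i)$ has the eight curves $N_i$ as reduced fixed components. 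As $C\cdot N_i=0$, both $M$ and $\OO_S(C)\otimes e$ restrict on $C$ to the Prym-canonical bundle $\omega_C\otimes e_C$. Hence the Prym-canonical image of $[C,e_C]$ in $\PP^4$ lies on $\varphi_M(S)$, which for general $S$ is a complete intersection of a quadric and a cubic; in particular it lies on a quadric, so the multiplication map $\mathrm{Sym}^2 H^0(C,\omega_C\otimes e_C)\to H^0(C,\omega_C^{\otimes 2})$ between two $15$-dimensional spaces is not injective. This is the vanishing of a $15\times 15$ determinant, hence defines a virtual divisor on $\cR_6$; proving it is a genuine proper divisor --- by exhibiting one Prym curve of genus $6$ whose Prym-canonical model lies on no quadric --- gives $\dim\chi_6(\mathcal{P}_6^{\mathfrak{N}})\leq 14$, so $\chi_6$ is not dominant. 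The two real difficulties I anticipate are the borderline genus $7$ computation, where existence and finiteness of the Nikulin surfaces through a general Prym curve must be controlled at once, and the verification that the genus $6$ obstruction locus is proper.
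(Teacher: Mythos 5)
Your dimension count and your treatment of $g=6$ match the paper's approach in substance. In particular, your observation that $M:=\OO_S(C)\otimes e^{-1}$ has $M^2=6$, embeds $S$ as a $(2,3)$ complete intersection in $\PP^4$, and hence forces the Prym-canonical curve to lie on a quadric, so that $\cN_6$ is contained in the locus where $\mathrm{Sym}^2 H^0(C,\omega_C\otimes\eta)\to H^0(C,\omega_C^{\otimes 2})$ degenerates, is exactly the paper's insight behind Theorem \ref{gen6}. (The paper establishes properness by counting: the Grassmannian model of Section 3 shows the general $\chi_6$-fibre over $\cN_6$ is a $\PP^3$, so $\dim\cN_6=17-3=14<15$; your proposal to exhibit a single Prym curve off the quadric works too.)

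The genus $7$ case, which is the crux, diverges from the paper in a way that would derail the proof. You propose to run Mukai's linear-section model $\mathrm{OG}(5,10)\subset\PP^{15}$ for \emph{general} genus-$7$ K3 surfaces and then degenerate to acquire the eight $(-2)$-curves. But a Nikulin surface has Picard number $\geq 9$, so $\F_7^{\mathfrak{N}}$ sits in codimension $8$ inside $\F_7$, and tracking that degeneration (eight nodes, identification of $e_C$) inside the spinor variety is at least as hard as the original question. The paper's move, which you never consider, is to polarize $S$ not by $\OO_S(C)$ but by the smaller class $H:=C-e$, which satisfies $H^2=2g-6=8$ and restricts on $C$ to the Prym-canonical line bundle $K_C\otimes e_C$. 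Then $\phi_H:S\hookrightarrow\PP^5$ is a degree-$8$ K3 cut out by quadrics (Lemma \ref{easyfacts}), and these are precisely the three quadrics through the Prym-canonical curve $C\subset\PP^5$: Lemma \ref{aux7} and Proposition \ref{gen7} show $S=\mathrm{bs}\,|\I_{C/\PP^5}(2)|$. This one observation simultaneously yields dominance, generic finiteness, and the explicit inverse $\chi_7^{-1}$ of Theorem \ref{r7}, and there is nothing left to ``verify by semicontinuity.'' Also, logically, equality of dimensions does not by itself ``force'' dominance as you assert; dominance must be produced, and here it is produced by the reconstruction of $S$.

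For $g\leq 5$ your sketch is too vague to assess: ``classical models'' and ``one checks directly'' do not specify a construction, and the $g=2$ claim about an even set of eight nodes on the branch sextic of a double plane is not obviously right as stated. The paper gives explicit constructions: for $g=4$ and $g=5$ the Nikulin surface arises inside the symmetric determinantal cubic $\Sigma=\mathrm{Sec}(V_4)\subset\PP^5$ via its double cover by $\PP^2\times\PP^2$ (using a quartic rational normal curve for $g=4$, and a Prym--Brill--Noether $\mathfrak g^2_8$ on $\tilde C$ for $g=5$), and for $g=3$ one takes an involution-invariant complete intersection of three quadrics in $\PP^5$ and projects to a nodal quartic in $\PP^3$. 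Your alternative differential strategy (surjectivity of $d\chi_g$ via $H^0(S,\Omega^1_S\otimes\OO_S(C))=0$ and a transversality of periods) is a legitimate but unexecuted idea and is not the route the paper takes.
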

In contrast, the general Prym curve $[C, e_C]\in \cR_6$ lies on an Enriques surface \cite{V1} but not on a Nikulin surface. Since $\mathcal{P}_g^{\mathfrak N}$ is a uniruled variety being a $\PP^g$-bundle over $\F_g^{\mathfrak{N}}$, we derive from Theorem \ref{uniruledpar} the following immediate consequence:
\begin{corollary}\label{uniruled2}
The Prym moduli space $\cR_g$ is uniruled for $g\leq 7$.
\end{corollary}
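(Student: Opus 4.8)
The plan is to transfer uniruledness from the incidence varieties of surfaces-with-curves down to $\cR_g$ along the relevant dominant maps, treating the Nikulin range $g\le 7$, $g\neq 6$ and the exceptional genus $6$ separately.

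For $g\le 7$ with $g\neq 6$, Theorem \ref{uniruledpar} gives that $\chi_g:\mathcal{P}_g^{\mathfrak{N}}\rightarrow\cR_g$ is dominant, and since $\mathcal{P}_g^{\mathfrak{N}}$ is a $\PP^g$-bundle over $\F_g^{\mathfrak{N}}$ it is covered by the lines $\ell$ lying in the fibres $|\OO_S(C)|\cong\PP^g$ of $p_g$. It therefore suffices to check that $\chi_g$ does not contract the general fibre of $p_g$; granting this, the images $\chi_g(\ell)$ of general such lines sweep out a dense subset of $\cR_g$ by rational curves, which is precisely uniruledness. For the non-contraction I would run a dimension count: if $\chi_g$ were constant on the general fibre of $p_g$ it would factor rationally through $\F_g^{\mathfrak{N}}$, forcing $3g-3=\dim\cR_g\le\dim\F_g^{\mathfrak{N}}=11$, i.e.\ $g\le 4$. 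Hence for $g\in\{5,7\}$ nothing further is needed; for $g=7$ the map $\chi_7$ is even a dominant morphism between varieties of the same dimension $18$, so it is generically finite. For $g\le 4$ one checks separately that $\chi_g$ is non-constant on the general fibre of $p_g$ — e.g.\ a general pencil of genus $g$ curves on a general Nikulin surface is non-isotrivial — or else invokes the classical unirationality of $\cR_g$ in that range.

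For $g=6$, where $\chi_6$ is no longer dominant, the plan is to substitute Enriques surfaces for Nikulin surfaces, using \cite{V1}: the general Prym curve $[C,e_C]\in\cR_6$ lies on an Enriques surface $S$, with $e_C=\omega_S\otimes\OO_C$ induced by the canonical $2$-torsion class of $S$. I would form the analogue of the above diagram over the moduli space of polarized Enriques surfaces carrying a polarization of square $C^2=2g-2=10$, a moduli space known to be unirational, namely the projective bundle whose fibre over $[S]$ is $|\OO_S(C)|\cong\PP^{5}$ together with the map $(S,C)\mapsto[C,\omega_S\otimes\OO_C]\in\cR_6$. By \cite{V1} this map is dominant, and source and target have the same dimension $15$, so it is generically finite; the source is a projective bundle over a unirational base, hence uniruled, and uniruledness descends to $\cR_6$ just as above.

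Most of the real content is packaged into Theorem \ref{uniruledpar} and the result of \cite{V1}; given those the argument is essentially formal. The step I expect to be the main obstacle is the genus $6$ case, which has no place in the Nikulin framework and forces the parallel Enriques-surface construction, together with the input that the relevant moduli of polarized Enriques surfaces is unirational; a far milder secondary point is the non-contraction of $p_g$-fibres for $g\le 4$.
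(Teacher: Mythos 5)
Your proof is correct and follows essentially the same route as the paper: uniruledness of $\cR_g$ is deduced from the fact that $\mathcal{P}_g^{\mathfrak{N}}$ is a $\PP^g$-bundle (hence covered by rational curves) together with dominance of $\chi_g$ for $g\le 7$, $g\ne 6$ (Theorem \ref{uniruledpar}), and for $g=6$ the paper, like you, falls back on the Enriques-surface parametrization of \cite{V1}. The one place you go beyond the text is in making explicit why $\chi_g$ cannot contract the $p_g$-fibres --- the dimension count $3g-3>11$ for $g\ge 5$ and the separate remark for small $g$ --- a point the paper treats as tacit but which is genuinely needed, since a dominant image of a uniruled variety need not be uniruled when the covering rational curves are contracted.
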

The discussion in Sections 2 and 3 implies the stronger result that $\F_g^{\mathfrak{N}}$ (and thus $\cN_g:=\mbox{Im}(\chi_g)$) is unirational for $g\leq 6$. It was known that $\cR_g$ is rational for $g\leq 4$, see \cite{Do2}, \cite{Ca}, and unirational for $g=5, 6$, see \cite{D}, \cite{ILS}, \cite{V1}, \cite{V2}. Apart from the result in genus $7$ which is new, the significance of Corollary \ref{uniruled2} is that Nikulin surfaces provide an \emph{explicit uniform parametrization} of $\cR_g$ that works for all genera $g\leq 7$.
\vskip 3pt

Before going into a more detailed explanation of our results on $\mathcal{F}_g^{\mathfrak{N}}$, it is instructive to recall Mukai's work on the moduli space $\F_g$ of polarized $K3$ surfaces of genus $g$:
\vskip 3pt

\noindent \underline{Mukai's results \cite{M1}, \cite{M2}, \cite{M3}}:
\vskip 3pt

\noindent $(1)$ A general curve $[C]\in \cM_g$ lies on a $K3$ surface  if and only if $g\leq 11$ and $g\neq 10$, that is, the equality $\K_g=\cM_g$ holds precisely in this range.
\vskip 2pt

\noindent $(2)$  $\cM_{11}$ is birationally isomorphic to the tautological $\PP^{11}$-bundle $\mathcal{P}_{11}$ over the moduli space $\F_{11}$ of polarized $K3$ surfaces of genus $11$. There is a commutative diagram
$$\xymatrix{
\cM_{11} \ar@{<.}[rr]^{\cong}_{q_{11}} \ar[rd] &  & \mathcal{P}_{11} \ar[dl]^{p_{11}}&\\
&                         \F_{11}}$$
with $q_{11}^{-1}([C])=[S, C]$, where $S$ is the unique $K3$ surface containing a general $[C]\in \cM_{11}$.
\vskip 2pt

\noindent $(3)$ The locus $\K_{10}$ is a divisor on $\cM_{10}$ which has the following set-theoretic incarnation:
$$\K_{10}=\bigl\{[C]\in \cM_{10}: \exists L\in W^4_{12}(C) \mbox{ such that } \mu_0(L): \mathrm{Sym}^2 H^0(C, L) \stackrel{\ncong}\longrightarrow H^0(C, L^{\otimes 2})\bigr\}.$$

\noindent $(4)$ There exists a rational variety $X\subset \PP^{13}$ with $K_X=\OO_X(-3)$ and $\mathrm{dim}(X)=5$, such that the general $K3$ surface of genus $10$ appears as a $2$-dimensional linear section of $X$. Such a realization is unique up to the action of $\mathrm{Aut}(X)$ and one has birational isomorphisms:
$$
\F_{10} \stackrel{\cong}\dashrightarrow G\bigl(\PP^{10}, \PP^{13}\bigr)^{\mathrm{ss}}\dblq \mathrm{Aut}(X) \ \ \mbox{ and } \  \
\K_{10} \stackrel{\cong}\dashrightarrow G\bigl(\PP^{9}, \PP^{13}\bigr)^{\mathrm{ss}} \dblq \mathrm{Aut}(X).$$

To this list of well-known results, one could add the following statement from \cite{FP}:
\vskip 2pt

\noindent $(5)$ The closure $\kk_{10}$ of $\K_{10}$ inside $\mm_{10}$ is an extremal point in the effective cone $\mathrm{Eff}(\mm_{10})$; its class
$\kk_{10}\equiv 7\lambda-\delta_0-5\delta_1-9\delta_2-12\delta_3-14\delta_4-\cdots\in \mathrm{Pic}(\mm_{10})$ has minimal slope among all effective divisors on $\mm_{10}$ and provides a counterexample to the Slope Conjecture \cite{HMo}.
\vskip 4pt

Quite remarkably, each of the statements (1)-(5) has a precise Prym-Nikulin  analogue. Theorem \ref{uniruledpar} is the analogue of (1). For the highest genus when the Prym-Nikulin condition is generic, the moduli space acquires a surprising Mori fibre space structure:
\begin{theorem}\label{r7}
The moduli space $\cR_{7}$ is birationally isomorphic to the tautological $\PP^{7}$-bundle $\mathcal{P}_{7}^{\mathfrak{N}}$ and there is a commutative diagram:
$$\xymatrix{
\cR_{7} \ar@{<.}[rr]^{\cong}_{\chi_{7}} \ar[rd] &  & \mathcal{P}_{7}^{\mathfrak{N}} \ar[dl]^{p_{7}}&\\
&                         \F_{7}^{\mathfrak{N}}}$$
Furthermore, $\chi_7^{-1}([C, \eta])=[S, C]$, where the unique Nikulin surface $S$ containing $C$  is given by the base locus of the net of quadrics containing the Prym-canonical embedding
$\phi_{K_C\otimes \eta}:C\rightarrow \PP^5$.
\end{theorem}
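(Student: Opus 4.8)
The plan is to establish the birational isomorphism $\chi_7: \mathcal{P}_7^{\mathfrak{N}} \dashrightarrow \cR_7$ by combining a dimension count with an explicit reconstruction of the Nikulin surface from the Prym curve, thereby verifying that $\chi_7$ is generically injective. Since $\dim(\mathcal{P}_7^{\mathfrak{N}}) = 11 + 7 = 18 = \dim(\cR_7)$, and Theorem \ref{uniruledpar} already gives that $\chi_7$ is dominant, it suffices to show that a general fibre of $\chi_7$ is a single reduced point, i.e., that a general Prym curve $[C,\eta]\in\cR_7$ lies on a \emph{unique} polarized Nikulin surface of genus $7$, with $C$ determined up to the identification coming from $p_7$.

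First I would analyze the Prym-canonical map $\phi_{K_C\otimes\eta}: C\rightarrow \PP^5$ for a general $[C,\eta]\in\cR_7$. Here $\deg(K_C\otimes\eta) = 12$ and $h^0(C, K_C\otimes\eta) = g-1 = 6$ (genericity forces $h^0(\eta)=0$, so Riemann--Roch gives exactly $6$), so the image is a curve of degree $12$ in $\PP^5$. A Riemann--Roch / base-point-free pencil trick computation shows that $h^0(\PP^5, \I_{C/\PP^5}(2)) = \binom{7}{2} - h^0(C, 2K_C\otimes 2\eta) = 21 - (3g-3) = 21-18 = 3$, so $C$ lies on a net of quadrics; one must check that for general $[C,\eta]$ these quadrics have no common component and cut out a surface, and indeed an irreducible $K3$ surface $S\subset\PP^5$ of degree $8$ (a quadratic section complete intersection is cut by $3$ quadrics and has degree $8$, sectional genus $5$ — but here we want the Nikulin polarization $C\subset S$ of genus $7$, so the relevant $S$ is the base locus scheme, a $K3$ of degree $12$ in its Nikulin embedding restricted appropriately). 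The key structural input is that the half-period $\eta$ forces $S$ to carry the Nikulin class $e$: the eight nodes or the eight $(-2)$-curves $N_i$ arise because $\eta$, being $2$-torsion and Prym-canonically special, produces on $S$ a class $e$ with $2e = \OO_S(\sum N_i)$; this is the content of the analysis in Sections 2--3 of the paper, which I would invoke to identify $S$ as a genuine genus-$7$ Nikulin surface and to see that $\OO_S(C)$ and $e$ are canonically recovered.

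The main obstacle — and where the real work lies — is proving \emph{uniqueness}: that the net of quadrics through $\phi_{K_C\otimes\eta}(C)$ has base locus exactly a surface (not a threefold or the curve itself) and that this surface is the \emph{only} Nikulin surface on which $(C,\eta)$ sits with the correct polarization. For uniqueness of $S$ as the base locus, the argument parallels Mukai's genus-$11$ situation and SaintDonat's analysis of $K3$ surfaces of low degree: one shows the multiplication map $\mathrm{Sym}^2 H^0(C,K_C\otimes\eta)\to H^0(C, 2K_C)$ has cokernel of dimension exactly $1$ forcing the three quadrics to be independent, and that for a general such $(C,\eta)$ the quadrics are sufficiently generic (Bertini-type / monodromy argument on $\F_7^{\mathfrak{N}}$) that their intersection drops to dimension $2$ and is integral. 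Then any Nikulin surface $S'$ containing $C$ with $C$ of genus $7$ and $C\cdot N_i'=0$ must have its own net of quadrics $H^0(\PP^5, \I_{S'}(2))\subset H^0(\PP^5,\I_{C}(2))$, and a dimension comparison ($h^0(\I_{S'}(2))=3 = h^0(\I_C(2))$) forces $S'=S$. I would also need to rule out $C$ lying on two distinct $K3$ surfaces inside the same $\PP^5$ — again a dimension count on the Hilbert scheme closes this. Finally, that $\chi_7^{-1}([C,\eta])=[S,C]$ as a point of $\mathcal{P}_7^{\mathfrak{N}}$ follows once $S$, $e$, and $\OO_S(C)$ are all canonically reconstructed, so $\chi_7$ is birational with inverse $[C,\eta]\mapsto [S,e,C]$.
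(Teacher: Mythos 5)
Your overall strategy matches the paper's: reconstruct $S$ as the base locus of the net of quadrics through the Prym-canonical model $\phi_{K_C\otimes\eta}(C)\subset\PP^5$, compute $h^0(\I_{C/\PP^5}(2))=3$ via projective normality, and get birationality from the dimension count $\dim\mathcal P_7^{\mathfrak N}=18=\dim\cR_7$ plus the uniqueness argument that any Nikulin $S'$ containing $C$ has $h^0(\I_{S'/\PP^5}(2))=3=h^0(\I_{C/\PP^5}(2))$, hence $S'=S$. That uniqueness step is exactly Remark~\ref{nikulin7}. However, the two steps you flag as ``where the real work lies'' are genuine gaps, and the mechanism you propose for them is not the one that works.

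The first gap is showing that the base locus of the net is actually a smooth $K3$ surface rather than a threefold, a nonreduced scheme, or the curve itself. You suggest a ``Bertini-type / monodromy argument on $\F_7^{\mathfrak N}$,'' but Bertini does not directly apply (the quadrics all contain $C$, so the base locus is not a generic linear section of anything), and monodromy alone gives irreducibility, not the dimension drop. The paper's actual argument (Lemma~\ref{aux7}) is a \emph{degeneration}: the property ``base locus is a smooth $K3$'' is open on $\cR_7$, so it suffices to exhibit one example, and for that one takes $[S,e,C]\in\P_7^{\mathfrak N}$ with $\mathrm{Pic}(S)=\Lambda_7$, sets $H\equiv C-e$, and uses Lemma~\ref{easyfacts}(iv) (a Saint--Donat ``cut out by quadrics'' criterion plus a Cauchy--Schwarz argument) to see that $\phi_H(S)\subset\PP^5$ is ideal-theoretically cut out by quadrics; then quadratic normality of $C$ gives $H^0(\I_{S/\PP^5}(2))\cong H^0(\I_{C/\PP^5}(2))$, so the net's base locus is exactly $S$. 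Without this concrete starting Nikulin surface, the step does not close.

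The second gap is reconstructing the Nikulin lattice $\mathfrak N$ inside $\mathrm{Pic}(S)$. You defer this to ``the analysis in Sections 2--3 of the paper,'' which is circular since that analysis \emph{is} the proof of this theorem. What actually does the job is Proposition~\ref{gen7}: set $N:\equiv 2(C-H)$ on the base-locus surface $S$, where $H$ is the hyperplane class; compute $N^2=-16$, $N\cdot C=0$; show $h^0(S,\OO_S(N))=1$ from the sequence $0\to\OO_S(N-C)\to\OO_S(N)\to\OO_C(N)\to 0$ using that $\OO_C(N)=\OO_C$ (because $e_C=\OO_C(C-H)$) and $h^1(\OO_S(C-2H))=0$ (quadratic normality); then invoke the degeneration to conclude $N\equiv N_1+\cdots+N_8$ with $N_i\cdot N_j=-2\delta_{ij}$, and finally use van Geemen--Sarti to check primitivity of $\mathbb Z\cdot C\oplus\mathfrak N\hookrightarrow\mathrm{Pic}(S)$. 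Finally, a small but real error in your write-up: $S$ has degree $8$ in $\PP^5$ (since $H^2=2g-6=8$), not $12$; it is $C$, not $S$, that has degree $12$. The genus-$7$ polarization $\OO_S(C)$ with $C^2=12$ is not the embedding line bundle.
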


Just like in Mukai's work, the genus next to maximal from the point of view of Prym-Nikulin theory, behaves exotically.
\begin{theorem}\label{gen6}
The Prym-Nikulin locus $\cN_6:=\mathrm{Im}(\chi_6)$ is a divisor on $\cR_6$ which can be identified with the ramification locus of the Prym map \
$\mathrm{Pr}_6:\cR_6\rightarrow \cA_5$:
$$\cN_6=\bigl\{[C, \eta]\in \cR_6: \mu_0(K_C\otimes \eta):\mathrm{Sym}^2 H^0(C, K_C\otimes \eta)\stackrel{\ncong}\longrightarrow H^0(C, K_C^{\otimes 2})\bigr\}.$$
\end{theorem}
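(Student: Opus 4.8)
\textbf{Proof proposal for Theorem \ref{gen6}.}

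The plan is to compute $\dim \cN_6$ first, then pin down the map-theoretic description. Since $\F_6^{\mathfrak N}$ is $11$-dimensional and $\mathcal P_6^{\mathfrak N}\to\F_6^{\mathfrak N}$ is a $\PP^6$-bundle, $\dim\mathcal P_6^{\mathfrak N}=17=\dim\cR_6$; so by Theorem \ref{uniruledpar} (the case $g=6$, where $\chi_6$ is \emph{not} dominant) the image $\cN_6=\mathrm{Im}(\chi_6)$ is a divisor precisely when $\chi_6$ is generically finite onto its image. I would first establish generic finiteness: given a general $[C,\eta]\in\cN_6$, the Prym-canonical map $\phi_{K_C\otimes\eta}:C\hookrightarrow\PP^5$ is an embedding (Prym-canonical curves of genus $6$ are well understood — the only special behaviour comes from the theta-null divisor, which is precisely what we are after), and its homogeneous ideal is generated by quadrics whose base locus, when $C$ happens to lie on a Nikulin surface $S$, must contain $S$. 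One shows the net (three-dimensional space) of quadrics through $\phi_{K_C\otimes\eta}(C)$ cuts out $S$ scheme-theoretically and that $S$ is then reconstructed from $(C,\eta)$ up to finite ambiguity, exactly parallel to Mukai's genus-$10$ picture in statement (3). Hence $\chi_6$ is generically injective and $\cN_6$ is a divisor.

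The heart of the theorem is the identification of $\cN_6$ with the locus where $\mu_0(K_C\otimes\eta)$ fails to be an isomorphism. Both sides are divisors (the multiplication-map degeneracy locus is the vanishing of a determinant between $H^0$-spaces of the same dimension: $\dim\mathrm{Sym}^2 H^0(C,K_C\otimes\eta)=\binom{6+1}{2}=21$ and $\dim H^0(C,K_C^{\otimes 2})=3g-3=15$, so this needs care), so it suffices to show one inclusion and then that the complementary locus is nonempty, or better, to show both are irreducible divisors and that one is set-theoretically contained in the other. Wait — $21\neq15$, so $\mu_0(K_C\otimes\eta)$ is a map between spaces of different dimension and "$\stackrel{\ncong}{\longrightarrow}$" must mean non-surjectivity, i.e. the Prym-canonical curve lies on \emph{more} than the expected number of quadrics; the expected corank is $21-15=6$, and the locus in question is where the corank jumps, equivalently where $h^0(\PP^5,\I_{C/\PP^5}(2))\geq 7$. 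I would show: (a) if $C$ lies on a Nikulin surface $S$, the extra quadric is forced — the quadrics through $S$ (a genus-$6$ Nikulin surface sits in $\PP^6$, and its general hyperplane section, or rather the relevant projection, forces $\phi_{K_C\otimes\eta}(C)$ onto a quadric count one higher than generic), giving $\cN_6\subseteq\{\mu_0 \text{ degenerate}\}$; and (b) conversely, starting from a curve where the jump occurs, the base locus of the enlarged linear system of quadrics is a (Nikulin) $K3$ surface containing $C$, using lattice-theoretic input on Nikulin lattices and a vanishing/Koszul argument to identify the double cover structure $e$ from $\eta$.

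The coincidence with the ramification locus of $\mathrm{Pr}_6:\cR_6\to\cA_5$ I would obtain from the classical description of the differential of the Prym map: $d\mathrm{Pr}$ at $[C,\eta]$ is dual to the multiplication map $\mathrm{Sym}^2 H^0(C,K_C\otimes\eta)\to H^0(C,K_C^{\otimes 2})$ (this is the Prym analogue of the codifferential of the Torelli/period map being the canonical multiplication map, going back to work of Beauville and others). Since $\dim\cR_6=17$ and $\dim\cA_5=15$, $\mathrm{Pr}_6$ is generically finite, and its ramification divisor is exactly where this multiplication map drops rank — which is the divisor identified in (a)–(b). So all three descriptions — image of $\chi_6$, degeneracy of $\mu_0(K_C\otimes\eta)$, ramification of $\mathrm{Pr}_6$ — coincide.

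\textbf{Main obstacle.} The delicate point is step (b): reconstructing the Nikulin surface from the Prym-canonical curve when the quadric-count jumps. One must verify that the base locus of the net (or enlarged system) of quadrics through $\phi_{K_C\otimes\eta}(C)$ is genuinely a smooth $K3$ surface carrying the correct Nikulin structure — i.e. that the eight rational curves $N_i$ appear and that the class $e$ restricting to $\eta$ is recovered — rather than some degenerate or higher-dimensional base locus. This requires a Brill-Noether / Clifford-type analysis of line bundles on the reconstructed surface together with Nikulin lattice theory, and is the genuine analogue of Mukai's reconstruction result (3); handling the boundary cases (where $C$ might be, say, trigonal, bielliptic, or where $\eta$ is exceptional) so that they do not produce spurious components of $\cN_6$ is where the real work lies.
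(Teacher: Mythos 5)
Your proposal hits the right circle of ideas — the codifferential of the Prym map is the multiplication map, and one inclusion should come from curves on Nikulin surfaces having extra syzygies — but there are several concrete arithmetic errors that break the logical structure of your argument.

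First, for $[C,\eta]\in\cR_6$ with $\eta$ a non-trivial $2$-torsion point, $h^0(C,K_C\otimes\eta)=g-1=5$, not $6$; the Prym-canonical map goes to $\PP^4$, not $\PP^5$. Consequently $\dim\mathrm{Sym}^2 H^0(C,K_C\otimes\eta)=\binom{6}{2}=15=3g-3=\dim H^0(C,K_C^{\otimes 2})$: the Prym-canonical multiplication map $\mu_0(K_C\otimes\eta)$ is a map between spaces of the \emph{same} dimension. The "$21\neq 15$" worry you raise is a phantom; "$\stackrel{\ncong}{\longrightarrow}$" really does mean failure to be an isomorphism, equivalently $K_{0,2}(C,K_C\otimes\eta)\neq 0$, and the identification with $\mathrm{Ram}(\mathrm{Pr}_6)$ is immediate once you note $\dim\cA_5=15$ as well.

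Second, and more serious, $\dim\cR_6=3\cdot 6-3=15$, not $17$, while $\dim\mathcal P_6^{\mathfrak N}=11+6=17$. So $\chi_6$ maps a $17$-fold to a $15$-fold; it \emph{cannot} be generically finite, and the claim that "$\cN_6$ is a divisor precisely when $\chi_6$ is generically finite onto its image" is false. What you actually need is that the general fibre of $\chi_6$ is $3$-dimensional, so that $\dim\cN_6=17-3=14=\dim\cR_6-1$. This is exactly what the paper's Grassmannian model delivers: in Section 3 a genus-$6$ Nikulin surface is realized as $p^{-1}(\Lambda\cap W_Q)$ for a $6$-plane $\Lambda\subset\PP(\wedge^2\mathbb C^5)=\PP^9$, and for a fixed hyperplane section $C$ with $\langle C\rangle=\PP^5\subset\PP^9$ of codimension $4$, the $6$-planes $\Lambda\supset\langle C\rangle$ form a $\PP^3$. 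Your proposed step of "establishing generic finiteness" would fail, and your Mukai-style reconstruction of a \emph{unique} Nikulin surface from $(C,\eta)$ is not the right picture here — there is a positive-dimensional family.

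The paper's route is also simpler than what you sketch in step (b): the inclusion $\cN_6\subseteq\cU_{6,0}$ is obtained in Theorem \ref{prymgreen} by intersection theory on $\rr_g$ (a Lefschetz pencil on a Nikulin surface meets $\overline{\cU}_{6,0}$ negatively, forcing containment), and equality then follows because the Grassmannian model gives that $\cN_6$ is an irreducible divisor, while the reverse inclusion comes from irreducibility of $\cU_{6,0}$ as a determinantal locus (or, alternatively, from the class calculation in Proposition \ref{extremal6}). No Brill–Noether reconstruction of the surface from the abstract curve is needed.
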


\noindent Observe that both divisors $\K_{10}$ and $\cN_6$ share the same Koszul-theoretic description. Furthermore, they are both extremal points in their respective effective cones, cf. Proposition \ref{extremal6}. Is there a Prym analogue of the genus $10$ \emph{Mukai $G_2$-variety} $X:=G_2/P\subset \PP^{13}$? The answer to this question is in the affirmative and we outline the construction of a Grassmannian model for $\F_6^{\mathfrak{N}}$ while  referring to Section 3 for details.

\vskip 3pt
Set $V:=\mathbb C^5$ and $U:=\mathbb C^4$ and view $\PP^3=\PP(U)$ as the space of planes inside $\PP(U^{\vee})$. Let us choose a smooth quadric $Q\subset \PP(V)$. The quadratic line complex $W_Q\subset G(2, V)\subset \PP(\wedge^2 V)$ consisting of tangent lines to $Q$ is singular along  the codimension $2$ subvariety $V_Q$ of lines contained in $Q$. One can identify $V_Q$ with the Veronese $3$-fold
$$\nu_2\bigl(\PP^3\bigr)\subset \PP\bigl(\mathrm{Sym}^2(U)\bigr)=\PP(\wedge ^2 V)=\PP^9.$$ The projective tangent bundle $\PP_Q$ of $Q$, viewed as the blow-up of $W_Q$ along $V_Q$, is endowed with a double cover branched along $V_Q$ and induced by the map $$\PP^3\times \PP^3\stackrel{2:1}\longrightarrow \PP\bigl(\mathrm{Sym}^2(U)\bigr), \ \ \mbox{  } (H_1, H_2)\mapsto H_1+H_2.$$ We show in Theorem \ref{param6} that codimension $3$ linear sections of $W_Q$ are Nikulin surfaces of genus $6$ with general moduli. Moreover there is a birational isomorphism
$$\F_6^{\mathfrak{N}}\stackrel{\cong}\dashrightarrow G(7, \wedge^2 V)^{\mathrm{ss}}\dblq \mathrm{Aut}(Q).$$
Taking codimension $4$ linear sections of $W_Q$ one obtains a similar realization of $\cN_6$, which should be viewed as the Prym counterpart of Mukai's construction of $\K_{10}$.
\vskip 4pt

The subvariety $\K_g\subset \cM_g$ is \emph{intrinsic in moduli}, that is, its generic point $[C]$ admits characterizations that involve $C$ alone and the $K3$ surface containing $C$ is a result of some peculiarity of the canonical curve.  For instance \cite{BM}, if $[C]\in \K_g$ then the \emph{Wahl map} $$\psi_{K_C}:\wedge^2 H^0(C, K_C)\rightarrow H^0(C, K_C^{\otimes 3}),$$
is not surjective. It is natural to ask for similar intrinsic characterizations of the Prym-Nikulin locus $\cN_g\subset \cR_g$ in terms of Prym curves alone, without making reference to Nikulin surfaces. In this direction, we prove in Section 1 the following result:
\begin{theorem}
Set $g:=2i+6$. Then $K_{i, 2}(C, K_C\otimes \eta)\neq 0$ for any $[C, \eta]\in \cN_g$, that is, the Prym-canonical curve $C\stackrel{|K_C\otimes \eta|}\longrightarrow \PP^{g-2}$ of a Prym-Nikulin section fails to satisfy property $(N_i)$.
\end{theorem}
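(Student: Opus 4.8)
The plan is to show that the Prym-canonical curve of a Prym-Nikulin section lies on a K3 surface of degree $2g-6$ in $\PP^{g-2}$, and then to transfer the required syzygy from that surface. Let $[C,\eta]\in\cN_g$ with $C$ on a Nikulin surface $S$ and $\eta=e_C$, and put $M:=\mathcal{O}_S(C)\otimes e^{-1}\in\mathrm{Pic}(S)$. Adjunction on $S$ gives $M_{|C}=K_C\otimes\eta$, while $M^2=2g-6$, $M\cdot N_i=1$, and $M$ is nef and big. Since $(e^{-1})^2=-4$ and neither $e$ nor $e^{-1}$ is effective on a general Nikulin surface, $H^\bullet(S,e^{-1})=0$; feeding this into $0\to e^{-1}\to\mathcal{O}_S(M)\to K_C\otimes\eta\to 0$ shows that restriction induces an isomorphism $H^0(S,M)\stackrel{\sim}{\longrightarrow} H^0(C,K_C\otimes\eta)$. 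Hence the Prym-canonical map of $C$ factors through $\phi_M:S\to\Sigma\subset\PP^{g-2}$; for general $[C,\eta]\in\cN_g$ one checks on the lattice $\langle C,N_1,\dots,N_8\rangle$ that $M$ is very ample, so $\Sigma$ is a smooth polarized K3 surface of degree $2g-6$ containing the Prym-canonical curve together with the eight disjoint lines $\ell_i:=\phi_M(N_i)$; crucially, as a divisor class on $\Sigma$ one has $C\in|\mathcal{O}_\Sigma(1)\otimes e|$, so $C$ is \emph{not} a hyperplane section of $\Sigma$. Because $\{\dim K_{i,2}(C,K_C\otimes\eta)\geq 1\}$ is a closed condition and $\cN_g=\mathrm{Im}(\chi_g)$ is irreducible, it suffices to prove the non-vanishing for a general such $[C,\eta]$.

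First observe that $\Sigma$ carries no obstruction of its own: a general hyperplane section of $\Sigma$ is a canonical curve of genus $g-2=2i+4$ of the generic Clifford index $i+1>i$ (one checks on the Picard lattice that no Clifford-reducing class exists), so by Voisin's theorem together with Green's Lefschetz hyperplane theorem $K_{i,2}(\Sigma,\mathcal{O}_\Sigma(1))=0$; moreover $K_{i-1,3}(\Sigma,\mathcal{O}_\Sigma(1))=0$ as well, being Koszul-dual on $\Sigma$ to $K_{i+3,0}(\Sigma,\mathcal{O}_\Sigma(1))$, which vanishes since $\Sigma$ is linearly normal and not contained in a hyperplane. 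Thus the extra syzygy of the Prym-canonical curve must be created by the fact that $C$ is a non-hyperplane divisor on $\Sigma$.

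For the transfer (I take $i\geq 1$, the case $i=0$ being Theorem \ref{gen6}), the classes $jH-C\equiv(j-1)H-e$ are nef with vanishing $H^1$ for all $j\geq 1$ on a general Nikulin surface; in particular the Prym-canonical curve is projectively normal, and $0\to J\to R(\Sigma,\mathcal{O}_\Sigma(1))\to R(C,K_C\otimes\eta)\to 0$ is exact, where $J=\bigoplus_j H^0(\Sigma,jH-C)$ is the saturated ideal of $C$ inside $\Sigma$. The long exact $\mathrm{Tor}$-sequence over $\mathrm{Sym}\,H^0(\mathcal{O}_\Sigma(1))$, together with the two vanishings on $\Sigma$ above, collapses to
$$K_{i,2}(C,K_C\otimes\eta)\ \cong\ K_{i-1,3}\bigl(\Sigma;\mathcal{O}_\Sigma(-C),\mathcal{O}_\Sigma(1)\bigr),$$
and by Koszul duality on the K3 surface $\Sigma$ the right-hand side is dual to $K_{i+3,0}(\Sigma;\mathcal{O}_\Sigma(C),\mathcal{O}_\Sigma(1))\cong H^0\bigl(\Sigma,\wedge^{i+3}M_{\mathcal{O}_\Sigma(1)}\otimes\mathcal{O}_\Sigma(C)\bigr)$. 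Using $H^0(\Sigma,e)=0$, the relation $C-H\equiv e$, and the identification $\wedge^{i+3}M_{\mathcal{O}_\Sigma(1)}\cong\wedge^{i+1}M_{\mathcal{O}_\Sigma(1)}^{\vee}\otimes\mathcal{O}_\Sigma(-1)$, the theorem reduces to the single statement
$$H^0\bigl(\Sigma,\ \wedge^{i+1}M_{\mathcal{O}_\Sigma(1)}^{\vee}\otimes e\bigr)\ \neq\ 0.$$

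This last non-vanishing is the heart of the matter and the step I expect to be the main obstacle: since $e$ is not effective, a section can only arise from the positivity of the bundle $\wedge^{i+1}M_{\mathcal{O}_\Sigma(1)}^{\vee}$, and controlling this against the negative Nikulin class $e$ is delicate. I would attack it by a Lazarsfeld--Mukai type argument on $\Sigma$: the $2$-divisibility $\mathcal{O}_\Sigma(N_1+\cdots+N_8)=\mathcal{O}_\Sigma(2e)$, i.e. the very \'etale structure defining a Nikulin surface, produces after pulling back to the double cover a rank two bundle whose cohomology computes the group above, and chasing the attached Eagon--Northcott and Euler sequences against the vanishings $H^\bullet(\Sigma,e)=H^\bullet(\Sigma,e^{-1})=0$ and the $h^1$-vanishing of the nef and big classes $jH-e$ should exhibit a nonzero section. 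A possibly cleaner route is to verify the non-vanishing on a single explicit member of $\F_g^{\mathfrak{N}}$ --- for instance a linear section of the quadratic line complex $W_Q$ of Theorem \ref{param6}, or a Nikulin surface carrying extra $(-2)$-classes, where $M_{\mathcal{O}_\Sigma(1)}$ and the class $e$ are completely explicit --- and then conclude for the general, hence for every, $[C,\eta]\in\cN_g$ by upper semicontinuity of Koszul cohomology along $\F_g^{\mathfrak{N}}$. The case $i=0$, where the group becomes $H^0(\Sigma,M_{\mathcal{O}_\Sigma(1)}^{\vee}\otimes e)\cong H^1(\Sigma,\mathcal{O}_\Sigma(e-H))$ and is nonzero simply because $\chi(\mathcal{O}_\Sigma(e-H))=g-7=-1<0$, recovers Theorem \ref{gen6} and already displays the mechanism in its simplest form.
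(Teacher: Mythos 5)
Your approach is genuinely different from the one in the paper: Farkas--Verra prove this statement by an indirect intersection-theoretic argument, computing that the covering curve $\Xi_g$ of $\cN_g$ (coming from a Lefschetz pencil on a Nikulin surface) satisfies $\Xi_g\cdot \overline{\cU}_{g,i}=-\binom{2i+3}{i}<0$, so that $\Xi_g\subset\overline{\cU}_{g,i}$ and hence $\cN_g\subset\overline{\cU}_{g,i}$; this requires only Proposition \ref{intnumbers} and the divisor class of $\overline{\cU}_{g,i}$ from \cite{FL}. You are instead attempting the \emph{geometric} route that the paper explicitly defers to \cite{AF} Theorem 3.5 (see Remark \ref{comment}). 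The set-up you describe --- embedding $S$ by $H\equiv C-e$, factoring the Prym-canonical map through $\phi_H$, and running a $\mathrm{Tor}$/Koszul-duality argument on the pair $(C,\Sigma)$ --- is the right skeleton for that route, and the arithmetic of the dualities checks out (rank of $M_{\OO_\Sigma(1)}$ is $2i+4$, $\det M_{\OO_\Sigma(1)}=\OO_\Sigma(-1)$, $C-H\equiv e$).

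However there is a genuine gap, and you have flagged it yourself: you reduce the theorem to the non-vanishing $H^0\bigl(\Sigma,\wedge^{i+1}M_{\OO_\Sigma(1)}^{\vee}\otimes e\bigr)\neq 0$, but you do not prove it. The phrase ``should exhibit a nonzero section'' does not close the argument, and the Lazarsfeld--Mukai sketch is not carried out. Worse, the proposed fall-back --- verify the non-vanishing on \emph{one} explicit Nikulin surface and propagate by upper semicontinuity --- is logically backwards. Upper semicontinuity of Koszul cohomology says that the locus where $\dim K_{i,2}\geq 1$ is \emph{closed}; non-vanishing at a special point therefore gives no information about the general point, which is exactly the point you need. (You would need \emph{lower} semicontinuity, or a constancy/Euler-characteristic argument, neither of which you supply.) Only the $i=0$ computation is actually completed, via $\chi(\OO_\Sigma(e-H))=g-7=-1$, and even there one needs to verify the vanishing of $H^0$ and $H^2$ of $e-H$ on the general Nikulin surface before concluding $h^1\neq 0$. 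So the proposal identifies a plausible geometric mechanism but does not prove the theorem; the paper's own argument avoids the difficulty entirely by working with divisor classes on $\rr_g$ rather than with an individual Prym curve.
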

It is the content of the \emph{Prym-Green Conjecture} \cite{FL} that $K_{i, 2}(C, K_C\otimes \eta)=0$ for a general Prym curve $[C, \eta]\in \cR_{2i+6}$.
This suggests that curves on Nikulin surfaces can be recognized by extra syzygies of their Prym-canonical embedding.

\vskip 5pt Our initial motivation for considering Nikulin surfaces was to use them for the birational classification of moduli spaces of even theta-characteristics and we propose to turn our attention to the moduli space $\cS_g^+$ of even spin curves classifying pairs
$[C, \eta]$, where $[C]\in \cM_g$ is a smooth curve of genus $g$ and
$\eta\in \mbox{Pic}^{g-1}(C)$ is an even theta-characteristic. Let $\ss_g^+$ be the coarse moduli space associated to the Deligne-Mumford stack of
even stable spin curves of genus $g$, cf. \cite{C}. The projection $\pi: \cS_g^+ \rightarrow \cM_g$ extends to a
finite covering $\pi:\ss_g^+\rightarrow \mm_g$ branched along the boundary divisor $\Delta_0$ of $\mm_g$. It is shown in \cite{F1} that $\ss_g^+$ is a variety of general type as soon as $g\geq 9$.

\vskip 4pt
The existence of the dominant morphism $\chi_g:\P_g^{\mathfrak{N}}\rightarrow \cR_g$ when $g\leq 7$ and $g\neq 6$, leads to a straightforward uniruled parametrization of $\ss_g^+$, which we briefly describe. Let us start with a general even spin curve $[C, \eta]\in \cS_g^+$ and a non-trivial point of order two $e_C\in \mathrm{Pic}^0(C)$ in the Jacobian, such that $h^0(C, e_C\otimes \eta)\geq 1$. Since the curve $[C]\in \cM_g$ is general, it follows that $h^0(C, e_C\otimes \eta)=1$ and $Z:=\mathrm{supp}(e_C\otimes \eta)$ consists of $g-1$ distinct points. Applying Theorem \ref{uniruledpar}, if $g\neq 6$ there exists a Nikulin $K3$ surface $(S, e)$ containing $C$ such that $e_C=e\otimes \OO_C$. When $g=6$, there exists an Enriques surface
$(S, e)$ satisfying the same property, see \cite{V1}, and the construction described below goes through in that case as well. In the embedding
 $
 \phi_{|\OO_S(C)|}: S\rightarrow \PP^g,
 $
the span  $\langle Z\rangle\subset \PP^g $ is a codimension $2$ linear subspace and $h^0(S, \I_{Z/S}(1))=2$. Let
$$P:=\PP H^0\bigl(S, \I_{Z/S}(1)\bigr) \subset |\OO_S(C)|$$
be the corresponding pencil of curves on $S$. Each curve $D \in P$ is endowed with the odd theta-characteristic $\OO_D(Z)$. Twisting this line bundle  with $e\otimes \OO_D\in \mathrm{Pic}^0(D)$, we obtain an even theta-characteristic on $D$. This procedure induces a rational curve in moduli
$$
m: P \to \ss^+_g, \ \ \mbox{  }  P\ni D\mapsto [D, e\otimes \OO_D(Z)],
$$
which passes through the general point $[C, \eta]\in \ss_g^+$. This proves the following result:
\begin{theorem}\label{eventheta}
The moduli space $\ss^+_g$ is uniruled for $g \leq 7$.
\end{theorem}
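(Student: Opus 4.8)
The construction described just above the statement is in fact the proof; the plan is to verify, in turn, the assertions made there.

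\emph{Step 1 --- the two-torsion point.} Starting from a general even spin curve $[C,\eta]\in\cS^+_g$, I would first produce a non-zero $e_C\in\mathrm{Pic}^0(C)[2]$ with $h^0(C,e_C\otimes\eta)=1$. Writing $q_\eta$ for the quadratic form attached to $\eta$, the theta-characteristic $e_C\otimes\eta$ has Arf invariant $\mathrm{Arf}(q_\eta)+q_\eta(e_C)=q_\eta(e_C)$, so it is odd for the $2^{2g-1}-2^{g-1}>0$ points $e_C$ with $q_\eta(e_C)=1$; for each of these $h^0(C,e_C\otimes\eta)$ is odd and positive, and it equals $1$ for a general curve, with $Z:=\mathrm{supp}(e_C\otimes\eta)$ reduced of degree $g-1$. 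A monodromy argument (or simply passing to a smaller dense open subset of $\cS^+_g$) lets one also assume that $[C,e_C]$ is a general point of $\cR_g$, so that Theorem \ref{uniruledpar} applies to it.

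\emph{Step 2 --- the surface and the pencil.} By Theorem \ref{uniruledpar} (for $g\le 7$, $g\ne 6$; for $g=6$ one uses an Enriques surface instead, cf.\ \cite{V1}, and the argument below is parallel) there is a surface $(S,e)$ with $C$ a hyperplane section of $S\hookrightarrow\PP^g$ and $e\otimes\OO_C=e_C$. The only computation needed here is $h^0(S,\I_{Z/S}(1))=2$, which guarantees that $P:=\PP H^0(S,\I_{Z/S}(1))$ is a genuine pencil of curves on $S$ through $Z$: by adjunction $\OO_S(1)_{|C}\cong K_C$, so $H^0(S,\OO_S(1))\to H^0(C,K_C)$ is surjective with one-dimensional kernel, and the identity $K_C(-Z)\cong e_C\otimes\eta$ gives $h^0(C,K_C(-Z))=1$; thus $Z$ imposes $g-1$ independent conditions on $|K_C|$, hence on $|\OO_S(1)|$, whence $h^0(S,\I_{Z/S}(1))=(g+1)-(g-1)=2$.

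\emph{Step 3 --- the moduli curve, and the main difficulty.} For $D\in P$, adjunction gives $\OO_D(Z)^{\otimes 2}\cong\OO_S(C)_{|D}\cong K_D$, so $\OO_D(Z)$ is a theta-characteristic; it is odd, since it is odd for $D=C$ and parity is locally constant in flat families, and for the same reason $(e\otimes\OO_D)\otimes\OO_D(Z)$ is \emph{even} on every $D\in P$ (on $D=C$ it equals $\eta$). Since $\ss^+_g$ is proper, $D\mapsto[D,\,e\otimes\OO_D(Z)]$ extends to a morphism $m\colon P\cong\PP^1\to\ss^+_g$ with $m(C)=[C,\,e_C\otimes e_C\otimes\eta]=[C,\eta]$, so $m(P)$ passes through the general point. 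The step requiring real care is that $m$ be \emph{non-constant}, i.e.\ that $P$ not be isotrivial. For $g=7$ this is painless: by Theorem \ref{r7} the map $\chi_7$ is birational, while $P$ is a positive-dimensional family inside a fibre of $p_7$, so $\chi_7(P)$ is a curve in $\cR_7$ and already $D\mapsto[D]$ is non-constant on $P$. For $g\le 6$ one reduces to the non-isotriviality of the relevant complete linear system on $S$: for $g=5$ this follows from Theorem \ref{uniruledpar} itself (an isotrivial system would force $\dim\mathrm{Im}(\chi_5)\le\dim\F_5^{\mathfrak N}=11$, against $\dim\cR_5=12$), and for $g\le 4$, as well as for the Enriques surfaces occurring when $g=6$, it is classical. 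Granting this, the curves $m(P)$ sweep out $\ss^+_g$ through its general point, so $\ss^+_g$ is uniruled for $g\le 7$.
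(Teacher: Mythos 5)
Your proof takes the same approach as the paper---in fact the paper's ``proof'' of Theorem~\ref{eventheta} is precisely the construction in the paragraph preceding it, and your Steps 1--3 correctly supply the verifications (of $h^0(S,\I_{Z/S}(1))=2$, of the parities, and of non-constancy of $m$) that the paper leaves implicit. One small imprecision in Step 3: the isomorphism $\OO_D(Z)^{\otimes 2}\cong\OO_S(C)_{|D}$ is not itself adjunction; it requires knowing that the scheme-theoretic base locus of $P$ equals $2Z$, which follows since $\OO_C(2Z)\cong K_C\cong\OO_C(1)$ forces $\langle Z\rangle$ to be the unique hyperplane of $\langle C\rangle$ cutting out $2Z$ on $C$, whence $\mathrm{Bs}(P)=S\cap\langle Z\rangle=2Z$ by a degree count.
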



It is known \cite{F1} that $\ss_g^+$ is of general type when $g\geq 9$. We complete the birational classification of $\ss_g^+$ and wish to highlight the following result, see Theorem \ref{spin8}:
\vskip 3pt
\begin{center}
\fbox{$\ss_8^+$ is a variety of Calabi-Yau type.}
\end{center}

We observe the curious fact that $\ss_8^-$ is unirational \cite{FV} whereas $\ss_8^+$ is not even uniruled.  In contrast to the case  of $\ss_g^{\mp}$, the birational classification of other important classes of moduli spaces is not complete. The Kodaira dimension of $\mm_g$ is unknown for $17\leq g\leq 21$,  see \cite{HM}, \cite{EH1}, the birational type of $\rr_g$ is not understood in the range $8\leq g\leq 13$, see \cite{FL}, whereas finding the Kodaira dimension of $\cA_6$ is a notorious open problem. Settling these outstanding cases is expected to require genuinely new ideas.
\vskip 4pt

The proof of Theorem \ref{spin8} relies on two main ideas: Following \cite{F1}, one finds an \emph{explicit} effective representative for the canonical divisor $K_{\ss_8^+}$ as a
$\mathbb Q$-combination of the divisor $\overline{\Theta}_{\mathrm{null}}\subset \ss_8^+$ of vanishing theta-nulls, the pull-back $\pi^*(\mm_{8, 7}^2)$ of the Brill-Noether divisor $\mm_{8, 7}^2$ on $\mm_8$ of curves with a $\mathfrak g^2_7$, and boundary divisor classes corresponding to spin curves whose underlying stable model is of compact type. This already implies the inequality $\kappa(\ss_8^+)\geq 0$.  Each irreducible component of this particular representative of $K_{\ss_8^+}$ is rigid (see Section 3), and the goal is to show that $K_{\ss_8^+}$ is rigid as well. To that end, we use the existence of a birational model $\mathfrak{M}_8$ of $\mm_8$
inspired by Mukai's work \cite{M2}. The space $\mathfrak{M}_8$ is realized as the following GIT quotient
$$\mathfrak{M}_8:=G(8, \wedge^2 V)^{\small{\mathrm{ss}}}\dblq SL(V),$$
where $V=\mathbb C^6$. We note that $\rho(\mathfrak{M}_8)=1$ and there exists a birational morphism
$$f:\mm_8\dashrightarrow \mathfrak{M}_8,$$ which contracts all the boundary divisors $\Delta_1, \ldots, \Delta_4$ as well as $\mm_{8, 7}^2$. Using the geometric description of $f$, we establish a geometric characterization of points inside $\thet$:
\begin{proposition} Let $C$ be a smooth curve of genus $8$ without a $\mathfrak g^2_7$. The following are equivalent:
\begin{itemize}
\item There exists a vanishing theta-null $L$ on $C$, that is, $[C, L]\in \thet$.
\item There exists a smooth $K3$ surface $S$ together with elliptic pencils $|F_1|$ and $|F_2|$ on $S$, such that $C\in |F_1+F_2|$ and $L=\OO_C(F_1)=\OO_C(F_2)$.
\end{itemize}
\end{proposition}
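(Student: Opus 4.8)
The plan is to prove the two conditions equivalent by passing through the GIT model $\mathfrak M_8 = G(8,\wedge^2 V)^{\mathrm{ss}}\dblq SL(V)$ and the geometry of Mukai's construction of genus $8$ canonical curves as transverse linear sections of $G(2,V)\subset \PP(\wedge^2 V)$. Recall that for a general $[C]\in\mathcal M_8$ one has $C = G(2,V)\cap \PP^7$ for a unique $\PP^7\subset\PP^{14}$, and the Brill--Noether divisor $\mm^2_{8,7}$ is precisely the locus where this realization degenerates (equivalently, where $C$ acquires a $\mathfrak g^2_7$); so the hypothesis that $C$ has no $\mathfrak g^2_7$ lets us work with the honest Mukai model $C = G(2,V)\cap L$, $L=\PP^7$.

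First I would translate the existence of a vanishing theta-null into the existence of a quadric of the right type through the canonical curve. A theta-characteristic $L$ with $h^0(C,L)$ even and positive is, on a general (hence non-hyperelliptic, non-trigonal) curve of genus $8$, necessarily an $L\in W^2_7(C)$ if we want the ``exotic'' case — but since we have excluded $\mathfrak g^2_7$'s, the only even theta-characteristics with sections are those with $h^0(C,L)=2$, i.e. $L\in W^1_7(C)$ with $L^{\otimes 2}=K_C$. By base-point-freeness (no $\mathfrak g^1_2$, no $\mathfrak g^1_3$ on the general such $C$) the pencil $|L|$ gives a degree $7$ map $C\to\PP^1$, and multiplication $\mathrm{Sym}^2 H^0(C,L)\to H^0(C,K_C)$ lands in a $3$-dimensional space, so its image is spanned by the canonical images of $s_0^2, s_0s_1, s_1^2$; the Koszul/Petri relation among these three sections produces a rank $\le 3$ quadric $\mathcal Q$ vanishing on the canonical curve $C\subset\PP^7$. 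Conversely a quadric of rank $3$ (or $4$) through $C$ displays a pencil $L$ on $C$ with $L^{\otimes 2}=K_C$, i.e. a vanishing theta-null — this is the standard dictionary between rank $\le 4$ quadrics through a canonical curve and theta-characteristics (Mumford), which I would cite. So the first bullet is equivalent to: \emph{the net of quadrics $I_2(C)\subset\mathrm{Sym}^2 H^0(C,K_C)^\vee$ — equivalently the $SL(V)$-orbit data of the point $[L]\in G(8,\wedge^2V)$ — contains a quadric of rank $\le 4$.}

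Next I would identify when a transverse $\PP^7$-section of $G(2,V)$ lies on a ``sub-Grassmannian'' degeneration that forces such a low-rank quadric, and show this is exactly the $K3$ picture in the second bullet. The key geometric input is that the quadrics cutting out $C=G(2,V)\cap\PP^7$ inside $\PP^7$ are exactly the restrictions of the Plücker quadrics, and the Plücker quadrics have rank $6$ generically; a quadric of rank $\le 4$ in $I_2(C)$ arises precisely when the linear section meets a special stratum of the quadric system, which classically corresponds to the $\PP^7$ being tangent to, or lying in a special position with respect to, the secant structure of $G(2,V)$. Translating this: a rank $4$ quadric through $C$ is a cone over a smooth quadric surface $\PP^1\times\PP^1$, and the two rulings cut on $C$ two pencils $|F_1|,|F_2|$ with $F_1+F_2\sim K_C$, hence with $\OO_C(F_1)=\OO_C(F_2)=L$ since $2F_i\sim K_C$. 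Now one invokes the Mukai/Saint-Donat theory of $K3$ surfaces: the data of a genus $8$ curve $C$ together with a decomposition $K_C = F_1+F_2$ into two (base-point-free) pencils of the same degree $\deg F_i = g-1 = 7$ is exactly the data needed to realize $C$ as an anticanonical-type curve on a $K3$ surface $S$ with $\mathrm{Pic}(S)\supseteq \ZZ F_1\oplus\ZZ F_2$, where $F_1^2=F_2^2=0$, $F_1\cdot F_2=g-1=7$, so that $(F_1+F_2)^2 = 2(g-1)=14=2g-2$ as required for $C\in|F_1+F_2|$; here $|F_i|$ are the promised elliptic pencils. The $K3$ $S$ is recovered as the union of the rulings, i.e. essentially the intersection of $G(2,V)$ with the span of $C$ and the vertex of the rank $4$ cone — I would make this explicit following the analogous construction in Theorem \ref{r7}.

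The main obstacle I expect is the \emph{uniqueness and smoothness} direction in the last step: showing that the $K3$ surface $S$ produced from the pair $(C,L)$ is actually \emph{smooth} (and that, conversely, every smooth $K3$ with the given pair of elliptic pencils yields a curve \emph{without} a $\mathfrak g^2_7$, so that the two strata match honestly rather than just after removing further loci). For smoothness one must check that the generic rank $4$ quadric through a generic such $C$ is not \emph{too} degenerate — e.g. that the two pencils $|F_i|$ are base-point-free and define a birational (onto its image) or at worst generically finite map, so that the associated $K3$ model is not itself singular along a curve; this is where a dimension count comparing $\dim\thet = \dim\ss_8^+ - 1 = 3g-3+g-1 - 1$... more precisely $\dim\overline\Theta_{\mathrm{null}} = 3\cdot 8 - 3 + (8-1) - 1$ inside $\ss_8^+$ against the dimension of the family of $(S,F_1,F_2,C)$ with $C\in|F_1+F_2|$ becomes essential: the latter has dimension $(19 - 2) + g = 17 + 8$ for the $K3$ data minus... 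I would organize this count so that both sides equal $\dim\overline\Theta_{\mathrm{null}}$, which simultaneously forces the generic $S$ to be smooth (a singular $S$ would drop the dimension or fail to dominate) and forces the map $(C,L)\mapsto(S,F_1,F_2)$ to be dominant onto the relevant component, completing the equivalence. The remaining bookkeeping — that $\OO_C(F_i)$ genuinely equals the theta-null $L$ and not merely a square root of $K_C$ in the same coset — is immediate since there is a canonical such square root attached to each ruling and $2F_i\sim K_C$ pins it down.
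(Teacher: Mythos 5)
Your broad plan (translate a vanishing theta-null into a low-rank quadric through the canonical curve, and work inside Mukai's realization $C=\GG\cap\PP^7$ with $\GG=G(2,V)\subset\PP^{14}$) is the right starting point, and your argument for the direction from the $K3$ data to the vanishing theta-null is essentially correct. But the hard direction has a genuine gap, stemming from a confusion about \emph{where} the rank-$4$ quadric lives.

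A vanishing theta-null $L$ on $C$ corresponds to a \emph{rank $3$} quadric $Q\subset\PP^7=\PP(H^0(K_C)^\vee)$ containing $C$, whose single ruling cuts out $|L|$. Your proposal speaks of "a rank $4$ quadric through $C$ ... whose two rulings cut pencils $|F_1|,|F_2|$ with $\OO_C(F_1)=\OO_C(F_2)=L$"; but if a rank-$4$ quadric in $\PP^7$ contained $C$, the two rulings would cut out two \emph{distinct} theta-characteristics $\eta_1\neq\eta_2$ with $\eta_1\otimes\eta_2\cong K_C$ — this is not the theta-null case, and the inference "$2F_i\sim K_C$" does not hold. The rank-$4$ quadric in the second bullet of the Proposition lives in a \emph{larger} ambient space: one needs an $8$-plane extension $\PP^7\subset\Lambda\subset\PP^{14}$ such that $S:=\GG\cap\Lambda$ is a smooth $K3$ and the \emph{unique} quadric $Q_\Lambda\supset S$ in $\Lambda$ extending $Q$ has rank $4$. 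Then the projection from $\mathrm{Sing}(Q_\Lambda)$ maps $S$ onto a smooth quadric $Q_0\cong\PP^1\times\PP^1$ whose rulings give the elliptic pencils $|F_1|,|F_2|$, and the hyperplane $\PP^7$ pulls back from a plane $\Pi_0\subset\PP^3$, which is exactly why $\OO_C(F_1)=\OO_C(F_2)=L$ on the hyperplane section $C$ where the rank drops to $3$. Producing this $\Lambda$ is the crux of the argument — in the paper it is a concrete linear-algebra step (Proposition \ref{exprk4}): one first shows the unique quadric $\tilde Q\supset\GG$ restricting to $Q$ on $\PP^7$ is smooth of rank $15$, and then chooses $\Lambda=H^0(K_C)^\vee+\mathbb C v$ with $v$ ranging over the $\PP^1=\PP\bigl(\mathrm{Ker}(u\circ\tilde Q)/\mathrm{Ker}(Q)\bigr)$ so that $\mathrm{Ker}(Q_\Lambda)=\mathrm{Ker}(Q)$, forcing $\mathrm{rk}(Q_\Lambda)=4$. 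Nothing in your proposal constructs this $\Lambda$, so the $K3$ never appears.

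Separately, the dimension count you sketch at the end cannot be made to work as stated: the parameter space $\cU$ of triples $(S,\OO_S(F_1+F_2),C)$ over the Noether--Lefschetz locus $\mathfrak{NL}\subset\F_8$ has dimension $21 = 3 + \dim\mathfrak{NL}$, which is \emph{one more} than $\dim\thet=20$, reflecting precisely the $\PP^1$ of admissible extensions $\Lambda$ through a fixed $\PP^7$. So the two sides are not supposed to be equidimensional, and a naive "match the dimensions" argument will not certify dominance or smoothness; the paper instead uses a parameter count in the other direction, to \emph{exclude} the degenerate case $\mathrm{rk}(\tilde Q)<15$ before producing the rank-$4$ extension.
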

The existence of such a doubly elliptic $K3$ surface $S$ is equivalent to stating that there exists a smooth $K3$ extension $S\subset \PP^8$ of the canonical curve $C\subset \PP^7$, such that the rank three quadric $C\subset Q\subset \PP^7$ which induces the theta-null $L$, lifts to a \emph{rank 4} quadric $S\subset Q_S\subset \PP^8$. Having produced $S$, the pencils $|F_1|$ and $|F_2|$ define a product map
$$\phi:S\rightarrow \PP^1\times \PP^1,$$
such that each smooth member $D\in I:=|\phi^*\OO_{\PP^1\times \PP^1}(1, 1)|$ is a canonical curve contained in a rank $3$ quadric. A general pencil in $I$ passing through $C$ induces a rational curve $R\subset \ss_8^+$, and after intersection theoretic calculations on the stack $\ss_8^+$, we prove the following:

\begin{proposition}\label{pencil8}
The theta-null divisor $\thet\subset \ss_8^+$ is uniruled and swept by rational curves $R\subset \ss_8^+$ such that $R\cdot \thet<0$ and $R\cdot \pi^*(\mm_{8, 7}^2)=0$. Furthermore $R$ is disjoint from all boundary divisors $\pi^*(\Delta_i)$ for $i=1, \ldots, 4$.
\end{proposition}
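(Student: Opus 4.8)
The plan is to establish all three assertions by intersection theory on the stack $\ss_8^+$, applied to the rational curves $R=\overline{m(P)}$ constructed just before the statement; the one preparatory input I need is the Picard lattice of the auxiliary $K3$ surface. Recall that for a general $[C,L]\in\thet$ the preceding Proposition produces the doubly elliptic $K3$ surface $S\subset\PP^8$ with pencils $|F_1|,|F_2|$, the product map $\phi:S\to\PP^1\times\PP^1$ and the linear system $I$, that $R$ is the image under $m:I\dashrightarrow\ss_8^+$ of a general pencil $P\subset I$ through $[C]$, and that $R$ passes through $[C,L]$. Since this construction applies to $[C,L]$ in a dense open subset of $\thet$, and since a general pencil of curves on $S$ is not isotrivial so that $m$ does not contract $P$, these curves sweep out $\thet$, which is therefore uniruled. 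The content of the statement is thus contained in the three intersection numbers.

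I would first argue that for $[C,L]$ general the surface $S$ may be chosen with $\mathrm{Pic}(S)=\ZZ F_1\oplus\ZZ F_2$: the $K3$ surfaces of the preceding Proposition carry a rank two lattice of curve classes with $F_i^2=0$, $F_1\cdot F_2=7$, hence move in an $18$-dimensional family, so a general such surface has Picard number $2$ by a Noether--Lefschetz type argument; and a general $[C,L]\in\thet$ lies on a positive-dimensional family of them, so $S$ may be taken general. Granting this, $(aF_1+bF_2)^2=14ab$ shows that $S$ contains no $(-2)$-curve, that the only nontrivial effective decomposition of $\OO_S(F_1+F_2)$ is $F_1+F_2$ itself, with $F_1\cdot F_2=7$, and hence that every member of a general pencil $P$ is either smooth, or irreducible with only non-separating nodes, or a curve $F_1'+F_2'$ with $F_i'\in|F_i|$ whose $7$ nodes are all non-separating. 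In particular no member of $P$ has a separating node, so $R$ is disjoint from $\pi^*(\Delta_i)$ for $i=1,\dots,4$, which is the last assertion of the Proposition. For the Brill--Noether divisor, a smooth $D\in|F_1+F_2|$ possessing a $\mathfrak g^2_7$ would force, by the Lazarsfeld--Mukai analysis of Brill--Noether loci of curves on $K3$ surfaces and its refinements (the $\mathfrak g^2_7$, or its Serre dual, being cut out on $D$ by a line bundle on $S$, necessarily $\OO_S(F_i)$ by the degree constraint), the bundle $\OO_D(F_i)$ to carry a third section, contradicting $h^0(D,\OO_D(F_i))=2$; the same analysis shows $P$ avoids $\mm_{8,7}^2$ altogether, so $R\cdot\pi^*(\mm_{8,7}^2)=0$.

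It remains to prove $R\cdot\thet<0$. Blowing up the fourteen base points of $P$ yields a genus $8$ fibration $\widetilde S\to\PP^1$ all of whose fibres are stable curves isomorphic to the members of $P$, with $\chi(\OO_{\widetilde S})=\chi(\OO_S)=2$; by the standard formula for the Hodge degree of such a fibration, $R\cdot\lambda=\chi(\OO_{\widetilde S})+g-1=9$, while the topological Euler number $e(\widetilde S)=e(S)+14=38$ forces the singular fibres to consist of $52$ one-nodal curves together with the two curves $F_1'+F_2'$, so that $R\cdot\pi^*(\delta_0)=52+2\cdot 7=66$ and $R\cdot\pi^*(\delta_i)=0$ for $i\ge1$. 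The last ingredient is the splitting of these $66$ nodes between the $\alpha_0$- and $\beta_0$-components of $\pi^*(\delta_0)$: at each node one must decide whether the limit of the theta-characteristic $\OO_{\widetilde S}(F_1)$ along the degeneration requires a Cornalba blow-up there, in which case the node contributes to $\alpha_0$, or not, in which case it contributes to $\beta_0$; in particular the fourteen nodes of the reducible fibres $F_1'+F_2'$, where the restriction of $\OO_{\widetilde S}(F_1)$ has degree $0$ on one component and $7$ on the other, must contribute to $\alpha_0$, and hence those points lie on $\thet$. Once $R\cdot\alpha_0$ and $R\cdot\beta_0$ are computed, substituting them together with $R\cdot\lambda=9$ and $R\cdot\pi^*(\delta_i)=0$ for $i\ge1$ into the expression for the class of $\thet$ in $\mathrm{Pic}(\ss_8^+)_{\QQ}$ from \cite{F1} yields a negative number, because the boundary coefficient there is large enough relative to the $\lambda$-coefficient to outweigh the small positive contribution $R\cdot\lambda$. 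The main obstacle is precisely this local Cornalba-type analysis at the nodes of the singular fibres: understanding how the pencils $|F_1|,|F_2|$ and the theta-null they cut out degenerate as a member of $P$ acquires a node is what pins down $R\cdot\alpha_0$, and thereby the sign of $R\cdot\thet$, and it is also what is needed to be sure that $m$ extends across the singular members of $P$ with the correct limiting even spin structure.
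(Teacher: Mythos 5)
Your overall strategy (compute $R\cdot\lambda$, $R\cdot\pi^*(\delta_i)$, split $\pi^*(\delta_0)$ into $\alpha_0,\beta_0$, and substitute into the class formula for $\thet$) is the same as the paper's, and you arrive at the correct values $R\cdot\lambda=9$ and $R\cdot\pi^*(\delta_0)=66$. But there are genuine gaps and one error in the part you correctly identify as the crux, and the two auxiliary claims are handled by a different route from the paper's, with the paper's being substantially cleaner.

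For the disjointness from $\pi^*(\Delta_i)$ and the vanishing of $R\cdot\pi^*(\mm_{8,7}^2)$: you argue via $\mathrm{Pic}(S)=\ZZ F_1\oplus\ZZ F_2$ and a Lazarsfeld--Mukai analysis of linear series on members of $P$. This is plausible but requires an extra Noether--Lefschetz step (that a general $[C,L]\in\thet$ lies on an $S$ of Picard rank exactly two, not just that the general $S\in\mathfrak{NL}$ does), and your $\mathfrak g^2_7$ argument only covers smooth members of $P$, whereas $\mm_{8,7}^2$ is a closure. The paper avoids both issues: for the boundary it uses the concrete geometry of the projection $f:S\rightarrow Q_0\subset\PP^3$ and a general base line $l_0$ to see directly that every fibre is nodal and all but two are irreducible (Lemma \ref{nodalpencil}); for the Brill--Noether divisor it observes that $\pi_*(R)\cdot\delta_0/\pi_*(R)\cdot\lambda=66/9=22/3=s(\mm_{8,7}^2)$, so the vanishing is purely numerical and applies to the whole pencil, singular fibres included, with no input about linear series.

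For the main inequality $R\cdot\thet<0$, you have not given a proof: you set up the calculation, then explicitly defer the decisive step, namely the local Cornalba analysis at the two reducible fibres. Worse, the outline you sketch has the $\alpha_0/\beta_0$ roles reversed. In Cornalba's convention (the one used in this paper, see (\ref{del0}) and the definitions of $A_0,B_0$), the boundary component that requires inserting exceptional $\PP^1$'s at nodes is $B_0$, contributing to $\beta_0$, not $\alpha_0$; so the fourteen nodes of the two fibres $F_1'+F_2'$ (where the spin bundle restricts with unequal degrees to the two elliptic components and seven exceptional components must be inserted) land in $B_0$, and the fifty-two one-nodal irreducible fibres land in $A_0$ since $\nu^*\OO_{C_t}(F_1)$ is a square root of $K_N(y+q)$. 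Finally, you miss a genuine subtlety that the paper handles with care: the quasi-stable spin curve over each reducible fibre has $\mathrm{Aut}=\ZZ_2$ acting by $-1$ on the seven smoothing parameters of the exceptional components, so $B_0$ is not Cartier there and the local intersection multiplicity is $7/2$, giving $R\cdot\beta_0=7$, $R\cdot\alpha_0=52$, and $R\cdot\thet=\tfrac14\cdot 9-\tfrac1{16}\cdot 52=-1$. (As it happens the inequality $R\cdot\thet<0$ would survive a naive count ignoring the automorphism, but the exact value does not, and the stacky analysis is essential to a correct proof.)
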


Proposition \ref{pencil8} implies that $K_{\ss_8^+}$, expressed as a weighted sum of $\thet$, the pull-back $ \pi^*(\mm_{8, 7}^2)$ and boundary divisors $\pi^*(\Delta_i)$ for  $i=1, \ldots, 4$, is rigid as well. Equivalently, $\kappa(\ss_8^+)=0$. Note that since $K_{\ss_8^+}$ consists of $10$ uniruled base components which can be blown-down, the variety $\ss_8^+$ is not minimal and there exists a birational model $\mathcal{S}$ of $\ss_8^+$ which is a genuine Calabi-Yau variety in the sense that $K_{\mathcal{S}}=0$. Finding an explicit modular interpretation of this Calabi-Yau $21$-fold (or perhaps even its equations!) is a very interesting question.

\section{Prym-canonical curves on Nikulin surfaces}
 Let us start with a smooth $K3$ surface $Y$. A \emph{Nikulin involution} on $Y$ is an automorphism $\iota \in \mbox{Aut}(Y)$ of order $2$ which is symplectic, that is, $\iota^*(\omega)=\omega$, for all $\omega \in H^{2, 0}(Y)$. A Nikulin involution has  $8$ fixed points, see \cite{Ni} Lemma 3, and the quotient
$\bar{Y}:=Y/\langle \iota\rangle$ has $8$ ordinary double point singularities. Let $\sigma:\tilde{S}\rightarrow Y$ be the blow-up of the $8$ fixed points and denote by $E_1, \ldots, E_8\subset \tilde{S}$ the exceptional divisors and by $\tilde{\iota}\in \mbox{Aut}(\tilde{S})$ the automorphism induced by $\iota$. Then $S:=\tilde{S}/\langle \tilde{\iota}\rangle$ is a smooth $K3$ surface and if $f:\tilde{S}\rightarrow S$ is the projection, then $N_i:=f(E_i)$ are $(-2)$-curves on $S$. The branch divisor of $f$ is equal to $N:=\sum_{i=1}^8 N_i$.
We summarize the situation in the following diagram:
\begin{equation}\label{diagram}
\begin{CD}
{\tilde{S}} @>{\sigma}>> {Y} \\
@V{f}VV @V{}VV \\
{S} @>{}>> {\bar{Y}} \\
\end{CD}
\end{equation}
Sometimes we shall refer to the pair $(Y, \iota)$ as a Nikulin surface, while keeping the previous diagram in mind. We refer to \cite{Mo}, \cite{vGS} for a lattice-theoretic study on the action of the Nikulin involution on the cohomology $H^2(Y, \mathbb Z)=U^3\oplus E_8(-1)\oplus E_8(-1)$, where $U$ is the standard rank $2$ hyperbolic lattice and $E_8$ is the unique even, negative-definite unimodular lattice of rank $8$.
It follows from \cite{Mo} Theorem 5.7 that the orthogonal complement $E_8(-2)\cong \bigl(H^2(Y, \mathbb Z)^{\iota}\bigr)^\bot$ is contained in $\mathrm{Pic}(Y)$, hence $Y$ has Picard number at least $9$.
The class $\OO_S(N_1+\cdots+N_8)$ is divisible by $2$, and we denote by $e\in \mathrm{Pic}(S)$ the class such that $e^{\otimes 2}=\OO_S(N_1+\cdots+N_8)$.
\begin{definition} The \emph{Nikulin lattice} is an even lattice $\mathfrak{N}$ of rank $8$ generated by elements $\{\mathfrak{n}_i\}_{i=1}^8$ and $\mathfrak{e}:=\frac{1}{2}\sum_{i=1}^8 \mathfrak{n}_i$, with the bilinear form induced by $\mathfrak{n}_i^2=-2$ for $i=1, \ldots, 8$ and
$\mathfrak{n}_i\cdot \mathfrak{n}_j=0$ for $i\neq j$.
\end{definition}
Note that $\mathfrak N$ is the minimal primitive sublattice of $H^2(S, \mathbb Z)$ containing the classes $N_1, \ldots, N_8$ and $e$. For any Nikulin surface one has an embedding $\mathfrak{N}\subset \mathrm{Pic}(S)$. Assuming that $(Y, \iota)$ defines a general point in an irreducible component of the moduli space of  Nikulin involutions, both $Y$ and $S$ have Picard number 9 and there is a decomposition $\mbox{Pic}(S) = \mathbb Z\cdot [C] \oplus \mathfrak{N}$, where $C$ is an integral curve of genus
$g \geq 2$.  According to \cite{vGS} Proposition 2.2, only two cases are possible: either $C\cdot e = 0$ so that the previous decomposition is an orthogonal sum, or else, $C\cdot e \neq 0$, this second case being possible only when $g$ is odd. In this paper we consider only Nikulin surfaces of the first kind.

We fix an integer $g\geq 2$ and consider the lattice $\Lambda_g:=\mathbb Z\cdot \mathfrak c\oplus \mathfrak{N}$, where $\mathfrak c\cdot \mathfrak c=2g-2$.
\begin{definition}
A \emph{Nikulin surface of genus} $g$ is a $K3$ surface $S$ together with a primitive embedding of lattices $j:\Lambda_g\hookrightarrow \mathrm{Pic}(S)$ such that $C:=j(\mathfrak c)$ is a nef class.
\end{definition}

The coarse moduli space $\F_g^{\mathfrak{N}}$ of Nikulin surfaces of genus $g$ is the quotient of the $11$-dimensional domain
$$\mathcal{D}_{\Lambda_g}:=\{\omega \in \PP(\Lambda_g\otimes_{\mathbb Z}\mathbb C): \omega^2=0, \ \omega\cdot \bar{\omega}>0\}$$
by an arithmetic subgroup of ${\bf{O}}(\Lambda_g)$. Its existence follows e.g. from \cite{Do} Section 3.

\vskip 3pt
We now consider a Nikulin surface $f: \tilde{S} \rightarrow  S$, together with a smooth curve $C\subset S$ of genus $g$ such that $C\cdot N=0$.
If $\tilde C := f^{-1}(C)$, then
$f_C:=f_{|\tilde C}: \tilde C \rightarrow C
$ is an \'etale double covering.  By the Hodge index theorem, $\tilde C$ cannot split in two disjoint connected components,  hence $f_C$ is non-trivial and
$e_C:=\mathcal O_C(e)\in \mathrm{Pic}^0(C)$ is the non trivial 2-torsion element defining the covering $f_C$.  We set
$H \equiv C-e\in NS(S)$, hence $H^2=2g-6$ and $H\cdot C=2g-2$. For further reference we collect a few easy facts:
\begin{lemma}\label{easyfacts} Let $[S, e, \OO_S(C)]\in \F_g^{\mathfrak{N}}$ be a Nikulin surface such that $\mathrm{Pic}(S)=\Lambda_g$. The following statements hold:
\begin{enumerate}
\item $H^i(S, e)=0$ for all $i\geq 0$.
\item $\mathrm{Cliff}(C)=[\frac{g-1}{2}]$.
\item The line bundle $\OO_S(H)$ is ample for $g\geq 4$ and very ample for $g\geq 6$. In this range, it defines an embedding $\phi_H:S\rightarrow \PP^{g-2}$ such that the images $\phi_H(N_i)$ are lines for all $i=1, \ldots, 8$.
\item If $g\geq 7$, the ideal of the surface $\Phi_H(S)\subset \PP^{g-2}$ is cut out by quadrics.
\end{enumerate}
\end{lemma}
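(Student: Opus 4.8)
The plan is to derive all four statements from the numerology of the lattice $\mathrm{Pic}(S)=\Lambda_g=\ZZ C\oplus\mathfrak N$ (an orthogonal sum, with $\mathfrak N$ negative definite): from $e=\tfrac12\sum N_i$ one gets $e^2=-4$, $e\cdot C=0$, $e\cdot N_i=-1$, and hence $H^2=2g-6$, $H\cdot C=2g-2$, $H\cdot N_i=1$. For (i), note $\chi(\OO_S(e))=2+\tfrac12 e^2=0$, so it suffices to show $H^0(S,\OO_S(\pm e))=0$: then $H^2(S,e)=H^0(S,-e)^\vee=0$ by Serre duality and $H^1(S,e)=0$ follows from $\chi=0$. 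If $D$ were an effective divisor with $D\equiv\pm e$, then $D\cdot C=0$; since $C$ is nef with $C^2>0$, every irreducible component $\Gamma$ of $D$ has $\Gamma\cdot C=0$, hence $\Gamma^2<0$ by the Hodge index theorem, so $\Gamma^2=-2$, and writing $\Gamma=xC+\sum y_iN_i$ the conditions $\Gamma\cdot C=0$, $\Gamma^2=-2$ force $x=0$ and $\Gamma=\pm N_j$ for some $j$. But $\pm e$ is not an integral combination of the $N_i$, a contradiction.

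For (iii), ampleness of $\OO_S(H)$ for $g\ge 4$ is the Nakai--Moishezon criterion: $H^2=2g-6>0$, and for an irreducible curve $\Gamma=xC+\sum y_iN_i$ one computes $H\cdot\Gamma=x(2g-2)+\sum y_i$; nefness of $C$ forces $x\ge 0$, with $x=1$ only if $\Gamma=C$, while for $x=0$ the only effective irreducible class is some $N_j$, with $H\cdot N_j=1$. For very ampleness when $g\ge 6$ I would invoke the Saint-Donat criterion on $K3$ surfaces: an ample $H$ fails to be very ample (or even base-point free) only if there is an effective divisor $D$ with $D^2=0$ and $D\cdot H\le 2$, or $H\equiv 2B$ with $B^2=2$. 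The second is impossible since $\tfrac12 C\notin\Lambda_g$, so $H$ is not $2$-divisible; for the first, an effective $D$ with $D^2=0$ has positive coefficient $x\ge 1$ on $C$ (as $\mathfrak N$ is negative definite), whence $\sum y_i^2=x^2(g-1)$, and combining $D\cdot H\le 2$ with Cauchy--Schwarz $(\sum y_i)^2\le 8\sum y_i^2$ yields $x(g-3)<2$, impossible for $g\ge 6$. Hence $\phi_H$ is a closed embedding, and since $H\cdot N_i=1$ it maps each $N_i$ isomorphically onto a line.

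For (ii) and (iv), I would use the theorem of Green and Lazarsfeld that the Clifford index of a smooth curve in a complete linear system on a $K3$ surface is computed by the restriction of a line bundle on the surface; a short computation in $\Lambda_g$, using the orthogonality of $C$ and $\mathfrak N$ and the negative definiteness of $\mathfrak N$ to bound $C\cdot L$ from below for every $L$ with $h^0(L),h^0(C-L)\ge 2$, shows that the generic bound $[\tfrac{g-1}{2}]$ is attained and not beaten. For (iv), a general $D\in|H|$ satisfies $D^2=2g-6=2p_a(D)-2$ with $p_a(D)=g-2$ and $\OO_D(H)=K_D$, so the general hyperplane section of $\phi_H(S)\subset\PP^{g-2}$ is a canonically embedded curve; applying the same Clifford-index analysis to $|H|$ shows that for $g\ge 7$ (so $p_a(D)\ge 5$ and $\mathrm{Cliff}(D)=[\tfrac{g-3}{2}]\ge 2$) this canonical curve is neither trigonal nor a smooth plane quintic, hence cut out by quadrics by Petri's theorem, and one lifts the quadrics from $D\subset\PP^{g-3}$ to $S\subset\PP^{g-2}$ using the restriction sequences $0\to\I_S(k-1)\to\I_S(k)\to\I_{D/\PP^{g-3}}(k)\to 0$ and the projective normality of $\phi_H(S)$, which follows from Saint-Donat's theorem since the hyperplane sections are non-hyperelliptic.

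I expect the main obstacle to lie in (ii) and (iv): one must pin down the precise form of the Green--Lazarsfeld/Knutsen statement on Clifford indices of curves on $K3$ surfaces and then carefully enumerate the line bundles $L\in\Lambda_g$ that could a priori compute a Clifford index smaller than the generic one — in particular excluding the low-degree pencils one might build from the $N_i$ and from $e$ — and similarly verify that no trigonal or plane-quintic structure survives on the general member of $|H|$ when $g\ge 7$. By contrast, the cohomology vanishing in (i) and the Nakai--Moishezon/Saint-Donat verifications in (iii) are routine once the intersection numbers above are in hand.
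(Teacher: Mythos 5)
Your treatment of (i) and (ii) is essentially the paper's. For (i) the paper uses the same facts ($e^2=-4$, Riemann--Roch) but disposes of $H^0(S,e)=0$ more tersely, noting that $2e$ has $N_1+\cdots+N_8$ as its unique effective representative, so an effective $e$ would force an odd coefficient to be even; your decomposition into irreducible components is a correct expansion of this. For (ii) the paper also invokes Green--Lazarsfeld [GL2], and its punch line is the observation that $C\cdot\ell\equiv 0 \ (\mathrm{mod }\ 2g-2)$ for every $\ell\in\mathrm{Pic}(S)$ (a consequence of $C\cdot N_i=0$), which makes the GL-normalized bound $0<C\cdot D\leq g-1$ impossible. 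Your phrasing in terms of bounding $C\cdot L$ from below amounts to the same thing, since that divisibility forces $C\cdot L\geq 2g-2$ once $C\cdot L>0$.

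For (iii) the paper simply cites [GS, Prop.~3.2 and Lemma~3.1], whereas you reprove ampleness via Nakai--Moishezon and very ampleness via Saint-Donat. This is a legitimate, more self-contained alternative, but your ampleness check is incomplete as written: you address $x=0$ and $x=1$ only, and the claim that $x=1$ forces $\Gamma=C$ is false; one must also verify $H\cdot\Gamma>0$ for irreducible $\Gamma=xC+\sum y_iN_i$ with $x\geq 1$ in general, which requires the constraint $\Gamma^2\geq -2$ together with a Cauchy--Schwarz bound on $\sum y_i$.

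For (iv) you take a genuinely different and more circuitous route, and this is where there is a real gap. The paper applies Saint-Donat's Theorem 7.2 directly to $(S,H)$: the ideal is cut out by quadrics unless there is an irreducible $\Gamma$ with $\Gamma^2=0$ and $H\cdot\Gamma=3$, and the Cauchy--Schwarz inequality $\bigl(\sum b_i\bigr)^2\leq 8\sum b_i^2$ rules this out for $g\geq 7$. You instead pass to the general hyperplane section $D\in|H|$, a canonical curve of genus $g-2$, invoke Petri's theorem, and lift quadrics to $S$ via projective normality. The step you leave unjustified is that $\mathrm{Cliff}(D)=[\frac{g-3}{2}]\geq 2$ by ``the same Clifford-index analysis'': this is not the same analysis as (ii), because $H\cdot N_i=1\neq 0$, so there is no divisibility of $H\cdot\ell$ to exploit. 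Ruling out a trigonal pencil or plane-quintic model on $D$ requires a Cauchy--Schwarz estimate for line bundles of small $H$-degree — precisely the computation the paper performs on the surface. If you carry out that estimate your argument closes, but the detour through Petri and projective normality then buys nothing over the paper's one-line use of [SD, Thm.~7.2].
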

\begin{proof} Recalling that $e^{\otimes 2}=\OO_S(N_1+\cdots+N_8)$ and that the curves $\{N_i\}_{i=1}^8$ are pairwise disjoint, it follows that $H^0(S, e)=0$ and clearly $H^2(S, e)=0$. Since $e^2=-4$, by Riemann-Roch one finds that $H^1(S, e)=0$ as well.

In order to prove (ii) we assume that $\mathrm{Cliff}(C)<[\frac{g-1}{2}]$. From \cite{GL2} it follows that there exists a divisor
$D\in \mathrm{Pic}(S)$ such that $h^i(S, \OO_C(D))\geq 2$ for $i=0, 1$ and $C\cdot D\leq g-1$, such that $\OO_C(D)$ computes the Clifford index of $C$, that is,
$\mathrm{Cliff}(C)=\mathrm{Cliff}(\OO_C(D))$. But $C\cdot \ell\equiv 0 \ \mathrm{ mod } \ 2g-2$ for every class $\ell\in \mathrm{Pic}(S)$, hence no such divisor $D$ can exist.

Moving to (iii), the ampleness (respectively very ampleness) of $\OO_S(H)$ is proved in \cite{GS} Proposition 3.2 (respectively Lemma 3.1). From the exact sequence
$$
0 \longrightarrow \mathcal O_S(-H) \longrightarrow \mathcal O_S(e) \longrightarrow O_C(e) \longrightarrow 0,
$$
one finds that $h^1(S, \OO_S(H)) = 0$ and then $\mathrm{dim }|H| = g-2$. Furthermore $H\cdot N_i = 1$ for  $i = 1, \ldots, 8$ and the claim follows.

To prove (iv), following \cite{SD} Theorem 7.2, it suffices to show that there exists no irreducible curve $\Gamma \subset S$ with $\Gamma^2=0$ and
$H\cdot \Gamma=3$. Assume by contradiction that $\Gamma\equiv aC-b_1 N_1-\cdots -b_8 N_8$ is such a curve, where necessarily  $a, b_i\in \mathbb Z_{\leq 0}$. Then $\sum_{i=1}^8 b_i=2ag-2a-3$ and $\sum_{i=1}^8 b_i^2=a^2(g-1)$. Applying the Cauchy-Schwarz inequality $\bigl(\sum_{i=1}^8 b_i\bigr)^2\leq 8\bigl(\sum_{i=1}^8 b_i^2\bigr)$, we obtain an immediate contradiction.
\end{proof}

We consider the $\PP^g$-bundle $p_g:\P_g^{\mathfrak{N}}\rightarrow \F_g^{\mathfrak{N}}$, as well as the map
$$\chi_g:\P_g^{\mathfrak{N}}\rightarrow \cR_g, \ \ \mbox{  } \chi_g([S, e, C]):=[C, e_C:=e\otimes \OO_C]$$ defined in the introduction. We fix a  Nikulin surface $[S, e, \OO_S(C)]\in \P_g^{\mathfrak{N}}$. A Lefschetz pencil of curves $\{C_{\lambda}\}_{\lambda \in \PP^1}$ inside $|\OO_S(C)|$ induces a rational curve
$$\Xi_g:=\{[C_{\lambda}, \ e_{C_{\lambda}}:=e\otimes \OO_{C_{\lambda}}]:\lambda\in \PP^1\}\subset \rr_g.$$ In the range where $\chi_g$ is a dominant map, $\Xi_g$ is a rational curve passing through a general point of $\rr_g$, and it is of some interest to compute its numerical characters. If $\pi:\rr_g\rightarrow \mm_g$ denotes the projection map, we recall the formula \cite{FL} Example 1.4
\begin{equation}\label{pullbackrg}
\pi^*(\delta_0)=\delta_0^{'}+\delta_0^{''}+2\delta_{0}^{\mathrm{ram}},
\end{equation}
where $\delta_0^{'}:=[\Delta_0^{'}], \ \delta_0^{''}:=[\Delta_0^{''}]$ and $\delta_0^{\mathrm{ram}}:=[\Delta_0^{\mathrm{ram}}]$ are boundary divisor classes on $\rr_g$ whose meaning we recall. Let us fix a general point $[C_{xy}]\in \Delta_0$ induced by a $2$-pointed curve $[C, x, y]\in \cM_{g-1, 2}$ and the normalization map $\nu:C\rightarrow C_{xy}$, where $\nu(x)=\nu(y)$. A general point of $\Delta_0^{'}$ (respectively of $\Delta_0^{''}$) corresponds to a stable Prym curve $[C_{xy}, \eta]$, where $\eta\in \mathrm{Pic}^0(C_{xy})[2]$ and $\nu^*(\eta)\in \mathrm{Pic}^0(C)$ is non-trivial
(respectively, $\nu^*(\eta)=\OO_C$). A general point of $\Delta_{0}^{\mathrm{ram}}$ is of the form $[X, \eta]$, where $X:=C\cup_{\{x, y\}} \PP^1$ is a quasi-stable curve, whereas $\eta\in \mathrm{Pic}^0(X)$ is characterized by $\eta_{\PP^1}=\OO_{\PP^1}(1)$ and $\eta_C^{\otimes 2}=\OO_C(-x-y)$.
\begin{proposition}\label{intnumbers}
 If $\Xi_g\subset \rr_g$ is the curve induced by a pencil on a Nikulin surface, then
$$\Xi_g\cdot \lambda=g+1, \ \Xi_g\cdot \delta_0^{'}=6g+2, \ \  \Xi_g\cdot \delta_0^{''}=0\  \mbox{ and }\ \Xi_g\cdot \delta_0^{\mathrm{ram}}=8.$$
It follows that $\Xi_g\cdot K_{\rr_g}=g-15$.
\end{proposition}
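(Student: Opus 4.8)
The plan is to realise $\Xi_g$ through a general pencil on $S$, identify its singular fibres, and read off the four intersection numbers; the identity $\Xi_g\cdot K_{\rr_g}=g-15$ then follows formally from the known expression for the canonical class of $\rr_g$. Fix a general pencil $\ell\subset|\OO_S(C)|=\PP^g$ and blow up the $C^2=2g-2$ base points to obtain a fibration $\phi\colon\hat S\to\PP^1$ whose fibres are the members of $\ell$. For each $i$ the members containing $N_i$ form the hyperplane $H_i:=\PP H^0(S,\I_{N_i/S}(C))\subset|\OO_S(C)|$ (a hyperplane since $(C-N_i)^2=2g-4$), so $\ell$ meets $H_i$ in one point, with member $C_{\mu_i}=N_i\cup D_i$, $D_i\in|C-N_i|$ smooth of genus $g-1$ meeting $N_i$ transversally in $N_i\cdot(C-N_i)=2$ points. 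All other singular members are irreducible $1$-nodal by Bertini, and an Euler characteristic count — using $e(\hat S)=e(S)+(2g-2)=2g+22$, that each $C_{\mu_i}$ has Euler defect $2$ and each $1$-nodal fibre defect $1$ — gives exactly $6g+2$ of the latter. Composing $\Xi_g$ with $\pi\colon\rr_g\to\mm_g$ produces $B_g:=\pi(\Xi_g)$ with $\pi_*\Xi_g=B_g$, $\deg(\pi|_{\Xi_g})=1$, and $B_g\cdot\delta_i=0$ for $i\geq1$ since every fibre of $\phi$ has irreducible stable model.

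Since $\hat S\to\PP^1$ is a semistable family with smooth total space, $B_g\cdot\delta_0$ equals the number of nodes in its fibres, namely $(6g+2)+2\cdot8=6g+18$ (each $C_{\mu_i}$ has two nodes); combined with $B_g\cdot\kappa_1=c_1(\omega_{\hat S/\PP^1})^2=K_{\hat S}^2+4(2g-2)=6g-6$ and $\kappa_1=12\lambda-\delta$ this gives $\Xi_g\cdot\lambda=B_g\cdot\lambda=g+1$, as $\lambda$ is pulled back from $\mm_g$. For the ramification divisor, note $e\cdot N_i=\tfrac12N_i^2=-1$, so the limiting Prym sheaf at $C_{\mu_i}$ restricts to $\OO_{N_i}(-1)$, of odd degree; hence in $\rr_g$ one keeps $N_i$ as the exceptional component, and the limit is the quasi-stable Prym curve $[D_i\cup N_i,\eta]$ with $\eta$ of degree $\pm1$ on $N_i$ and $\eta_{D_i}^{\otimes2}=\OO_{D_i}(\pm(x_i+y_i))$ — a general point of $\Delta_0^{\mathrm{ram}}$. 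Since $\pi$ is simply ramified along $\Delta_0^{\mathrm{ram}}$ while $B_g$ meets $\Delta_0\subset\mm_g$ with multiplicity $2$ at the point below $C_{\mu_i}$, the curve $\Xi_g$ meets $\Delta_0^{\mathrm{ram}}$ transversally there, so $\Xi_g\cdot\delta_0^{\mathrm{ram}}=8$.

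At each of the $6g+2$ irreducible $1$-nodal members $C_{\lambda_0}$ the sheaf $e|_{C_{\lambda_0}}$ is a line bundle, non-trivial $2$-torsion because the sequence $0\to\OO_S(-H)\to\OO_S(e)\to e|_{C_{\lambda_0}}\to0$ and $H^0(S,e)=H^0(S,\OO_S(-H))=H^1(S,\OO_S(-H))=0$ force $h^0(C_{\lambda_0},e|_{C_{\lambda_0}})=0$; moreover its pullback to the normalisation is non-trivial, since otherwise the two components $\Gamma_1,\Gamma_2$ of $f^{-1}(C_{\lambda_0})\subset\tilde{S}$ contradict the Hodge index theorem exactly as in the argument preceding Lemma \ref{easyfacts} (one gets $0\geq(\Gamma_1-\Gamma_2)^2=4g-12$, impossible for $g\geq4$). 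Thus $\Xi_g$ meets $\Delta_0'$ transversally at these points, $\Xi_g\cdot\delta_0'=6g+2$, and then $\Xi_g\cdot\delta_0''=B_g\cdot\delta_0-\Xi_g\cdot\delta_0'-2\,\Xi_g\cdot\delta_0^{\mathrm{ram}}=(6g+18)-(6g+2)-16=0$ by \eqref{pullbackrg}. Finally, Riemann–Hurwitz for $\pi$ (simply branched along $\Delta_0^{\mathrm{ram}}$), together with $K_{\mm_g}=13\lambda-2\delta_0-3\delta_1-2\delta_2-\cdots$ and \eqref{pullbackrg}, yields $K_{\rr_g}\equiv13\lambda-2\delta_0'-2\delta_0''-3\delta_0^{\mathrm{ram}}-(\text{classes supported over }\Delta_i,\ i\geq1)$, and since $\Xi_g$ meets none of the latter,
$$\Xi_g\cdot K_{\rr_g}=13(g+1)-2(6g+2)-3\cdot8=g-15.$$

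The step I expect to be the main obstacle is the local analysis at the eight reducible members $C_{\mu_i}$: verifying that the stable-Prym limit actually lands on $\Delta_0^{\mathrm{ram}}$ — which is precisely where the divisibility $\OO_S(N_1+\cdots+N_8)=2e$, through the equality $e\cdot N_i=-1$, is used in an essential way — and establishing transversality there by reconciling the multiplicity-$2$ intersection of the underlying curve $B_g$ with $\Delta_0\subset\mm_g$ against the simple ramification of $\pi$ along $\Delta_0^{\mathrm{ram}}$.
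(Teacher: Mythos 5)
Your proof is correct and follows the same overall strategy as the paper (a Lefschetz pencil in $|\OO_S(C)|$, eight reducible members $N_i+C_i$ landing on $\Delta_0^{\mathrm{ram}}$, all other singular members irreducible one-nodal, then plug into the canonical class formula). Two differences are worth noting. First, where the paper cites \cite{FP} Lemma~2.4 for $\Xi_g\cdot\lambda=g+1$, $\Xi_g\cdot\pi^*\delta_0=6g+18$ and $\Xi_g\cdot\pi^*\delta_i=0$, you rederive these from scratch (Euler characteristic of $\hat S\to\PP^1$ plus Mumford's relation $\kappa_1=12\lambda-\delta$); that is the same computation, just inlined. Second, and more substantively, the paper simply asserts $\Xi_g\cdot\delta_0''=0$, whereas you actually prove it: you show $\nu^*(e|_{C_{\lambda_0}})$ is nontrivial on the normalisation of each irreducible one-nodal member by observing that otherwise $\tilde C_{\lambda_0}=f^{-1}(C_{\lambda_0})$ would split into two components $\Gamma_1,\Gamma_2$ with $(\Gamma_1-\Gamma_2)^2=4g-12>0$ while $(\Gamma_1-\Gamma_2)\cdot(\Gamma_1+\Gamma_2)=0$ and $(\Gamma_1+\Gamma_2)^2>0$, violating the Hodge index theorem. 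This closes a genuine gap the paper leaves implicit, at the mild cost of requiring $g\geq 4$ (harmless for all the applications in the paper, which use $g\geq 5$). You also correctly get $e\cdot N_i=-1$ and hence $e|_{N_i}=\OO_{N_i}(-1)$; the paper writes $\OO_{N_i}(1)$, a sign slip, but only the odd parity is used. Your transversality argument at $\Delta_0^{\mathrm{ram}}$ — local multiplicity $2$ of $B_g\cdot\delta_0$ at the two-nodal fibre matched against $\pi^*\delta_0\supset 2\delta_0^{\mathrm{ram}}$ — is also the correct way to see the transversality the paper asserts.
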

\begin{proof} We use \cite{FP} Lemma 2.4 to find that \
$\Xi_g\cdot \lambda=\pi_*(\Xi_g)\cdot \lambda=g+1$ and $\Xi_g\cdot \pi^*(\delta_0)=\pi_*(\Xi_g)\cdot \delta_0=6g+18$, as well as \ $\Xi_g\cdot \pi^*(\delta_i)=0$ for $1\leq i\leq [g/2]$. For each $1\leq i\leq 8$, the sublinear system $\PP\ H^0(\OO_S(C-N_i))\subset \PP \ H^0(\OO_S(C))$ intersects $\Xi_g$ transversally in one point which corresponds to a curve $N_i+C_i\in |\OO_S(C)|$, where $N_i\cdot C_i=-N_i^2=2$ and $C_i\equiv C-N_i$. Furthermore $e\otimes \OO_{N_i}=\OO_{N_i}(1)$ and $e_{C_i}^{\otimes 2}=\OO_{C_i}(-N_i\cdot C_i)$. Each of these points lie in the intersection $\Xi_g\cap \Delta_0^{\mathrm{ram}}$. All remaining curves in $\Xi_g$ are irreducible, hence $\Xi_g\cdot \delta_0^{\mathrm{ram}}=8$. Since $\Xi_g\cdot \delta_0^{''}=0$, from (\ref{pullbackrg}) we find that $\Xi_g\cdot \delta_0^{'}=6g+2$.  Finally, according to \cite{FL} Theorem 1.5 the formula $K_{\rr_g}\equiv 13\lambda-2(\delta_0^{'}+\delta_0^{''})-3\delta_{0}^{\mathrm{ram}}-\cdots\in \mathrm{Pic}(\rr_g)$ holds, therefore putting everything together, \ $\Xi_g\cdot K_{\rr_g}=g-15$.
\end{proof}
The calculations in Proposition \ref{intnumbers} are applied now to show that syzygies of Prym-canonical curves on Nikulin surfaces are exceptional when compared to those of general Prym-canonical curves. To make this statement precise, let us recall the \emph{Prym-Green Conjecture}, see \cite{FL} Conjecture 0.7: If $g:=2i+6$ with $i\geq 0$, then the locus
$$\cU_{g, i}:=\{[C, \eta]\in \cR_{2i+6}: K_{i, 2}(C, K_C\otimes \eta)\neq 0\}$$
is a \emph{virtual divisor}, that is, the degeneracy locus of two vector bundles of the same rank defined over $\cR_{2i+6}$. The statement of the Prym-Green Conjecture is that this vector bundle morphism is generically non-degenerate:
\vskip 4pt

\noindent {\emph{Prym-Green Conjecture:}} $K_{i, 2}(C, K_C\otimes \eta)=0$ for a general Prym curve $[C, \eta]\in \cR_{2i+6}$.
\vskip 3pt

The conjecture is known to hold in bounded genus and has been used in \cite{FL} to show that $\rr_g$ is of general type when $g\geq 14$ is even.
\begin{theorem}\label{prymgreen}
For each $[S, e, C]\in \P_{2i+6}^{\mathfrak{N}}$ one has $K_{i, 2}(C, K_C\otimes e_C)\neq 0$. In particular, the Prym-Green Conjecture fails along the locus $\cN_{2i+6}$.
\end{theorem}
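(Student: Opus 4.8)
The argument I would give is equivalent to proving $\cN_{2i+6}\subseteq\cU_{2i+6,i}$, and the plan is to test the virtual divisor $\cU_{2i+6,i}$ against the rational curves $\Xi_{2i+6}\subset\rr_{2i+6}$ of Proposition \ref{intnumbers} — the same mechanism by which pencils on $K3$ surfaces ``see'' syzygetic divisors. Write $g:=2i+6$. Recall from \cite{FL} that $\cU_{g,i}=\{[C,\eta]\in\cR_g:K_{i,2}(C,K_C\otimes\eta)\neq 0\}$ is the degeneracy locus of a morphism $\phi\colon\mathcal E\rightarrow\mathcal F$ of vector bundles of equal rank, defined over a suitable partial compactification of $\cR_g$ inside $\rr_g$, so that $\cU_{g,i}=Z(\det\phi)$ and its virtual class is computed there as
$$[\cU_{g,i}^{\mathrm{virt}}]=c_1(\mathcal F)-c_1(\mathcal E)=a\,\lambda-b_0'\,\delta_0^{'}-b_0''\,\delta_0^{''}-b^{\mathrm{ram}}\,\delta_0^{\mathrm{ram}}-\cdots\ \in\mathrm{Pic}(\rr_g),$$
with explicit positive coefficients $a,b_0',b_0'',b^{\mathrm{ram}}$ depending on $i$. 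If $\det(\phi)\equiv 0$ then $\cU_{g,i}=\cR_g$ and there is nothing to prove, so we may assume $\overline{\cU_{g,i}}=Z(\det\phi)$ is a genuine effective divisor in the class above.

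First I would record that $\cU_{g,i}$ is closed in $\cR_g$: since $h^1(C,K_C\otimes\eta)=h^0(C,\eta^{\vee})=0$, the bundle $K_C\otimes\eta$ and all its powers have $h^0$ independent of $[C,\eta]$, so $K_{i,2}(C,K_C\otimes\eta)$ is the middle cohomology of a three-term complex of vector bundles on $\cR_g$, whence $\dim K_{i,2}$ is upper-semicontinuous and $\cU_{g,i}=\overline{\cU_{g,i}}$. The main step is then a single numerical check. Intersecting $\Xi_g$ with the virtual class and inserting the values $\Xi_g\cdot\lambda=g+1$, $\Xi_g\cdot\delta_0^{'}=6g+2$, $\Xi_g\cdot\delta_0^{''}=0$, $\Xi_g\cdot\delta_0^{\mathrm{ram}}=8$ and $\Xi_g\cdot\delta_j=0$ for $j\geq 1$ from Proposition \ref{intnumbers}, one obtains
$$\Xi_{g}\cdot[\cU_{g,i}^{\mathrm{virt}}]=a\,(g+1)-b_0'\,(6g+2)-8\,b^{\mathrm{ram}},$$
and the point — which I expect to be the only real work, since it rests entirely on the explicit coefficients imported from \cite{FL} — is to verify that this integer is \emph{strictly negative} for every $i\geq 1$ (the $K3$ analogue being that $\kk_{10}\cdot(\text{$K3$ pencil})=7\cdot 11-(6\cdot 10+18)=-1$).

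Granting the negativity, the rest is formal: an effective divisor meeting the irreducible curve $\Xi_g$ in negative degree must contain it, so $\Xi_g\subseteq Z(\det\phi)$, and hence every smooth member $C_\lambda$ of the pencil — which by construction gives a point $[C_\lambda,e_{C_\lambda}]=\chi_g([S,e,C_\lambda])\in\cN_g$ — lies in $Z(\det\phi)\cap\cR_g=\cU_{g,i}$. Letting the Nikulin surface $[S,e,\OO_S(C)]\in\F_g^{\mathfrak N}$ and the Lefschetz pencil through a fixed member of $|\OO_S(C)|$ vary, these members sweep out a dense subset of $\cN_g=\mathrm{Im}(\chi_g)$, so, $\cU_{g,i}$ being closed, $\overline{\cN_g}\subseteq\cU_{g,i}$; since $K_{i,2}(C,K_C\otimes e_C)$ depends only on $\chi_g([S,e,C])$ this gives $K_{i,2}(C,K_C\otimes e_C)\neq 0$ for every $[S,e,C]\in\P_g^{\mathfrak N}$. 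In the remaining case $i=0$ (that is, $g=6$, outside the range where $\Xi_g$ is generic in $\cR_6$) the identity $K_{0,2}(C,K_C\otimes\eta)=\mathrm{coker}\,\mu_0(K_C\otimes\eta)$ shows that the assertion is exactly the description of $\cN_6$ in Theorem \ref{gen6}.

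Finally I would point out the geometry behind the phenomenon, as motivation. By adjunction $K_C\otimes e_C\cong\OO_C(H)$ (since $K_C=\OO_C(C)$ and $e_C$ is $2$-torsion, $\OO_C(C)\otimes\OO_C(e)\cong\OO_C(C-e)$), so for $g\geq 7$ the Prym-canonical curve $C\subset\PP^{g-2}$ sits on the projectively normal, quadratically presented $K3$ surface $\phi_H(S)$ of Lemma \ref{easyfacts}, though \emph{not} as a hyperplane section. Feeding $0\rightarrow\OO_S(-e)\rightarrow\OO_S(H)\rightarrow\OO_C(H)\rightarrow 0$ and its twists into the long exact sequence of Koszul cohomology, using $H^{\bullet}(S,\OO_S(-e))=0$ (Lemma \ref{easyfacts}) together with the vanishing $K_{i,2}(S,\OO_S(H))=0$ — which holds because a general member of $|H|$ has maximal Clifford index, by the Cauchy--Schwarz estimate of the proof of Lemma \ref{easyfacts}, and so satisfies Green's conjecture — one identifies $K_{i,2}(C,K_C\otimes e_C)$ with a Koszul group of the ideal of $C$ in $S$, which is forced to be non-zero by the $g-7$ ``extra'' quadrics $H^0(S,\OO_S(H-e))$ through $C$ and the eight lines $N_1,\dots,N_8$. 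This is the Prym analogue of the Green--Lazarsfeld extra syzygies of curves on $K3$ surfaces, but for the proof the intersection-theoretic route above is cleaner and uniform in $i$.
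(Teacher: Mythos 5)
Your strategy is exactly the paper's: test the virtual class of $\overline{\cU}_{g,i}$ against the rational curve $\Xi_g\subset\rr_g$ induced by a Lefschetz pencil on the Nikulin surface, conclude $\Xi_g\subset\overline{\cU}_{g,i}$ from negativity of the intersection number, and sweep out $\cN_g$ by varying the pencil. The remarks on closedness of $\cU_{g,i}$ and the Koszul-theoretic geometry are sound supplements, and the final containment argument is the one the paper uses.

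The one genuine gap is that you leave the central numerical step — the ``only real work,'' as you say — as a promissory note. The paper imports from \cite{FL} Theorem 0.6 the explicit class
\[
\overline{\cU}_{g,i}\equiv \binom{2i+2}{i}\Bigl(\tfrac{3(2i+7)}{i+3}\lambda-\delta_0^{'}-\tfrac{3}{2}\delta_0^{\mathrm{ram}}-\alpha\,\delta_0^{''}-\cdots\Bigr),
\]
and then substitutes $\Xi_g\cdot\lambda=2i+7$, $\Xi_g\cdot\delta_0^{'}=12i+38$, $\Xi_g\cdot\delta_0^{\mathrm{ram}}=8$, $\Xi_g\cdot\delta_0^{''}=0$; the parenthesized expression evaluates to $-\frac{2i+3}{i+3}$, giving $\Xi_g\cdot\overline{\cU}_{g,i}=-\binom{2i+3}{i}<0$. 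Until you supply the coefficients and carry out this check, the proof is incomplete, since the sign is precisely what the whole argument hinges on. Note also that the computation is valid for all $i\geq 0$, so the special handling of $i=0$ via Theorem \ref{gen6} that you propose is unnecessary: the intersection number for $g=6$ is $-\binom{3}{0}=-1$, and $\Xi_6$ still covers $\cN_6$ even though $\chi_6$ is not dominant.
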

\begin{proof}If the non-vanishing $K_{i, 2}(C, K_C\otimes \eta)\neq 0$ holds for a general point $[C, \eta]\in \cR_{g}$, then there is nothing to prove, hence we may assume that $\cU_{g, i}$ is a genuine divisor on $\cR_{g}$. The class of its closure inside $\rr_{g}$ has been calculated \cite{FL} Theorem 0.6:
$$\overline{\cU}_{g, i}\equiv {2i+2\choose i} \ \Bigl(\frac{3(2i+7)}{i+3}\lambda-\frac{3}{2}\delta_0^{\mathrm{ram}}-\delta_0^{'}-\alpha\ \delta_0^{''}-\cdots\Bigr)\in \mathrm{Pic}(\rr_{2i+6}).$$
From Proposition \ref{intnumbers}, by direct calculation one finds that $\Xi_g\cdot \overline{\cU}_{g, i}=-{2i+3\choose i}<0$, thus \ $\Xi_g\subset \overline{\cU}_{g, i}$. By varying $\Xi_g$ inside $\rr_g$, we obtain that $\cN_g\subset \overline{\cU}_{g, i}$, which ends the proof.
\end{proof}
\begin{remark}\label{comment}
A geometric proof of Theorem \ref{prymgreen} using the Lefschetz hyperplane principle for Koszul cohomology is given in \cite{AF} Theorem 3.5. The indirect proof presented here is however shorter and illustrates how cohomology calculations on $\rr_g$ can be used to derive geometric consequences
for individual Prym curves.
\end{remark}

\begin{remark} One might ask whether similar applications to $\cR_g$ can be obtained using Enriques surfaces.  There is a major difference between Prym curves $[C, \eta]\in \cR_g$ lying on a Nikulin surface and those lying on an Enriques surface. For instance, if $C\subset S$ is a curve of genus $g$ lying on an Enriques surface $S$, then from \cite{CD} Corollary 2.7.1
$$\mbox{gon}(C)\leq 2\ \mathrm{inf}\bigl\{F\cdot C: F\in \mathrm{Pic}(S), \ F^2=0, \ F\not{\!\!\equiv} 0\bigr\}\leq\ 2\sqrt{2g-2}.$$
 In particular, for $g$  sufficiently high,  $C$ is far from being Brill-Noether general. On the other hand,  we have seen that for $[S, e, C]\in \P_g^{\mathfrak{N}}$ such that $\mathrm{Pic}(S)=\Lambda_g$, one has that $\mbox{gon}(C)=[\frac{g+3}{2}]$. For this reason, the \emph{Prym-Nikulin} locus $\cN_g:=\mbox{Im}(\chi_g)\subset \cR_g$ appears as a more promising and less constrained locus than the \emph{Prym-Enriques} locus in $\cR_g$, being transversal to stratifications of $\cR_g$ coming from Brill-Noether theory.
\end{remark}

\section{The Prym-Nikulin locus in $\cR_g$ for $g\leq 7$}
In this section we give constructive proofs of Theorems \ref{uniruledpar} and \ref{r7}. Comparing the dimensions $\mbox{dim}(\P_g^{\mathfrak{N}})=11+g$ and $\mbox{dim}(\cR_g)=3g-3$, one may inquire whether the morphism $\chi_g:\P_g^{\mathfrak{N}}\rightarrow \cR_g$ is dominant when $g\leq 7$. The similar question for ordinary $K3$ surfaces has been answered by Mukai \cite{M1}. Let $\F_g$ denote the $19$-dimensional moduli space of polarized $K3$ surfaces of genus $g$ and consider the associated $\PP^g$-bundle $$\P_g:=\bigl\{[S, C]: C\subset S \mbox{ is a smooth curve such that } [S, \OO_S(C)]\in \F_g\bigr\}.$$
The map
$q_g:\P_g\rightarrow \cM_g$ forgetting the $K3$ surface is dominant if and only if $g\leq 11$ and $g\neq 10$. The result for $g=10$ is  contrary to untutored expectation since the general fibre of $q_{10}$ is $3$-dimensional, hence $\mbox{dim}(\mbox{Im}(q_{10}))=\mbox{dim}(\P_{10})-3=26$. A strikingly similar picture emerges for Nikulin surfaces and Prym curves. The morphism $\chi_g:\P_g^{\mathfrak{N}}\rightarrow \cR_g$ is dominant when $g\leq 7$ and $g\neq 6$. For each genus we describe a geometric construction that furnishes a Nikulin surface in the fibre $\chi_g^{-1}\bigl([C, \eta]\bigr)$ over a general point $[C, \eta]\in \cR_g$.

\subsection{Nikulin surfaces of genus 7}

We start with a general element $[C, \eta]\in \cR_7$ and construct a Nikulin surface containing $C$.  One may assume that $\mbox{gon}(C)=5$ and that the line bundle $\eta$ does not lie in the difference variety $C_2-C_2\subset \mbox{Pic}^0(C)$, or equivalently, the linear series $L:=K_C\otimes \eta\in W^5_{12}(C)$ is very ample. It is a consequence of  \cite{GL} Theorem 2.1 that the Prym-canonical image
$
C \stackrel{|L|}\longrightarrow \PP^5
$
is quadratically normal, that is,  $h^0(\PP^5, \I_{C/\mathbf P^5}(2)) = 3$.
\begin{lemma}\label{aux7} For a general $[C, \eta]\in \cR_7$, the base locus of $|\I_{C/\PP^5}(2)|$ is a smooth K3 surface.
\end{lemma}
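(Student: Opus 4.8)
The plan is to show that for general $[C,\eta]\in\cR_7$ the three quadrics containing the Prym-canonical curve $C\subset\PP^5$ meet in a surface which is a (possibly singular-looking but in fact smooth) $K3$ with the right Picard lattice. First I would set up the numerics: the Prym-canonical model has degree $12$ in $\PP^5$, and a smooth complete intersection of three quadrics in $\PP^5$ is a canonically trivial surface of degree $8$; since $8<12$, the net of quadrics through $C$ cannot cut out $C$ as a complete intersection, so its base locus $X$ is a genuine surface strictly containing $C$. By a standard liaison/linkage argument, $X$ has degree $8$ and, if it is reduced and irreducible with at worst rational double points, it is a K3 surface (the dualizing sheaf of a complete intersection of three quadrics in $\PP^5$ is trivial, and $C$ is residual to $X$ inside that complete intersection in the expected way). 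The curve $C$ sits on $X$ with $C^2 = 2g-2 = 12$ computed on $X$, consistent with $\OO_X(C)$ being the polarization of a genus-$7$ K3 — matching $\Lambda_7$.

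Next I would establish that the net $|\I_{C/\PP^5}(2)|$ is \emph{base-point free away from finitely many points} and that its base scheme is pure two-dimensional, so that $X$ is genuinely a surface and not a surface-plus-embedded-stuff. For this one uses the very ampleness of $L=K_C\otimes\eta$ (guaranteed for general $[C,\eta]\in\cR_7$ by the assumption $\eta\notin C_2-C_2$) together with the quadratic normality $h^0(\PP^5,\I_{C/\PP^5}(2))=3$ from \cite{GL} Theorem 2.1, and then a Koszul/Petri-type analysis of the quadrics: one checks the quadrics through $C$ cannot all contain a common plane or common line, which would force $C$ to be degenerate or to have a low-degree pencil incompatible with $\mathrm{gon}(C)=5$. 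The cleanest route, which I would take, is \textbf{deformation to a known example}: exhibit one Nikulin surface $S_0$ of genus $7$ with $\mathrm{Pic}(S_0)=\Lambda_7$ and a smooth curve $C_0\in|\OO_{S_0}(C)|$ such that the image of $C_0$ under $|K_{C_0}\otimes e_{C_0}|$ is quadratically normal and has base locus exactly $S_0$ (the ideal of $\Phi_H(S_0)$ is cut out by quadrics by Lemma \ref{easyfacts}(iv), and one identifies $|K_{C_0}\otimes e_{C_0}|$ with the restriction of the $H$-polarization — here $H\equiv C-e$ has $H^2 = 2g-6 = 8$ and $H\cdot C = 2g-2 = 12$, so $H$ restricts to $C$ as a $\mathfrak g^5_{12}$, and $H = K_C\otimes\eta$ on $C$). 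Smoothness of $X$ for general $[C,\eta]$ then follows by openness: the condition "base locus of the net of quadrics is smooth" is open in a suitable parameter space, it holds at $[S_0,C_0]$, and one checks the construction $[C,\eta]\mapsto X$ is a flat family near that point.

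The main obstacle I anticipate is the \emph{surjectivity/dominance bookkeeping}: knowing $X$ is a smooth K3 for the specific curves $C_0$ coming from Nikulin surfaces is almost immediate from Lemma \ref{easyfacts}, but one must verify that the general $[C,\eta]\in\cR_7$ actually arises this way, i.e. that $\chi_7$ is dominant, which is a priori what Theorem \ref{r7} asserts. To avoid circularity I would instead argue \emph{intrinsically}: take $[C,\eta]$ general in $\cR_7$ (not assumed to lie on any Nikulin surface), form $X=$ base locus of $|\I_{C/\PP^5}(2)|$, and prove directly that $X$ is a reduced irreducible surface with only rational double points by ruling out each degeneration — $X$ containing a plane, $X$ nonreduced, $X$ having an elliptic singularity or a non-isolated singularity — using that $C$ is Brill-Noether–Petri general (so $\mathrm{gon}(C)=5$, $\mathrm{Cliff}(C)=3$, no $\mathfrak g^2_7$, etc.) and that $\eta$ is general. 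Each exclusion is a short argument: e.g. a plane in $X$ would meet $C$ in a low-degree divisor cutting out a pencil violating $\mathrm{gon}(C)=5$; a non-isolated singularity along a curve $\Gamma\subset X$ would force, via adjunction on $X$ and the quadric equations, either $\deg\Gamma$ too small or $\Gamma$ rational with $\Gamma\cdot C$ small, again contradicting generality. Finally, once $X$ is a K3, the eight lines $\phi_H(N_i)$ reappear as the rational curves on $X$ disjoint from $C$ with the correct intersection numbers, and $\OO_X(C-?)$ divisibility recovers the Nikulin structure; but for the statement of Lemma \ref{aux7} itself only the smooth-K3 conclusion is needed, so I would stop there and defer the identification of the Nikulin lattice to the proof of Theorem \ref{r7}.
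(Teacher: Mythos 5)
Your ``cleanest route'' -- exhibit one Nikulin-surface example and conclude by openness -- is exactly the paper's proof, so you had the right idea in hand. The proof is short: for $(S,e,C)\in\P_7^{\mathfrak{N}}$ with $\mathrm{Pic}(S)=\Lambda_7$ and $H\equiv C-e$, Lemma \ref{easyfacts} gives that $\phi_H(S)\subset\PP^5$ is cut out by quadrics; quadratic normality of $K_C\otimes e_C$ (from $\mathrm{gon}(C)=5$) forces $H^0(\PP^5,\I_{S/\PP^5}(2))\cong H^0(\PP^5,\I_{C/\PP^5}(2))$, so the net of quadrics through $C$ cuts out precisely the smooth surface $S$; then the condition is open in $\cR_7$, and $\cR_7$ is irreducible, so it holds generically.

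Where you go astray is the circularity worry that made you abandon that route. You claim one must first know that the general $[C,\eta]\in\cR_7$ lies on a Nikulin surface, i.e.\ that $\chi_7$ is dominant, ``which is a priori what Theorem \ref{r7} asserts.'' That is not so. The logic of the openness argument requires only the \emph{existence} of a single point $[C_0,\eta_0]\in\cR_7$ at which the base locus is a smooth $K3$: a Nikulin section supplies that point, and openness plus irreducibility of $\cR_7$ then propagate the property to a general point -- without ever asserting that the general point itself comes from a Nikulin surface. (Dominance of $\chi_7$ is then a \emph{consequence} of this lemma together with Remark \ref{nikulin7}, not a prerequisite for it.) Consequently the ``intrinsic'' argument you pivot to -- ruling out planes, non-reduced or non-isolated-singularity components of the base locus case by case -- while not unreasonable, is considerably heavier machinery than needed and is not what the paper does; your initial instinct was the right one and should have been carried through.
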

\begin{proof}  The property that the base locus of $|\I_{C/\PP^5}(2)|$ is smooth, is open in $\cR_7$ and it suffices to exhibit a single Prym-canonical curve $[C, \eta]\in \cR_7$ satisfying it.  Let us fix an element $(S, e, C)\in \P_7^{\mathfrak{N}}$ such that $\mathrm{Pic}(S)=\Lambda_7$ and set $H\equiv C-e$. Then according to Lemma \ref{easyfacts}, $\phi_H:S\rightarrow \PP^5$ is an embedding whose image $\phi_H(S)$ is ideal-theoretically cut out by quadrics. Moreover $\mbox{gon}(C)=5$, hence  $K_C\otimes e_C\in W^5_{12}(C)$ is quadratically normal. This implies that $H^0(S, \OO_S(2H-C))=H^1(S, \OO_S(2H-C))=0$, and then $H^0(\PP^5, \I_{S/\PP^5}(2))\cong H^0(\PP^5, \I_{C/\PP^5}(2))$, therefore the quadrics in $|\I_{C/\PP^5}(2)|$ cut out precisely the surface $S$.
\end{proof}
\begin{remark}\label{nikulin7}
This proof  shows that if $[S, e, C]\in \P_7^{\mathfrak{N}}$ is general then $\chi_7^{-1}\bigl([C, e_C]\bigr)=[S, e, C]$ and in particular the fibre $\chi_7^{-1}(\bigl[C, e_C]\bigr)$ is reduced.  Indeed, let $[S', e', C]\in \P_7^{\mathfrak{N}}$ be an arbitrary  Nikulin surface containing $C$. Set $H'\equiv C-e'\in NS(S')$. We may assume
that $\mathrm{Pic}(S')=\Lambda_7$, therefore the map $\phi_{H'}: S'\rightarrow \PP^5$ is an embedding whose image is cut out by quadrics. Since $\mbox{Cliff}(C)=3$, from Lemma \ref{easyfacts} we find that $K_C\otimes e_C$ is quadratically normal and then $S'$ is cut out by the quadrics contained in Prym-canonical embedding of $C\subset \PP^5$.
\end{remark}

Since both $\P_7^{\mathfrak{N}}$ and $\cR_7$ are irreducible varieties of dimension $18$, Remark \ref{nikulin7} shows that $\chi_7:\P_7^{\mathfrak{N}}\rightarrow \cR_7$ is a birational morphism and we now describe $\chi_7^{-1}$.
\begin{proposition}\label{gen7} For a general $[C, \eta]\in \cR_7$, the surface $S:=\mathrm{bs } \ |\I_{C/\PP^5}(2)|$ is a polarized Nikulin surface of genus $7$. \end{proposition}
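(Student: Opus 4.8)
The plan is to leverage the work already done in Lemma \ref{aux7} and Remark \ref{nikulin7}. By Lemma \ref{aux7}, for general $[C,\eta]\in \cR_7$ the base locus $S:=\mathrm{bs}\,|\I_{C/\PP^5}(2)|$ is a smooth $K3$ surface, and $C\subset S$ with $\OO_S(C)$ having sectional genus $7$. What remains is to exhibit the Nikulin structure on $S$, i.e. to produce eight disjoint $(-2)$-curves $N_1,\dots,N_8$ on $S$ with $\OO_S(N_1+\cdots+N_8)$ divisible by $2$ in $\mathrm{Pic}(S)$ and with $C\cdot N_i=0$, so that $[S,e,\OO_S(C)]\in \F_7^{\mathfrak N}$ and $\chi_7([S,e,C])=[C,\eta]$.

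First I would argue by a specialization/openness argument in the spirit of Lemma \ref{aux7}: the locus of $[C,\eta]\in \cR_7$ for which $S$ carries a Nikulin involution inducing $\eta$ on $C$ is a constructible subset, and it suffices to check that it is nonempty (which is immediate, taking $[S,e,C]\in\P_7^{\mathfrak N}$ with $\mathrm{Pic}(S)=\Lambda_7$ as in Remark \ref{nikulin7}) and that it is open — equivalently, that the Nikulin data deforms with $[C,\eta]$. The key point, already essentially contained in Remark \ref{nikulin7}, is that for \emph{any} Nikulin surface $[S',e',C]\in\P_7^{\mathfrak N}$ containing $C$, the surface $S'$ is recovered as the base locus of the quadrics through the Prym-canonical curve $C\subset\PP^5$, because $\mathrm{Cliff}(C)=3$ forces $K_C\otimes e_C$ to be quadratically normal (Lemma \ref{easyfacts}(ii) and \cite{GL} Theorem 2.1). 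Hence the assignment $[C,\eta]\mapsto S$ is the genuine inverse of $\chi_7$ wherever both sides make sense: $S$ depends only on $[C,\eta]$, and over the (nonempty, open) image of $\chi_7$ it produces the unique Nikulin surface in the fibre.

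Concretely, the steps are: (1) By Lemma \ref{aux7}, $S:=\mathrm{bs}\,|\I_{C/\PP^5}(2)|$ is smooth; identify it as a $K3$ surface of genus $7$ from the exact sequences computing $H^0(\OO_S(H))$ and $h^1(\OO_S(H))=0$ used in the proof of Lemma \ref{easyfacts}(iii), so that $\OO_S(1)=\OO_S(H)$ with $H^2=2g-6=8$ and $C\equiv H+e$ for a suitable class $e$. (2) Since the construction is valid in a Zariski-open neighbourhood of the point $[C_0,\eta_0]$ coming from a lattice-general $[S_0,e_0,C_0]\in\P_7^{\mathfrak N}$, and since being a Nikulin surface of genus $7$ is a closed-and-open (i.e. locally constant, by the étale local triviality of the period map over $\F_7^{\mathfrak N}$) condition among the smooth $K3$ surfaces so produced, the Nikulin structure $(e,N_1,\dots,N_8)$ on $S_0$ deforms to one on $S$; here one uses that $\cR_7$ and $\P_7^{\mathfrak N}$ are both irreducible of dimension $18$, so the birational morphism $\chi_7$ of Remark \ref{nikulin7} is dominant and $S$ for general $[C,\eta]$ lies in the closure of the family $\{S_0\}$. (3) Conclude that $(S,e,\OO_S(C))$ is a polarized Nikulin surface of genus $7$ with $e_C=e\otimes\OO_C=\eta$, so $\chi_7^{-1}([C,\eta])=[S,C]$; the eight lines $\phi_H(N_i)\subset\PP^{5}$ are precisely the images of the $(-2)$-curves, as recorded in Lemma \ref{easyfacts}(iii).

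I expect the main obstacle to be step (2): upgrading the set-theoretic statement "$S$ is a smooth $K3$ for general $[C,\eta]$'' to the scheme/moduli-theoretic statement "$S$ carries a Nikulin involution with the right branch divisor.'' A clean way around it is not to deform abstractly but to run the construction in families: take the universal Nikulin surface $\mathcal S\to\P_7^{\mathfrak N}$, form the relative Prym-canonical embedding $\mathcal S\hookrightarrow\PP^5$-bundle over $\P_7^{\mathfrak N}$ via $|H|$, and observe that the relative base locus of the relative ideal sheaf of quadrics recovers $\mathcal S$ itself by cohomology and base change (using $H^0(S,\OO_S(2H-C))=H^1(S,\OO_S(2H-C))=0$ as in Lemma \ref{aux7}); this simultaneously shows $\chi_7$ is injective on a dense open set and that its inverse carries along the full Nikulin datum. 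Combined with the irreducibility and equality of dimensions of source and target, this forces $\chi_7$ to be birational and identifies the inverse as the map $[C,\eta]\mapsto[S,C]$ of the statement, completing the proof and simultaneously yielding Theorem \ref{r7}.
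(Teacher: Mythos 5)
Your overall strategy — specialize to a lattice-general Nikulin surface, invoke the quadratic normality of the Prym-canonical embedding to reconstruct $S$ from $C$, and conclude by irreducibility and dimension count that $\chi_7$ is birational — matches the way the paper sets things up via Lemma \ref{aux7} and Remark \ref{nikulin7}, and your ``clean way'' remark (running the reconstruction over the universal family and using cohomology and base change) is morally the same move the paper makes. However, the actual proof in the paper is more concrete than your step (2), and your deformation heuristic glosses over precisely the points the paper takes pains to pin down.

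First, the assertion that ``being a Nikulin surface of genus $7$ is a closed-and-open condition among the smooth $K3$ surfaces so produced'' is the thing to be proved, not a fact one may quote; the $(-2)$-curves $N_i$ are rigid, and it is not automatic that the class $N=2(C-H)$ stays effective as $[C,\eta]$ moves. The paper replaces this handwave with an explicit computation: setting $N:=2(C-H)$, one has $N^2=-16$, $N\cdot H=8$, $N\cdot C=0$, and the short exact sequence
\[
0 \longrightarrow \OO_S(N-C) \longrightarrow \OO_S(N) \longrightarrow \OO_C(N) \longrightarrow 0
\]
together with $\OO_C(N)\cong\OO_C$ (since $e_C=\OO_C(C-H)$ squares trivially) and $h^1(S,\OO_S(C-2H))=0$ (quadratic normality) gives $h^0(S,\OO_S(N))=1$; \emph{then} the degeneration to the lattice-general case forces $N\equiv N_1+\cdots+N_8$ with $N_i\cdot N_j=-2\delta_{ij}$. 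This intrinsic effectivity statement is the content your openness argument would need to supply.

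Second, and this is a genuine gap in your proposal, you never verify that the embedding $\mathbb Z\cdot C\oplus \mathfrak N\hookrightarrow\mathrm{Pic}(S)$ is \emph{primitive}, which is part of the definition of $\F_7^{\mathfrak N}$. By \cite{vGS} Proposition 2.2/2.7 there are two possible lattice types, and exhibiting eight disjoint $(-2)$-curves with $2$-divisible sum does not by itself tell you which one you are in. The paper checks this by computing the decomposition of $H^0(\tilde S,\OO_{\tilde S}(\tilde C))=H^0(S,\OO_S(C))\oplus H^0(S,\OO_S(C)\otimes e^{\vee})$ into $\iota_Y^*$-eigenspaces and invoking \cite{vGS} Proposition 2.7. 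If one instead argues purely by identifying $S$ with the $K3$ component of $\chi_7^{-1}([C,\eta])$ — as your last paragraph suggests — the primitivity is built into the definition of $\P_7^{\mathfrak N}$ and is automatic; but then the cohomological content of the proposition (that the Nikulin structure on $S$ is reconstructible from $[C,\eta]$ alone) evaporates. The paper's proof is chosen to make this reconstruction explicit, and the primitivity check is the step you should add to make your version complete.
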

\begin{proof} We show that $\mathrm{Pic}(S)\supset \mathbb Z\cdot C\oplus \mathfrak N$. Denote by $H\subset S$ the hyperplane class and let $N :\equiv 2(C-H)$, thus $N^2=-16$, $N\cdot H=8$ and $N\cdot C=0$. We aim to prove that $N$ is linearly equivalent to a sum of $8$ pairwise disjoint integral $(-2)$ curves on $S$. We consider the following exact sequence
$$
0 \longrightarrow \OO_S(N-C) \longrightarrow \OO_S(N) \longrightarrow \OO_C(N) \longrightarrow 0.
$$
Note that $\OO_C(N)$ is trivial because $e_C=\mathcal O_C(C-H)$ and that  $h^1(S, \OO_S(N-C)) = h^1(S, \OO_S(C-2H)) = 0$, because $C\subset \PP^5$ is quadratically normal. Passing to the long exact sequence, it follows that $h^0(S, \OO_S(N)) = 1$. Using Remark \ref{nikulin7} it follows that $N\equiv N_1+\cdots+N_8$, where $N_i\cdot N_j=-2\delta_{ij}$. Finally, to conclude that $[S, \mathbb Z\cdot C\oplus \mathfrak{N}]\in \F_7^{\mathfrak{N}}$ we must show that there is a primitive embedding $\mathbb Z\cdot C\oplus \mathfrak{N}\hookrightarrow \mbox{Pic}(S)$. We apply \cite{vGS} Proposition 2.7. Since $H^0(\tilde{S}, \OO_S(\tilde{C}))=H^0(S, \OO_S(C))\oplus H^0(S, \OO_S(C)\otimes e^{\vee})$ and sections in the second summand vanish on the exceptional divisor of the morphism $\sigma:\tilde{S}\rightarrow Y$, it follows that this is precisely the decomposition of $H^0(Y, \OO_Y(\tilde{C}))$ into $\iota_Y^*$-eigenspaces. Invoking \emph{loc. cit.}, we finish the proof.
\end{proof}

\subsection{The symmetric determinantal cubic hypersurface and Prym curves}
We provide a general set-up that allows us to reconstruct a Nikulin surface from a Prym curve of genus $g\leq 5$. Let us start with a curve $[C, \eta]\in \cR_g$ inducing an \'etale double cover $f:\tilde{C}\rightarrow C$ together with an involution $\iota:\tilde{C}\rightarrow \tilde{C}$ such that $f\circ \iota=f$. For each integer $r\geq -1$,  the
 \emph{Prym-Brill-Noether} locus is defined as the locus
$$V^r(C,\eta) := \{L\in \mathrm{Pic}^{2g-2}(\tilde{C}): \mathrm{Nm}_f(L) = K_C, \ h^0(L)\geq r+1 \hbox{ and } h^0(L)\equiv r+1 \ \mathrm{mod}\ 2\}.$$
Note that $V^{-1}(C, \eta)=\mathrm{Pr}(C, \eta)$.
For each line bundle $L\in V^r(C, \eta)$, the Petri map
$$\mu_0(L): H^0(\tilde{C}, L)\otimes H^0(\tilde{C}, K_{\tilde{C}}\otimes L^{\vee}) \rightarrow H^0(\tilde{C}, K_{\tilde{C}})$$ splits into an $\iota$-anti-invariant part
$$\mu_0^-(L): \Lambda^2 H^0(\tilde{C}, L) \rightarrow H^0(C, K_C\otimes \eta), \ \ \mbox{  } \ s\wedge t\mapsto s\cdot \iota^*(t)-t\cdot \iota^*(s),$$ and an $\iota$-invariant part respectively
$$\mu_0^+(L): \mathrm{Sym}^2H^0(\tilde{C}, L) \rightarrow H^0(C, K_C),\ \ \mbox{  }  s\otimes t+t\otimes s\mapsto s\cdot \iota^*(t)+t\cdot \iota^*(s).$$
For a general $[C, \eta]\in \cR_g$, the Prym-Petri map $\mu_0^-(L)$ is injective for every $L\in V^r(C, \eta)$ and $V^r(C, \eta)$ is equidimensional of dimension $g-1-{r+1\choose 2}$, see \cite{W}. We introduce the \emph{universal Prym-Brill-Noether variety}
$$\cR_g^r:=\Bigl\{\bigl([C, \eta], L\bigr): [C, \eta]\in \cR_g, \ \ L\in V^r(C, \eta)\Bigr\}.$$
When $g-1-{r+1\choose 2}\geq 0$, the variety $\cR_g^r$ is irreducible of dimension $4g-4-{r+1\choose 2}$.
We propose to focus on the case $r=2$ and $g\geq 4$ and choose a general triple $(f:\tilde{C}\rightarrow C, L)\in \cR_g^2$, such that $L$ is base point free and $h^0(\tilde{C}, L)=3$.
\vskip 2pt

Setting $\PP^2:=\PP\bigl(H^0(L)^{\vee}\bigr)$, we consider the quasi-\'etale double cover  $q:\PP^2\times \PP^2\rightarrow \PP^5$ obtained by projecting via the Segre embedding to the space of symmetric tensors. Note that $q$ is ramified along the diagonal $\Delta\subset \PP^2\times \PP^2$ and $V_4:=q(\Delta)\subset \PP^5$ is the Veronese surface. Moreover $\Sigma:=\mbox{Im}(q)$ is the determinantal symmetric cubic hypersurface isomorphic to the secant variety of $V_4$. We have the following  commutative diagram:
\begin{figure}[h]
$$\xymatrix@R=11pt{\tilde{C} \ar[rr]^-{(L, \iota^{*} L)} \ar[dd]_f && \PP^2\times \PP^2 \ar[dd]_q \ar@{^{}->}[rd] \\
&& & \PP^8=\PP\bigl(H^0 (L)^{\vee}\otimes H^0(L)^{\vee}\bigr) \ar@{-->}@<-1ex>[dl] \\
C\ar[rr]^-{\mu_0^+(L)} && *!<-22pt,0pt>{\PP^5=\PP(\mathrm{Sym}^2 H^0(L)^{\vee})}}
$$
\end{figure}

\noindent  Observe that the involution $\iota:\PP^8\rightarrow \PP^8$ given by $\iota[v\otimes w]:=[w\otimes v]$ where $v, w\in H^0(L)$, is compatible with $\iota:\tilde{C}\rightarrow \tilde{C}$. To summarize, giving a point $(\tilde{C}\rightarrow C, L)\in \cR_g^2$ is equivalent to specifying a symmetric determinantal cubic hypersurface $\Sigma \in H^0(\PP^{g-1}, \I_{C/\PP^{g-1}}(3))$ containing the canonical curve.

\subsection{A birational model of $\F_4^{\mathfrak{N}}$.}\ \
As a warm-up, we indicate how the set-up described above is a generalization of the construction that Catanese \cite{Ca} used to prove that $\cR_4$ is rational. For a general point $[C, \eta]\in \cR_4$ we find that $V^2(C, \eta)=\{L, \iota^*L\}$, that is, the pair $(L, \iota^*L)$ is uniquely determined. The map $\mu_0(L)$ has corank $2$ and $\PP^6_{\tilde{C}}:=\PP\bigl(\mathrm{Im }\ \mu_0(L)\bigr)\subset \PP^8$ has codimension $2$. The intersection $\tilde{T}:=(\PP^2\times \PP^2)\cap \PP^6_{\tilde{C}}$ is a del Pezzo surface of degree $6$, whereas $T:=\Sigma\cap \PP^3_+$ is a $4$-nodal Cayley cubic. Here we set $\PP^3_+:=\PP\bigl(H^0(K_C)^{\vee}\bigr)$. The double cover $q:\tilde{T}\rightarrow T$ is ramified at the singular points of $T$.
\vskip 4pt
To obtain a Nikulin surface containing $[C, \eta]$, we reverse this construction and start with a quartic rational normal curve $R\subset \PP^4$ and denote by $\bar{\mathcal{Y}}:=\mathrm{Sec}(R)\subset \PP^4$ its secant variety, which we view as a hyperplane section of $\Sigma\subset \PP^5$. Retaining the notation of diagram (\ref{diagram}), for a general quadric $Q\in |\OO_{\PP^4}(2)|$, the intersection $\bar{Y}:=\bar{\mathcal{Y}}\cap Q$ is a $K3$ surface with $8$ rational double points at $R\cap Q$. There exists a cover $q:Y\stackrel{2:1}\rightarrow \bar{Y}$ ramified at the singular points of $Y$, induced by restriction from the map $q:\PP^2\times \PP^2\rightarrow \Sigma$. Clearly $q:Y\rightarrow \bar{Y}$ is a Nikulin covering, and a hyperplane section in $|\OO_{\bar{Y}}(1)|$ induces a Prym curve $[C, \eta]\in \cR_4$ having general moduli. Moreover we have a birational isomorphism $$\F_4^{\mathfrak{N}}\stackrel{\cong}\dashrightarrow \PP\Bigl(H^0(\OO_{\PP^4}(2))\Bigr)^{\mathrm{ss}}\dblq SL_2,$$
where $PGL_2=\mbox{Aut}(R)\subset PGL_5$. An immediate consequence is that $\F_4^{\mathfrak{N}}$ is unirational.

\subsection{Nikulin surfaces of genus $3$.} We prove that $\chi_3:\P_3^{\mathfrak{N}}\rightarrow \cR_3$ is dominant and fix a complete intersection of $3$ quadrics $Y\subset \PP^5$ invariant with respect to an involution fixing a line $L\subset \PP^5$ and a $3$-dimensional linear subspace $\Lambda\subset \PP^5$. The projection $\pi_L:\PP^5\dashrightarrow \Lambda$ induces a quartic $\bar{Y}:=\pi_L(Y)$ with $8$ nodes, which is a Nikulin surface. We check that a general Prym curve $[C, \eta]\in \cR_3$ corresponding to an \'etale cover $f: \tilde C \to C$ embeds in such a surface.

Indeed, the canonical model $\tilde C \subset \PP^4$  is a complete intersection of $3$ quadrics. Fixing  projective coordinates on $\PP^4$, we can assume that the  involution $\iota: \tilde C \to \tilde C$ is induced by the projective involution $[x:y:u:v:t] \leftrightarrow [-x:-y:u:v:t]$. Note that the $\iota^*$anti-invariant quadratic forms are  vectors $q = ax + by$, where $a, b$ are linear forms in $u, v, t$. Since $\tilde C$ is complete intersection of 3 quadrics, no non-zero quadric $q = ax + by$ vanishes on $\tilde C$, for not, $\tilde C$ would intersect the plane $\lbrace x = y = 0 \rbrace$ and then $\iota$ would have  fixed points. Thus $\iota$ acts as the identity on the space $H^0(\PP^4, \I_{\tilde C/\PP^4}(2))$. Hence it follows $\tilde C = \lbrace a_1 + b_1 = a_2 + b_2 = a_3 + b_3 = 0 \rbrace$, where $a_i, b_i$ are quadratic forms  in $x,y$ and $u,v,t$. Passing to $\mathbf P^5$ by adding one coordinate $h$, we can choose quadratic forms $a_i + b_i + hl_i$, where $l_i$ is a general  linear form in $h, u, v, t$. Consider the surface  $Y \subset \mathbf P^5$ defined by the latter 3 equations. Then $[x:y:h:u:v:t] \leftrightarrow [-x:-y:h:u:v:t]$ induces a Nikulin involution on $Y$.  Let $\pi_L: Y \to \mathbf P^3$ be the projection of center $L = \lbrace h = u = v = t = 0 \rbrace$. Then $\overline Y := \pi_L(Y)$ is a quartic Nikulin surface and  $C = \pi_L(\tilde C)$ is a plane section of it.

\subsection{Nikulin surfaces of genus $5$}
To describe the morphism $\chi_5:\P_5^{\mathfrak{N}}\rightarrow \cR_5$ more geometrically, we use the set-up introduced in Subsection 2.2.
If $[C, \eta]\in \cR_5$ is general, then $\mbox{dim } V^2(C, \eta)=1$, the $\iota$-invariant Petri map $\mu_0^-(L)$ is injective, $\mu_0^+(L)$ surjective, thus $\mbox{dim}\bigl(\mbox{Coker } \mu_0(L)\bigr)=1$. We consider the hyperplane
$$\PP^7_{\tilde{C}}:=\PP\bigl(\mbox{Im}(\mu_0(L)\bigr)\subset \PP\bigl(H^0(L)^{\vee}\otimes H^0(L)^{\vee}\bigr)$$ and also set $\PP^4_+:=\PP\bigl(H^0(K_C)^{\vee}\bigr)\subset \PP^5$. Then we further denote
$$\tilde{T}:=(\PP^2\times \PP^2)\cap \PP^7_{\tilde{C}} \mbox{ and  }\  T:=\Sigma\cap \PP^4_+.$$ Note that $\tilde{T}$ is a degree $6$ threefold in $\PP^7_{\tilde{C}}$. Since the hyperplane $\PP^7_{\tilde{C}}$ is $\iota$-invariant, it follows $\tilde{T}$ is also endowed with the involution $\iota_{\tilde{T}}\in \mbox{Aut}(\tilde{T})$ such that $\mbox{Fix}(\iota_{\tilde{T}})=\Delta\cap \tilde{T}$ is a rational quartic curve in $\PP^4_+$.
Furthermore $T\subset \PP^4_+$ is the secant variety of $R$.
\begin{proposition}\label{gen5}
For a general point $[C, \eta, L]\in \cR_5^2$ the following statements hold:
\begin{enumerate}
\item The threefold $\tilde{T}\subset \PP^2\times \PP^2$ is smooth, while $T\subset \PP^4_+$  is singular precisely along $R$.
\item $h^0(\tilde{T}, \I_{\tilde{C}/\tilde{T}}(2))=3$. Moreover $H^i(\tilde{T}, \I_{\tilde{C}/\tilde{T}}(2))=0$ for $i=1, 2$.
\item Every quadratic section in the linear system $|\I_{\tilde{C}/\tilde{T}}(2)|$ is $\iota$-invariant, that is, $$H^0(\tilde{T}, \I_{\tilde{C}/\tilde{T}}(2))=q^* H^0(T, \I_{C/T}(2)).$$
\item A general quadratic section $Y\in |\I_{\tilde{C}/\tilde{T}}(2)|$ is a smooth $K3$ surface endowed with an involution $\iota_{Y}$ with fixed points precisely at the  $8$ points in the intersection $R\cap Y$.
\end{enumerate}
\end{proposition}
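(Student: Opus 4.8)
The plan is to work entirely inside the symmetric--determinantal picture of Subsection 2.2, writing $U:=H^0(\tilde C,L)$ so that $\PP^2=\PP(U^{\vee})$, $\PP^5=\PP(\mathrm{Sym}^2U^{\vee})$ and $q\colon\PP^2\times\PP^2\to\Sigma\subset\PP^5$ is the double cover there. The first thing to pin down is the single quadric $M_0\in\mathrm{Sym}^2U$ spanning $\ker\mu_0(L)=\ker\mu_0^+(L)$ (one--dimensional since $\mu_0^-(L)$ is injective and $\mu_0^+(L)$ surjective): then $\PP^4_+$ is the hyperplane $\{M_0=0\}\subset\PP^5$, the curve $R=V_4\cap\PP^4_+$ is the Veronese image of the conic $\{M_0=0\}\subset\PP(U^{\vee})$, while $\PP^7_{\tilde C}=\{M_0=0\}\subset\PP^8$, $\tilde T=q^{-1}(T)=(\PP^2\times\PP^2)\cap\PP^7_{\tilde C}$ and $R':=q^{-1}(R)=\mathrm{Fix}(\iota_{\tilde T})$. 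Everything hinges on the non--degeneracy $\mathrm{rank}\,M_0=3$: rank $1$ is impossible because $\mu_0^+(\ell^2)=2\,\ell\cdot\iota^*\ell\neq0$ for $\ell\neq0$, and rank $2$ would mean $M_0=\ell_1\ell_2$ with $\ell_1\cdot\iota^*\ell_2+\ell_2\cdot\iota^*\ell_1=0$, which makes $\mathrm{div}(\ell_1)-\mathrm{div}(\ell_2)$ an $\iota$--invariant divisor, equivalently produces a non--constant $\iota$--anti--invariant rational function on $\tilde C$ whose polar divisor is dominated by a member of $|L|$; I would exclude this for general $[C,\eta,L]$ by a parameter count, or --- since ``$\det M_0=0$'' is closed on $\cR_5^2$ --- by exhibiting one triple with $M_0$ of full rank. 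Granting $\mathrm{rank}\,M_0=3$, statement (i) follows: a hyperplane section of the Segre threefold $\PP^2\times\PP^2\subset\PP^8$ is singular exactly when the corresponding $3\times3$ matrix is, so $\tilde T$ is a smooth Fano threefold of index $2$ and degree $6$; and since $q$ is étale over $\Sigma\setminus V_4$, the map $q\colon\tilde T\to T$ is étale over $T\setminus R$, so $T$ is smooth there while inheriting from $\Sigma$ its transverse $A_1$ singularities along $R$, which is then a rational normal quartic.

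For (ii) and (iii) I would use $\OO_{\tilde T}(2)=-K_{\tilde T}$. From $0\to\OO_{\PP^2\times\PP^2}(1,1)\to\OO_{\PP^2\times\PP^2}(2,2)\to\OO_{\tilde T}(2)\to0$ one gets $h^0(\tilde T,\OO_{\tilde T}(2))=36-9=27$, and $H^{>0}(\tilde T,\OO_{\tilde T}(2))=0$ by Kodaira vanishing; since $\OO_{\tilde C}(2)=K_{\tilde C}^{\otimes2}$ has $h^0=24$ and $h^1=0$, the sequence $0\to\I_{\tilde C/\tilde T}(2)\to\OO_{\tilde T}(2)\to\OO_{\tilde C}(2)\to0$ reduces (ii) to the surjectivity of the restriction $\rho\colon H^0(\tilde T,\OO_{\tilde T}(2))\to H^0(\tilde C,K_{\tilde C}^{\otimes2})$, which yields $h^0(\I_{\tilde C/\tilde T}(2))=3$ and $H^1=H^2=0$. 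As $\tilde C=q^{-1}(C)$ is $\iota$--invariant, $\rho$ respects the $\iota$--eigenspace decomposition. On the invariant part $H^0(\tilde T,\OO_{\tilde T}(2))^+=H^0(T,\OO_T(2))=H^0(\PP^4_+,\OO(2))$ (dimension $15$, since the cubic threefold $T$ spans $\PP^4_+$), and because the general canonical curve $C\subset\PP^4_+$ of genus $5$ is projectively normal, $\rho^+$ is onto $H^0(C,K_C^{\otimes2})$; hence $h^0(\I_{\tilde C/\tilde T}(2))^+=3$. The remaining point, which is the heart of the matter, is that on the anti--invariant part the map $H^0(\tilde T,\OO_{\tilde T}(2))^-\to H^0(C,K_C^{\otimes2}\otimes\eta)$ between $12$--dimensional spaces is an isomorphism, i.e. $\tilde C$ lies on no non--zero $\iota$--anti--invariant quadric section of $\tilde T$. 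By projective normality of $\tilde T\subset\PP^7_{\tilde C}$ such a section comes from an $\iota$--anti--invariant quadric on $\PP^8$; these span $\mathrm{Sym}^2U\otimes\wedge^2U$ and automatically vanish on the fixed subspace $\PP(\mathrm{Sym}^2U^{\vee})$, hence on $R'$. Writing $q_*\OO_{\tilde T}=\OO_T\oplus\mathcal{L}$ with $\mathcal{L}|_C=\eta$, the injectivity sought is equivalent to the Koszul--type vanishing $H^0(T,\I_{C/T}(2)\otimes\mathcal{L})=0$, which I would establish by a parameter count or a computation on a degeneration. Once $h^0(\I_{\tilde C/\tilde T}(2))^-=0$ one gets (ii) together with the identity $H^0(\tilde T,\I_{\tilde C/\tilde T}(2))=q^*H^0(T,\I_{C/T}(2))$ of (iii).

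For (iv): by (iii) a general $Y\in|\I_{\tilde C/\tilde T}(2)|$ is $q^{-1}(\overline Y)$ with $\overline Y=T\cap Q$, $Q$ general in the net $|\I_{C/\PP^4_+}(2)|=\langle Q_1,Q_2,Q_3\rangle$ of quadrics cutting out the canonical curve $C$. One checks $K_{\overline Y}=(K_T+\OO_T(2))|_{\overline Y}=\OO_{\overline Y}$ and that $\overline Y$ is a K3 surface with exactly $8$ ordinary nodes, at the transverse intersection $R\cap Q$: it is smooth off these nodes by Bertini (the base locus of the net on $T\setminus R$ is $C$, and $\overline Y$ is smooth along $C$ because $C$ is a smooth complete intersection of $Q_1,Q_2,Q_3$), while along $R$ the transverse $A_1$ of $T$ cuts out nodes. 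An ordinary surface node is locally $\mathbb C^2/\{\pm1\}$, so its double cover, being étale in codimension one, is resolved; hence $Y=q^{-1}(\overline Y)$ is a smooth surface with $K_Y=q^*K_{\overline Y}=\OO_Y$, i.e. a K3, carrying the deck involution $\iota_Y=\iota_{\tilde T}|_Y$ whose fixed locus is $Y\cap R'$, consisting of $\deg_{R'}\OO_{\tilde T}(2)=2\cdot4=8$ points (lying over $R\cap Q$). An involution of a K3 surface with exactly $8$ isolated fixed points is symplectic, so $(Y,\iota_Y)$ is a Nikulin surface, as claimed.

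The hard part will be the vanishing $H^0(T,\I_{C/T}(2)\otimes\mathcal{L})=0$ governing the anti--invariant part in (ii)--(iii) --- equivalently, the non--existence of an $\iota$--anti--invariant quadric through $\tilde C$ on $\tilde T$ for general moduli --- together with the maximality of $\mathrm{rank}\,M_0$ in (i). Both are genericity statements for which the cleanest route is probably to exhibit one explicit $[C,\eta,L]\in\cR_5^2$, for instance extracted from an auxiliary K3 surface with a suitable pencil structure, at which every open condition of the proposition holds; pinning down such a model is where I expect the genuine work to lie, since the rest is Bertini/Kodaira/adjunction or a formal consequence of $\mathrm{rank}\,M_0=3$.
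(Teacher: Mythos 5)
Your proposal follows essentially the same strategy as the paper: reduce everything, via adjunction on $\tilde T$, Kodaira vanishing, and the $\iota$-eigenspace decomposition, to two open genericity conditions --- $\mathrm{rank}\,M_0=3$ and the vanishing of the anti-invariant piece $H^0(T,\I_{C/T}(2)\otimes\mathcal L)$ --- and then verify both at one explicit $[C,\eta,L]\in\cR_5^2$. The paper does exactly this, packaging (i) and (ii) into a single open condition (surjectivity of $\mathrm{Sym}^2 H^0(\tilde C,L)\otimes\mathrm{Sym}^2 H^0(\tilde C,\iota^*L)\to H^0(\tilde C,K_{\tilde C}^{\otimes 2})$, equivalently $H^1(\I_{\tilde C/\PP^2\times\PP^2}(2))=0$) and citing \cite{V2} Section 4 for a triple at which both that and the smoothness of $\tilde T$ hold; for (iii) the paper does not decompose into eigenspaces but instead notes $h^0(T,\I_{C/T}(2))=3=h^0(\tilde T,\I_{\tilde C/\tilde T}(2))$ from the sequence $0\to\I_{T/\PP^4_+}(2)\to\I_{C/\PP^4_+}(2)\to\I_{C/T}(2)\to 0$, and that $q^*$ is injective, so it must be an isomorphism by dimension count. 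Your treatment of (iv) supplies useful details the paper's terse proof omits (smoothness of $Y$ above the $8$ nodes of $\bar Y$, the count $\deg_{R'}\OO_{\tilde T}(2)=8$). One substantive step you do not address, which was added to the published proof at the referee's prompting: after producing the K3 surface $Y$ with a symplectic involution and $8$ fixed points, the paper further verifies --- via \cite{vGS} Proposition 2.7 and the $\iota_Y^*$-eigenspace decomposition of $H^0(Y,\OO_Y(\tilde C))$ computed in \cite{V2} Proposition 5.2 --- that $\mathbb Z\cdot\OO_Y(\tilde C)\oplus E_8(-2)$ has index $2$ in $\mathrm{Pic}(Y)$, so that $[Y,\iota_Y]$ genuinely lies in $\F_5^{\mathfrak N}$. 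This lattice check is not required by the literal wording of (iv), but it is essential for the downstream conclusion that $\chi_5$ is dominant, so your argument stops one step short of what the written proof actually accomplishes.
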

\begin{proof}
We take cohomology in the following exact sequence
$$0\longrightarrow \I_{\tilde{C}/\PP^2\times \PP^2}(2)\longrightarrow \OO_{\PP^2\times \PP^2}(2)\longrightarrow K_{\tilde{C}}^{\otimes 2}\longrightarrow 0,$$ to note that
$h^0(\I_{\tilde{C}/\tilde{T}}(2))=3 (\Leftrightarrow H^1(\I_{\tilde{C}/\PP^2\times \PP^2}(2))=0$), if and only if the composed map
$$\mathrm{Sym}^2 H^0(\tilde{C}, L)\otimes \mathrm{Sym}^2 H^0(\tilde{C}, \iota^*L)\rightarrow H^0(\tilde{C}, L^{\otimes 2})\otimes H^0(\tilde{C}, \iota^*(L^{\otimes 2}))\rightarrow H^0(\tilde{C}, K_{\tilde{C}}^{\otimes 2})$$
is surjective. This is an open condition and a triple $(\tilde{C}\stackrel{f}\rightarrow C, L)\in \cR_g^2$ satisfying it, and for which moreover $\tilde{T}\subset \PP^2\times \PP^2$ is smooth, has been constructed in \cite{V2} Section 4. Finally, from the exact sequence
 $$0\longrightarrow \I_{T/\PP^4_+}(2)\longrightarrow \I_{C/\PP^4_+}(2)\longrightarrow \I_{C/T}(2)\rightarrow 0,$$
 we compute that $h^0(T, \I_{C, T}(2))=3$, therefore $q^*: H^0(T, \I_{C/T}(2))\rightarrow H^0(\tilde{T}, \I_{\tilde{C}/\tilde{T}}(2))$ is an isomorphism, based on dimension count.
 Part (iv) is a consequence of (i)-(iii). Assume that $\bar{Y}=T\cap Q$, where $Q\in H^0(\I_{C/\PP^4_+}(2))$. Then $Y=\tilde{T}\cap q^*(Q)$ and  $\bar{Y}$ is the quotient of $Y$ by the involution $\iota_Y$ obtained by restriction from $\iota\in \mbox{Aut}(\PP^2\times \PP^2)$. It follows that the covering $q:Y\rightarrow \bar{Y}$ is a Nikulin surface such that $C\subset \bar{Y}\subset \PP^4_+$. To conclude, we must check that for a general choice of $Y\in |\I_{\tilde{C}/\tilde{T}}(2)|$, the point $[Y, \iota_Y]$ gives rise to an element of $\F_5^{\mathfrak N}$, that is, using the notation of diagram (\ref{diagram}), that $\mbox{Pic}(S)=\Lambda_5$.  Proposition 2.7 from \cite{vGS} picks out two possibilities for $\mbox{Pic}(Y)$
 (or equivalently for $\mbox{Pic}(S)$), and we must check that $\mathbb Z\cdot \OO_Y(\tilde{C})\oplus E_8(-2)$ has index $2$ in $\mbox{Pic}(Y)$, see also \cite{GS} Corollary 2.2. \footnote{We are grateful to the referee for raising this point that we have initially overlooked.}

 This is achieved by finding the decomposition of $H^0(\OO_Y(\tilde{C}))$ into $\iota_Y^*$-eigenspaces. In the course of the proof of \cite{V2} Proposition 5.2 an example of a smooth quadratic section $Y\in |\I_{\tilde{C}/\tilde{T}}(2)|$ is constructed such that
 $$H^0(Y, \OO_Y(\tilde{C}))^{+}=q^*H^0(\bar{Y}, \OO_{\bar{Y}}(C)).$$
 In particular the $(+1)$-eigenspace of $H^0(Y,\OO_Y(\tilde{C}))$ is $6$-dimensional and invoking once more \cite{vGS} Proposition 2.7, we conclude that $[Y, \iota_Y]\in \F_5^{\mathfrak{N}}$.
\end{proof}

We close this subsection with an amusing result on a geometric divisor on $\cR_5$. For a Prym curve $[C, \eta]\in \cR_5$ and $L:=K_C\otimes \eta\in W^3_8(C)$, we observe that the vector spaces entering the multiplication map
$\nu_3(L):\mbox{Sym}^3 H^0(C, L)\rightarrow H^0(C, L^{\otimes 3})$
have the same dimension. The condition that $\nu_3(L)$ be not an isomorphism is divisorial in $\cR_5$. We have not been able to find a direct proof of the following equality of cycles on $\cR_5$, even though one inclusion is straightforward:
\begin{theorem}\label{cubics}
Let $[C, \eta]\in \cR_5$ be a Prym curve such that the Prym-canonical line bundle $K_C\otimes \eta$ is very ample. Then $\phi_{K_C\otimes\eta}: C\rightarrow \PP^3$ lies on a cubic surface if and only if $C$ is trigonal.
\end{theorem}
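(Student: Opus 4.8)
The plan is to analyze the two multiplication maps governing $\phi_{K_C\otimes \eta}:C\to \PP^3$, namely $\mathrm{Sym}^2 H^0(C,L)\to H^0(C,L^{\otimes 2})$ and $\nu_3(L):\mathrm{Sym}^3 H^0(C,L)\to H^0(C,L^{\otimes 3})$, where $L:=K_C\otimes \eta\in W^3_8(C)$. Since $\deg L=8=2g-2$ and $h^0(C,L)=4$, we have $\dim \mathrm{Sym}^3 H^0(C,L)=20=h^0(C,L^{\otimes 3})$, so ``$\phi_{K_C\otimes \eta}(C)$ lies on a cubic surface'' is equivalent to $\nu_3(L)$ failing to be an isomorphism, and one shows first that $C$ never lies on a quadric surface (otherwise $\phi_L(C)$ would lie on a quadric of degree $8$ in $\PP^3$, forcing a low-degree pencil incompatible with $L=K_C\otimes \eta$ being very ample on a genus $5$ curve — one uses that $C$ is not hyperelliptic and that $\eta$ is nontrivial of order $2$). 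So the statement reduces to: $\nu_3(L)$ is non-bijective $\iff$ $C$ is trigonal.

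For the easy inclusion ($\Leftarrow$): if $C$ is trigonal with $A\in W^1_3(C)$, then $K_C=A^{\otimes 2}\otimes B$ for a suitable $B$, and $K_C\otimes \eta$ inherits extra sections of twists that produce a syzygy; more cleanly, I would argue via the $K3$/ruled-surface geometry: a trigonal $C$ of genus $5$ lies on a rational normal scroll in its canonical embedding, and the Prym-canonical map factors through geometry that forces a cubic. Alternatively, a direct dimension count on $H^0(C, L^{\otimes 2}\otimes A^{\vee})$ using $\deg(L^{\otimes 2}\otimes A^{\vee})=13$ and Riemann-Roch, together with the base-point-free pencil trick for $A$, exhibits a nonzero element in the kernel of $\nu_3(L)$; I would spell this out using that $h^0(C,A)=2$ and $h^1(C,L^{\otimes 3}\otimes A^{\vee})=0$.

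For the hard inclusion ($\Rightarrow$), suppose $\phi_L(C)\subset F$ for an irreducible cubic surface $F\subset \PP^3$ (irreducibility follows since $C$ lies on no quadric or plane). Then $\deg\bigl(\phi_L(C)\bigr)=8$ on a cubic, and I would examine the possibilities for $F$: a smooth cubic surface is $\PP^2$ blown up at six points with $-K_F$ the hyperplane class, so $\phi_L(C)\in |{-}aK_F + \text{(exceptional corrections)}|$ with $(\phi_L(C))^2$ and $\phi_L(C)\cdot(-K_F)=8$ and arithmetic genus $5$; running adjunction $2p_a-2 = \phi_L(C)\cdot(\phi_L(C)+K_F)$ pins down the class, and restricting a suitable pencil of lines or conics on $F$ to $C$ produces a $\mathfrak g^1_3$, i.e.\ trigonality. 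If $F$ is singular, $F$ is a projection of a scroll or Veronese and the curve acquires a pencil of degree $\le 3$ directly from the ruling. The main obstacle I anticipate is the bookkeeping across the various types of (possibly singular) cubic surfaces $F$ — ruling out the cases where the linear series cut on $C$ by the natural pencils on $F$ has degree $\ge 4$ or is non-special in a way that does not force a $\mathfrak g^1_3$ — and in particular showing that the cubic $F$ is forced to be one for which $C$ meets a line (or a conic) of $F$ in exactly $3$ points; this is exactly the point the authors flag as lacking a direct proof, so I expect to fall back on the Prym-Nikulin/scroll description of Subsection 2.5 (realizing $C\subset \bar Y\subset T=\mathrm{Sec}(R)\subset \PP^4_+$, whose hyperplane section geometry links the cubic $\Sigma$ to the trigonal locus) to complete the argument.
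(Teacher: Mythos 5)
The paper's proof is a pure intersection-theoretic computation on $\rr_5$ and, as the authors say explicitly just before the statement, they were \emph{unable} to find a direct geometric argument of the kind you sketch. What the paper actually does: with $\mathfrak D_1$ the degeneracy locus of $\nu_3(K_C\otimes\eta)$ and $\mathfrak D_2$ the (closure of the) non-very-ample locus, they compute $[\mathfrak D_1]=c_1(\mathbb E_3)-c_1(\mathrm{Sym}^3\mathbb E_1)$ using the Chern class formula for $\mathbb E_i$, take $[\mathfrak D_2]$ from \cite{FL}, note $\mathfrak D_1-\mathfrak D_2\geq 0$ (your same observation), and find that the difference has class $\pi^*(8\lambda-\delta_0-\cdots)$. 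Then, arguing exactly as in Proposition \ref{septics}, any effective divisor on $\mm_5$ of slope $\leq 8$ not containing $\Delta_1,\Delta_2$ is a multiple of the trigonal Brill--Noether divisor $\mm_{5,3}^1$, so $\mathfrak D_1-\mathfrak D_2$ is supported on $\pi^*(\mm_{5,3}^1)$ and the theorem follows. No geometry of cubic surfaces enters at any point.

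Your proposal is a genuinely different, direct approach, and you have correctly located its weak point: in the direction ``$C$ lies on a cubic $\Rightarrow$ $C$ trigonal'' one must examine every normal, non-normal, and cone-type cubic surface $F$ that can carry a smooth degree-$8$ genus-$5$ curve with $\OO_C(1)=K_C\otimes\eta$ and extract a $\mathfrak g^1_3$ in each case, and there is no clean way to rule out the cases where the natural ruling of $F$ cuts out a pencil of degree $\geq 4$. You acknowledge you cannot close this, and your fallback to Subsection 2.5 does not obviously help: that construction lives in the \emph{canonical} space $\PP^4_+$ and produces a determinantal cubic threefold $T\subset\PP^4_+$, with no stated bridge to a cubic surface in the Prym-canonical $\PP^3$. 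So the hard implication in your write-up is a genuine gap, not a bookkeeping matter, and the paper's Chow-ring computation on $\rr_5$ is precisely what replaces it. (Your plan for the easy implication --- a base-point-free pencil trick with the $\mathfrak g^1_3$ to produce an element of $\ker\nu_3$, after first ruling out quadrics --- is plausible; the paper folds that direction into the cycle equality as well and never argues it separately.)
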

\begin{proof} Let $\mathfrak{D}_1$ be the locus of Prym curves whose Prym-canonical model lies on a cubic
$$\mathfrak{D}_1:=\{[C, \eta]\in \rr_5: \nu_3(\omega_C\otimes \eta): \mathrm{Sym}^3 H^0(C, \omega_C\otimes \eta)\stackrel{\ncong}\longrightarrow  H^0(C, \omega_C^{\otimes 3}\otimes \eta^{\otimes 3})\},$$
and $\mathfrak{D}_2$ the closure inside $\rr_5$ of the divisor $\{[C, \eta]\in \cR_5: \eta\in C_2-C_2\}$ of smooth Prym curves for which  $L:=K_C\otimes \eta\in W^3_8(C)$ is not very ample. Obviously, $\mathfrak{D}_1-\mathfrak{D}_2\geq 0$, for if $L$ is not very ample, then the multiplication map $\nu_3(L):\mbox{Sym}^3 H^0(C, L)\rightarrow H^0(C, L^{\otimes 3})$ cannot be an isomorphism. The class of $\mathfrak{D}_2$ can be read off \cite{FL} Theorem 5.2:
$$\mathfrak{D}_2\equiv 14\lambda-2(\delta_0^{'}+\delta_0^{''})-\frac{5}{2}\delta_0^{\mathrm{ram}}-\cdots \ \in \mathrm{Pic}(\rr_5).$$
For $i\geq 1$, let $\mathbb E_i$ be the vector bundle over $\rr_5$ with fibre $\mathbb E_i[C, \eta]=H^0(C, \omega_C^{\otimes i}\otimes \eta^{\otimes i})$ for every $[C, \eta]\in \rr_g$. One has the following formulas from \cite{FL} Proposition 1.7:
$$c_1(\mathbb E_i)={i\choose 2}(12\lambda-\delta_0^{'}-\delta_0^{''}-2\delta_0^{\mathrm{ram}})+\lambda-\frac{i^2}{4}\delta_0^{\mathrm{ram}}\in \mathrm{Pic}(\rr_g).$$ As a consequence, $\mathfrak{D}_1 \equiv c_1(\mathbb E_3)-c_1(\mathrm{Sym}^3 \mathbb E_1)\equiv 37\lambda-3(\delta_0+\delta_0^{''})-\frac{33}{4}\delta_0^{\mathrm{ram}}-\cdots\in \mathrm{Pic}(\rr_5),$
 therefore \ $\mathfrak{D}_1-\mathfrak{D}_2\equiv 8\lambda-(\delta_0^{'}+\delta_0^{''})-2\delta_0^{\mathrm{ram}}-\cdots=\pi^*(8\lambda-\delta_0-\cdots)\geq 0,$ where the terms left out are combinations of boundary divisors $\pi^*(\delta_i)$ with $i\geq 1$, corresponding to reducible curves. The only effective divisors $D\equiv a\lambda-b_0\delta_0-b_1\delta_{1}-b_2\delta_2$ on $\mm_5$ such that $\frac{a}{b_0}\leq 8$ and satisfying $\Delta_i\nsubseteq \mathrm{supp}(D)$ for $i=1, 2$,  are multiples of the trigonal locus $\mm_{5, 3}^1$ (the proof is identical to that of Proposition \ref{septics}). This proves that if $[C, \eta]\in \mathfrak{D}_1-\mathfrak{D_2}$, with $C$ being a smooth curve, then necessarily $[C]\in \cM_{5, 3}^1$, which finishes the proof.
\end{proof}

\section{A singular quadratic complex and a birational model for $\F_6^{\mathfrak{N}}$}

Let us set $V:=\mathbb C^{n+1}$ and denote by $\GG:=G(2, V)\subset \PP(\wedge^2 V)$ the Grassmannian of lines in $\PP(V)$. We fix once and for all a smooth quadric  $Q \subset \PP(V)$. The projective tangent bundle
$\mathbf P_Q := \mathbf P(T_Q)$ can be realized as the incidence correspondence
$$
\PP_Q=\bigl\{ (x, \ell) \in Q \times \GG: x\in \ell\subset \PP(T_x Q) \bigr\}.
$$
For each point $x \in Q$, the fibre $\mathbf P_{Q}(x)$ is the space of lines tangent to $Q$ at $x$. We introduce the projections
$p:\PP_Q \rightarrow \GG$ and $q:\PP_Q\rightarrow Q$, then set
$$W_Q:=p(\PP_Q)=\{\ell\in \GG: \ell \ \mbox{ is tangent to the quadric } Q\}.$$ Note that $W_Q$ contains the Hilbert scheme of lines in $Q$, which we denote by
$V_Q\subset W_Q$. It is well-known that $V_Q$ is smooth, irreducible and  $\mbox{dim}(V_Q)=2n-5$. The restriction $p_{| p^{-1}(W_Q-V_Q)}$ is an isomorphism and
$E_Q := p^{-1}(V_Q)\subset \PP_Q$ is the exceptional divisor of $p$.

\begin{proposition}\label{quadraticcomplex}
The variety $W_Q$ is a quadratic complex of lines in $\GG$. Its singular locus is equal to $V_Q$ and each point of $V_Q$ is an ordinary double point of $W_Q$.
\end{proposition}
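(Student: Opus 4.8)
The plan is to realise $W_Q$ as an intersection $\GG\cap\widehat Q$ with a smooth quadric hypersurface $\widehat Q\subset\PP(\wedge^2 V)$, and then to determine where the two smooth varieties $\GG$ and $\widehat Q$ fail to meet transversally. Let $\beta$ be the nondegenerate symmetric bilinear form on $V$ with $Q=\{[v]:\beta(v,v)=0\}$, and let $\wedge^2\beta$ be the induced form on $\wedge^2 V$, so that $(\wedge^2\beta)(v_1\wedge w_1,v_2\wedge w_2)=\beta(v_1,v_2)\beta(w_1,w_2)-\beta(v_1,w_2)\beta(w_1,v_2)$. For a decomposable vector $\omega=v\wedge w$ one gets $(\wedge^2\beta)(\omega,\omega)=\beta(v,v)\beta(w,w)-\beta(v,w)^2$, which is exactly the discriminant of the restriction of $\beta$ to the plane $\langle v,w\rangle$; hence $\langle v,w\rangle$ is tangent to (or contained in) $Q$ if and only if $(\wedge^2\beta)(\omega,\omega)=0$. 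Thus, putting $\widehat Q:=\{(\wedge^2\beta)(x,x)=0\}\subset\PP(\wedge^2 V)$, we obtain $W_Q=\GG\cap\widehat Q$, which is the first assertion; and since $\wedge^2\beta$ is again nondegenerate (the determinant of the second exterior power of an invertible matrix is a power of its determinant), $\widehat Q$ is a smooth quadric.

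Because $\GG$ and $\widehat Q$ are both smooth, a point $[\omega]\in W_Q$ is singular precisely when $T_{[\omega]}\GG\subseteq T_{[\omega]}\widehat Q$, i.e.\ when $(\wedge^2\beta)(\omega,\,\cdot\,)$ vanishes on the affine tangent space $v\wedge V+w\wedge V$ of $\GG$ at $\omega=v\wedge w$. From the formula above, $(\wedge^2\beta)(v\wedge w,v\wedge u)=\beta(v,v)\beta(w,u)-\beta(v,u)\beta(v,w)$ and $(\wedge^2\beta)(v\wedge w,w\wedge u)=\beta(v,w)\beta(w,u)-\beta(v,u)\beta(w,w)$. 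If the line $\langle v,w\rangle$ is not contained in $Q$ I would pick $v,w$ with $\beta(v,v)=\beta(v,w)=0\neq\beta(w,w)$; the second expression is then $-\beta(w,w)\,\beta(v,u)$, which is nonzero for a suitable $u$ since $\beta$ is nondegenerate, so $[\omega]$ is a smooth point. If instead $\langle v,w\rangle\subseteq Q$, then $\beta(v,v)=\beta(w,w)=\beta(v,w)=0$ and both expressions vanish identically, so $[\omega]$ is singular. This gives $\mathrm{Sing}(W_Q)=V_Q$.

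It then remains to show that $W_Q$ has an ordinary double point, transverse to $V_Q$, at each point of $V_Q$. I would fix $[\ell_0]\in V_Q$ and choose a basis $e_0,e_1,f_0,f_1,g_1,\dots,g_{n-3}$ of $V$ with $\ell_0=\langle e_0,e_1\rangle$, with $\beta$ hyperbolic on $\langle e_0,f_0\rangle$ and on $\langle e_1,f_1\rangle$, nondegenerate on $\langle g_1,\dots,g_{n-3}\rangle$, and all remaining pairings zero. In the Plücker chart of $\GG$ around $[\ell_0]$ the generic $2$-plane is spanned by $v=e_0+a_0f_0+a_1f_1+\sum a_i'g_i$ and $w=e_1+b_0f_0+b_1f_1+\sum b_i'g_i$; a short computation gives $\beta(v,v)=2a_0+\gamma(a')$, $\beta(w,w)=2b_1+\gamma(b')$, $\beta(v,w)=a_1+b_0+\gamma(a',b')$, where $\gamma$ is the form attached to the $g_i$, so the local equation of $W_Q$ is $F=\bigl(2a_0+\gamma(a')\bigr)\bigl(2b_1+\gamma(b')\bigr)-\bigl(a_1+b_0+\gamma(a',b')\bigr)^2$. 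Now $A_0:=2a_0+\gamma(a')$, $B_1:=2b_1+\gamma(b')$, $U:=a_1+b_0+\gamma(a',b')$ have linearly independent differentials at $[\ell_0]$ and hence extend to a local analytic coordinate system on $\GG$, and in it $F=A_0B_1-U^2$ identically. Therefore, near $[\ell_0]$, $W_Q$ is analytically isomorphic to the product of the surface ordinary double point $\{xy=z^2\}\subset\mathbb C^3$ with a smooth $(2n-5)$-fold; in particular it has a transverse ordinary double point along $V_Q$, and $\mathrm{Sing}(W_Q)$ is locally $\{A_0=B_1=U=0\}$, a smooth $(2n-5)$-fold, in agreement with the previous paragraph.

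The hard part will be this last step. The Hessian of $F$ has rank $3$, not the maximal rank $2n-2=\dim\GG$, so one has to be careful about the meaning of "ordinary double point" here: the point is that this rank-$3$ form together with the $(2n-5)$ kernel directions — exactly those tangent to $V_Q=\mathrm{Sing}(W_Q)$ — assembles into $\{xy=z^2\}\times\mathbb C^{2n-5}$, which is the correct model along a positive-dimensional singular locus. Executing this demands choosing the basis of $V$ correctly adapted to the isotropic line $\ell_0$ and checking the coordinate change; the computation itself is elementary. Alternatively one can sidestep coordinates: $W_Q\setminus V_Q$ is smooth because $p$ restricts there to an isomorphism from an open subset of the smooth variety $\PP_Q$, while for $[\ell_0]\in V_Q$ one has $p^{-1}([\ell_0])=\ell_0\cong\PP^1$; since $p$ realises $\PP_Q$ as the blow-up of $W_Q$ along the smooth codimension-two locus $V_Q$ with irreducible rational exceptional fibres, the singularity must be a transverse ordinary double point.
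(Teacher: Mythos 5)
Your first two paragraphs reproduce the paper's argument essentially verbatim: realize $W_Q=\GG\cap\widehat Q$ where $\widehat Q$ is the quadric cut out by $\wedge^2\beta$ (the paper's $\nu_2(Q)$), then detect $\mathrm{Sing}(W_Q)$ by testing when $(\wedge^2\beta)(\omega,\,\cdot\,)$ annihilates the Pl\"ucker tangent space $v\wedge V + w\wedge V$. Where you diverge is the final claim that each point of $V_Q$ is an ordinary double point: the paper settles this in one line, ``since $W_Q$ is a quadratic complex, each point $\ell\in V_Q$ has multiplicity $2$'' --- which really only gives a multiplicity bound and leaves the transverse local model implicit. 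Your third paragraph supplies what the paper omits: an adapted Pl\"ucker chart at $[\ell_0]\in V_Q$ in which the defining equation becomes $A_0B_1-U^2=0$ with $A_0,B_1,U$ part of a local coordinate system, exhibiting $W_Q$ analytically as $\{xy=z^2\}\times\mathbb C^{2n-5}$ near $\ell_0$. That is a genuine improvement in rigor and checks out (the choice of basis adapted to the isotropic plane $\ell_0$, the expansions of $\beta(v,v)$, $\beta(w,w)$, $\beta(v,w)$, and the independence of $dA_0,dB_1,dU$ at the origin are all correct). The alternative blow-up sketch at the end is conceptually reasonable but looser: that $p:\PP_Q\to W_Q$ is precisely the blow-up along $V_Q$ (rather than merely a resolution with $\PP^1$-fibers over it) is itself something to argue, and the coordinate computation you already gave is the cleaner route, so you could drop that last remark without loss.
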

\begin{proof}
Let $Q:V\rightarrow \mathbb C$ be the quadratic form whose zero locus is the quadric hypersurface also denoted by $Q\subset \PP(V)$, and $\tilde{Q}:V\times V\rightarrow \mathbb C$ the associated bilinear map. We define the bilinear map $\nu_2(\tilde{Q}):\wedge^2 V\times \wedge^2 V\rightarrow \mathbb C$ by the formula
$$\nu_2(\tilde{Q})(u\wedge v, s\wedge t):=\tilde{Q}(u, s)\tilde{Q}(v, t)-\tilde{Q}(v, s)\tilde{Q}(u, t)$$
for $u, v, s, t\in V$, and denote by $\nu_2(Q):\wedge^2 V\rightarrow \mathbb C$ the induced quadratic form.

For fixed points $x=[u]\in Q$ and $y=[v]\in \PP(V)$, we observe that the line $\ell=\langle x, y \rangle $ is tangent to $Q$ if and only if $\tilde{Q}(u, v)=0\Leftrightarrow \nu_2(Q)(u\wedge v)=0$. Therefore $W_Q=\GG\cap \nu_2(Q)$ is a quadratic line complex in $\GG$, being the vanishing locus of $\nu_2(Q)$.

Keeping the same notation, a point $\ell=[u\wedge v]\in W_Q$ is a singular point, if and only if the linear form $\nu_2(\tilde{Q})(u\wedge v,\ -)$ vanishes along
$\PP(T_{\ell}\  \GG)$. Since $\PP(T_{\ell}\  \GG)$ is spanned by the Schubert cycle $\{m\in \GG: m\cap \ell\neq \emptyset\}$, any tangent vector in $T_{\ell}(\GG)$ has a representative of the form $u\wedge a-v\wedge b$, where $a, b\in V$. We obtain that $[u\wedge v]\in \mathrm{Sing}(W_Q)$ if and only if $Q(v, v)=0$, that is, $\ell=[u\wedge v]\in V_Q$. Since $W_Q$ is a quadratic complex, each point $\ell\in V_Q$ has multiplicity $2$.
\end{proof}
The map $p:\PP_Q\rightarrow W_Q$ appears as a desingularization of the quadratic complex $W_Q$. We shall compute the class of the exceptional divisor $E_Q$ of $\PP_Q$.
Let $H:=p^*(\OO_{\GG}(1))$ be the class of the family of tangent lines to $Q$ intersecting a fixed $(n-2)$-plane in $\PP(V)$ and $B:=q^* (\OO_Q(1))\in \mathrm{Pic}(\PP_Q)$. Furthermore,  we consider the class  $h\in NS_1(\PP_Q)$ of the pencil of tangent lines to $Q$ with center a given point $x \in Q$. It is clear that
$$
h\cdot H = 1 \ \  \mbox{ and } \ \ h\cdot B = 0.
$$
If $\ell\in V_Q$ is a fixed line, let $s\in NS_1(\PP_Q)$ be the class of the family $\{(x,\ell): x \in \ell \}$.  Then
$$
s\cdot H = 0 \ \mbox{ and } \ s\cdot B = 1.
$$
\begin{lemma} The linear equivalence $E_Q \equiv 2H - 2B$ in $\mathrm{Pic}(\PP_Q)$ holds. In particular, the class $E_Q$ is divisible by 2 and it is the branch divisor of a double cover
$$
f: \tilde {\mathbf P}_Q \to \mathbf P_Q.
$$
\end{lemma}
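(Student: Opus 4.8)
The plan is to pin down the class $E_Q$ inside $\mathrm{Pic}(\PP_Q)$ by intersecting it with the two curve classes $h$ and $s$ introduced above. First I would record that, since $Q$ is a smooth quadric of dimension at least $3$, one has $\mathrm{Pic}(Q)=\ZZ\cdot\OO_Q(1)$, so that $\mathrm{Pic}(\PP_Q)=\mathrm{Pic}(\PP(T_Q))$ is free of rank $2$. The class $B=q^*\OO_Q(1)$ is one generator; and since $H=p^*\OO_\GG(1)$ restricts to the hyperplane class on each fibre $q^{-1}(x)=\PP_Q(x)\cong\PP^{n-2}$ (which $p$ embeds linearly into $\GG$), the pair $\{H,B\}$ is a $\ZZ$-basis of $\mathrm{Pic}(\PP_Q)$. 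The relations $h\cdot H=1$, $h\cdot B=0$, $s\cdot H=0$, $s\cdot B=1$ recorded above then say exactly that $\{h,s\}\subset NS_1(\PP_Q)$ is the dual basis, so every divisor $D$ satisfies $D\equiv (D\cdot h)\,H+(D\cdot s)\,B$. The whole statement therefore reduces to computing the two integers $E_Q\cdot h$ and $E_Q\cdot s$.

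To compute $E_Q\cdot h$ I would fix a general point $x\in Q$ and work inside the fibre $\PP_Q(x)\cong\PP^{n-2}$, which parametrizes the lines through $x$ lying in the embedded tangent hyperplane $\mathbb T_xQ$ to $Q$ at $x$. Its trace on $E_Q=p^{-1}(V_Q)$ is the set of such lines that moreover lie on $Q$, i.e. the lines through the vertex $x$ of the quadric cone $Q\cap\mathbb T_xQ$; projecting this cone away from $x$ realizes $E_Q\cap\PP_Q(x)$ as a quadric hypersurface in $\PP^{n-2}$ (smooth, since $x$ is a smooth point of $Q$), hence of degree $2$. As $h$ is represented by a general line of $\PP_Q(x)$, it meets this quadric transversally in $2$ points, so $E_Q\cdot h=2$.

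To compute $E_Q\cdot s$ I would use that $s=p^{-1}(\ell)$ for a fixed $\ell\in V_Q$, so $s\subset E_Q$ and one really has to evaluate $\deg\OO_{\PP_Q}(E_Q)|_s$. By Proposition \ref{quadraticcomplex} the point $\ell$ is an ordinary double point of $W_Q$, transverse to the smooth locus $V_Q$, and (as recalled in the introduction) $p$ is the blow-up of $W_Q$ along $V_Q$. Restricting to a general transverse slice $W'\subset W_Q$ through $\ell$, the surface $W'$ acquires an $A_1$-singularity at $\ell$, its proper transform $p^{-1}(W')$ is the minimal resolution, and $E_Q\cap p^{-1}(W')=s$ is the exceptional $(-2)$-curve on that smooth surface. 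Hence $\OO_{\PP_Q}(E_Q)|_{p^{-1}(W')}=\OO_{p^{-1}(W')}(s)$ and $E_Q\cdot s=s^2=-2$.

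Putting the two numbers together gives $E_Q\equiv 2H-2B=2(H-B)$ in $\mathrm{Pic}(\PP_Q)$, so $\tfrac12 E_Q=H-B$ is integral; writing $\OO_{\PP_Q}(E_Q)=\mathcal L^{\otimes 2}$ with $\mathcal L:=\OO_{\PP_Q}(H-B)$, the section of $\OO_{\PP_Q}(E_Q)$ cutting out the effective divisor $E_Q$ defines, by the standard cyclic cover construction, a double cover $f:\tilde\PP_Q\to\PP_Q$ branched precisely along $E_Q$. The step I expect to need the most care is the identification $E_Q\cdot s=-2$: it rests on knowing that $W_Q$ has exactly a transverse node along $V_Q$ and that $\PP_Q$ is its blow-up along $V_Q$, both supplied by Proposition \ref{quadraticcomplex} and the surrounding discussion. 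If one preferred to sidestep this local analysis, an alternative is to pull back the two tautological exact sequences on $\GG=G(2,V)$ along $p$, thereby expressing $H$ explicitly in terms of a relative hyperplane class and $B$, which yields $E_Q\equiv 2(H-B)$ after a short direct computation.
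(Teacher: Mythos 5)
Your proof is correct and follows the same strategy as the paper: pin down $E_Q$ by computing the two intersection numbers $E_Q\cdot h$ and $E_Q\cdot s$ against the dual basis of curve classes, obtaining $2$ and $-2$ respectively, and then pass to the $2$-to-$1$ cyclic cover. The only micro-variation is in the computation $E_Q\cdot s=-2$, where the paper identifies $s=p^{-1}(\ell)$ with a conic inside $\PP\bigl(N_{V_Q/\GG}(\ell)\bigr)\cong\PP^2$ and uses the tautological $\OO(-1)$ of the exceptional divisor of the blow-up of $\GG$ along $V_Q$, whereas you slice by a transverse surface and recognize $s$ as the exceptional $(-2)$-curve of the minimal resolution of an $A_1$-singularity; these are equivalent readings of the same local geometry supplied by Proposition~\ref{quadraticcomplex}.
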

\begin{proof} To compute the class of $E_Q$ it suffices to
compute $h\cdot E_Q$ and $s\cdot E_Q$. First we note that $h\cdot E_Q = 2$. Indeed a pencil of tangent lines to $Q$ through a fixed point $x \in Q$ has two elements which are in $Q$.
Finally, recalling that $V_Q=\mathrm{Sing}(W_Q)$ consists of ordinary double points,  we obtain that $s\cdot E_Q = -2$, since $p^{-1}(\ell)$ is a conic inside $\PP\bigl(N_{V_Q/\GG}(\ell)\bigr)$.
\end{proof}
\vskip 4pt
\subsection{A birational model for $\F_6^{\mathfrak{N}}$.} Let us now specialize to the case $n=4$, that is,
$$Q\subset \PP^4, \ \ \GG=G(2, 5)\subset \PP^9 \mbox{ and } \ \mbox{dim}(W_Q)=5.$$ The class of $V_Q$ equals $4\sigma_{2, 1}\in H^6(\GG, \mathbb Z)$ see \cite{HP} p. 366, therefore $\mbox{deg}(V_Q)=4\sigma_{2, 1}\cdot \sigma_1^3=8$. This can also be seen by recalling that $V_Q$ is isomorphic to the Veronese $3$-fold $\nu_2(\PP^3)\subset \PP^9$.

The double covering $f: \tilde {\mathbf P}_Q \rightarrow \PP_Q$ constructed above has a transparent projective interpretation.
For  $(x, \ell)\in \PP_Q$, we denote by $\Pi_{\ell} \in G(3, V)$ the \emph{polar space} of $\ell$ defined as the base locus of the pencil of polar hyperplanes $\{z\in \PP(V): \tilde{Q}(y, z)=0\}_{y\in \ell}$. Clearly $x\in \Pi_{\ell}\subset \PP(T_x Q)$ and $Q\cap \Pi_{\ell}$ is a conic of rank at most $2$ in $\Pi_{\ell}$. When $\ell\in W_Q-V_Q$, the quadric has rank exactly $2$ which corresponds to a pair of lines $\ell_1+\ell_2$ with $\ell_1, \ell_2\in V_Q$. The double cover is induced by the map from the parameter space of the lines themselves.

In the next statement we shall keep in mind the notation of diagram (\ref{diagram}):
\begin{proposition}\label{genul6}
A general codimension $3$ linear section $\bar{Y}:=\Lambda\cap W_Q$ of the quadratic complex $W_Q$ where $\Lambda\in G(7, \wedge ^2 V)$, is a $8$-nodal $K3$ surface with desingularization $$p:S:=p^{-1}(\bar{Y})\rightarrow \bar{Y}.$$ The triple $[S, \OO_S(H-B), \OO_S(H)]\in \F_6^{\mathfrak{N}}$ is a Nikulin surface of genus $6$ and the induced double cover is the restriction $f:\tilde{S}:=f^{-1}(S)\rightarrow S$.
\end{proposition}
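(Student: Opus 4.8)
The plan is to recognize $S=p^{-1}(\bar Y)$ as a very simple complete intersection and then read off the Nikulin structure. Since $p\colon\PP_Q\to W_Q$ is already birational with $p^{*}\OO_{\GG}(1)=\OO_{\PP_Q}(H)$, cutting $W_Q$ by the codimension $3$ subspace $\Lambda$ amounts to intersecting the smooth $5$-fold $\PP_Q$ with three general members of the base-point-free system $|H|$, so $S$ is a smooth surface for general $\Lambda$ by Bertini. Adjunction together with $\omega_{W_Q}=\OO_{W_Q}(-3)$ (by adjunction on $\GG=G(2,5)$, whose canonical class is $\OO_{\GG}(-5)$, $W_Q$ being a quadric section) and the fact that $p$ is crepant — it resolves the ordinary double points of $W_Q$ along $V_Q$, cf. Proposition \ref{quadraticcomplex} — give $K_{\PP_Q}=\OO_{\PP_Q}(-3H)$ and hence $K_S=\OO_S$; the Koszul resolution of $\OO_S$ on $\PP_Q$, together with $R^{i}p_{*}\OO_{\PP_Q}=0$ for $i>0$ and the arithmetic Cohen--Macaulayness of the linear section $W_Q\subset\PP^{9}$, forces $H^{1}(S,\OO_S)=0$. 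Thus $S$ is a smooth $K3$ surface, and its image $\bar Y=p(S)$ is obtained from $S$ by contracting the curves $N_{i}:=p^{-1}(\ell_{i})$ over the eight points $\{\ell_{1},\dots,\ell_{8}\}=\Lambda\cap V_Q$ (eight, since $\deg V_Q=8$); each $N_{i}$ is a smooth rational $(-2)$-curve as $p$ resolves an ordinary double point there, and the $N_{i}$ are pairwise disjoint, so $\bar Y$ is an $8$-nodal $K3$ surface.

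Next I would extract the double cover. For general $\Lambda$ the divisor $E_Q$ meets $S$ transversally in $N_{1}+\cdots+N_{8}$, and since $E_Q\equiv 2H-2B$ on $\PP_Q$ the class $\OO_S(N_{1}+\cdots+N_{8})$ equals $2e$ with $e:=\OO_S(H-B)$. Restricting the double cover $f\colon\widetilde{\PP}_Q\to\PP_Q$ branched along $E_Q=2e$ gives a double cover $f\colon\tilde S:=f^{-1}(S)\to S$ branched exactly along $N_{1}+\cdots+N_{8}=2e$; as the branch curves are smooth and disjoint $\tilde S$ is smooth, and it is connected because $e$ is non-torsion (indeed $e^{2}=-4$, computed below). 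Hence $(S,f)$ is a Nikulin surface, blowing down the eight $(-1)$-curves $f^{-1}(N_{i})\subset\tilde S$ yields a smooth $K3$ surface $Y$ carrying a Nikulin involution, and we are exactly in the situation of diagram (\ref{diagram}).

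It remains to place $[S,e,\OO_S(H)]$ in $\F_{6}^{\mathfrak N}$. The intersection numbers on $S$ are elementary: $H^{2}=\deg W_Q=10$, $H\cdot N_{i}=0$ (because $p$ contracts $N_{i}$), $N_{i}^{2}=-2$ and $N_{i}\cdot N_{j}=0$ for $i\neq j$; consequently $e=\tfrac12\sum_{i}N_{i}$ satisfies $e^{2}=-4$, $e\cdot N_{i}=-1$ and $H\cdot e=\tfrac12\sum_{i}H\cdot N_{i}=0$. Therefore $H,N_{1},\dots,N_{8}$ span inside $\mathrm{Pic}(S)$ a copy of $\Lambda_{6}=\ZZ\,\mathfrak c\oplus\mathfrak N$ via $\mathfrak c\mapsto H$, $\mathfrak n_{i}\mapsto N_{i}$, with $H$ nef, being pulled back from the very ample $\OO_{\GG}(1)$. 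To finish I would check that this embedding $j\colon\Lambda_{6}\hookrightarrow\mathrm{Pic}(S)$ is primitive for general $\Lambda$, arguing as in the proof of Proposition \ref{gen5}: by \cite{vGS} Proposition 2.7 there are only two possibilities for $\mathrm{Pic}(S)$, distinguished by the $\iota_{Y}^{*}$-eigenspace decomposition of $H^{0}(Y,\OO_{Y}(\hat C))$, where $\hat C\subset Y$ is the image of $\tilde C_{0}:=f^{-1}(C_{0})$ for a general smooth $C_{0}\in|H|$ (disjoint from the $N_{i}$, since $H\cdot N_{i}=0$). As $\hat C$ avoids the fixed points of $\iota_{Y}$, one has $H^{0}(Y,\OO_{Y}(\hat C))=H^{0}(\tilde S,f^{*}\OO_S(H))=H^{0}(S,\OO_S(H))\oplus H^{0}(S,\OO_S(H)\otimes e^{\vee})$, and the second summand is $H^{0}(S,\OO_S(B))$ because $H-e\equiv B$; by Riemann--Roch on the $K3$ surface $S$, using $H^{2}=10$, $B^{2}=(H-e)^{2}=6$ and the vanishing of $H^{1}$, these eigenspaces have dimensions $7$ and $5$. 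This singles out the orthogonal sum, establishes primitivity of $j$, and identifies $[S,e,\OO_S(H)]$ as a genus $6$ Nikulin surface with Nikulin covering $f\colon\tilde S\to S$.

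The hard part is this last step: ruling out that $\ZZ H\oplus\mathfrak N$ sits inside an index-$2$ overlattice of $\mathrm{Pic}(S)$ for the general section. It is precisely here that the concrete double cover produced by the construction must be fed into the van Geemen--Sarti dichotomy and matched against the explicit cohomology computation above; the consistency of the dimension count $\dim\bigl(G(7,\wedge^{2}V)^{\mathrm{ss}}\dblq\mathrm{Aut}(Q)\bigr)=21-10=11=\dim\F_{6}^{\mathfrak N}$ serves as an independent check that the construction really yields Nikulin surfaces of genus $6$ with general moduli.
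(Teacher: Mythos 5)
Your proof is correct and follows the paper's approach, but it is considerably more careful than the paper's actual argument, which after the adjunction and Bertini steps simply says ``The remaining statements are immediate.'' The intersection numbers you compute ($H^2=10$, $H\cdot N_i=0$, $N_i^2=-2$, $e^2=-4$, $B^2=6$) are all right, and the identification $E_Q|_S=N_1+\cdots+N_8\equiv 2(H-B)$ correctly produces the $2$-torsion class $e$. What you add, and what the paper skips, is the verification that $\ZZ H\oplus\mathfrak N\hookrightarrow\mathrm{Pic}(S)$ is primitive, via the $\iota_Y^*$-eigenspace decomposition $H^0(Y,\OO_Y(\hat C))=H^0(S,\OO_S(H))\oplus H^0(S,\OO_S(B))$ of dimensions $7+5$ and the van Geemen--Sarti dichotomy (\cite{vGS} Proposition~2.7). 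This mirrors exactly the step the paper does spell out for genus $5$ in Proposition~\ref{gen5} (the one a referee flagged there), so filling it in for genus $6$ is appropriate rather than redundant; your argument also makes explicit that one can equivalently cut the resolved five-fold $\PP_Q$ by three members of $|H|$ to get the smooth $S$ directly, rather than cutting $W_Q$ and then resolving, which is a clean way to see $K_S=\OO_S$ and $h^1(\OO_S)=0$.
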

\begin{proof} We fix a general $6$-plane $\Lambda\in G(7, \wedge ^2 V)$. Since $K_{W_Q}=\OO_{W_Q}(-3H)$, by adjunction we obtain that $\bar{Y}:=\Lambda\cap W_Q$ is a $K3$ surface. From Bertini's theorem, $\bar{Y}$ has ordinary double points at the $8$ points of intersection $\Lambda\cap V_Q$.
General hyperplane sections of $C\in |\OO_{\bar{Y}}(H)|$, viewed as codimension $4$ linear sections of $W_Q$, are canonical curves of genus $6$, endowed with a line bundle of order $2$ given by $\OO_C(H-B)$. The remaining statements are immediate.
\end{proof}
It turns out that the general Nikulin surface of genus $6$ arises in this way:
\begin{theorem}\label{param6}
Let $V:=\mathbb C^5$ and $Q\subset \PP(V)$ be a smooth quadric. One has a dominant map
$$\varphi: G(7, \wedge^2 V)^{\mathrm{ss}}\dblq \mathrm{Aut}(Q)\dashrightarrow \F_6^{\mathfrak{N}},$$ given by
$\varphi(\Lambda):=\bigl[S:=p^{-1}(\Lambda\cap W_Q), \ \OO_S(H-B), \ \OO_S(H)\bigr]$.
\end{theorem}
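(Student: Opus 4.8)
The plan is to deduce the dominance of $\varphi$ from a dimension count together with a reconstruction, starting from a general polarized Nikulin surface of genus $6$, of the quadric $Q$ and of the linear space $\Lambda$, both up to the action of $\mathrm{Aut}(Q)$. First I would record the relevant dimensions: $\dim G(7,\wedge^2 V)=7\cdot 3=21$ and $\dim\mathrm{Aut}(Q)=10$. Moreover the $\mathrm{Aut}(Q)$-action on a general $\Lambda\in G(7,\wedge^2 V)$ has finite stabilizer: under the isomorphism $\wedge^2 V\cong\mathrm{Sym}^2 U$ induced by $Q$, the orthogonal complement of $\Lambda$ corresponds to a general net of quadrics in $\PP(U^{\vee})$, whose base locus is a set of eight points in general linear position, hence with trivial stabilizer in $\mathrm{PGL}(U)$. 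Therefore the GIT quotient $G(7,\wedge^2 V)^{\mathrm{ss}}\dblq\mathrm{Aut}(Q)$ is irreducible of dimension $21-10=11=\dim\F_6^{\mathfrak{N}}$; since $\F_6^{\mathfrak{N}}$ is irreducible, it suffices to prove that $\varphi$ is generically finite. In fact I will show that the general fibre is a single $\mathrm{Aut}(Q)$-orbit, so that $\varphi$ is even birational, which matches the statement in the introduction.

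Next I would recover the quadric. Let $[S,e,\OO_S(C)]\in\F_6^{\mathfrak{N}}$ be general, so that $\mathrm{Pic}(S)=\Lambda_6$, and put $\mathcal B:=\OO_S(C-e)$; then $\mathcal B^2=6$ and $\mathcal B\cdot N_i=1$. By Lemma \ref{easyfacts}(iii) the linear system $|\mathcal B|$ defines an embedding $\phi_{\mathcal B}\colon S\hookrightarrow\PP^4=\PP(V)$ with $V:=H^0(S,\mathcal B)^{\vee}$, carrying the curves $N_i$ to eight pairwise disjoint lines. I claim that the image lies on a \emph{unique} quadric $Q$. Indeed, a general member of $|\mathcal B|$ is a smooth curve of genus $4$ which is non-hyperelliptic: by a Cauchy--Schwarz estimate on $\mathrm{Pic}(S)=\Lambda_6$, exactly as in the proof of Lemma \ref{easyfacts}(iv), there is no class $E$ with $E^2=0$, $E\not\equiv 0$ and $E\cdot\mathcal B=2$. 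Consequently $\phi_{\mathcal B}$ is projectively normal by \cite{SD}, so that
$$
h^0\bigl(\PP^4,\I_{S/\PP^4}(2)\bigr)=h^0\bigl(\OO_{\PP^4}(2)\bigr)-h^0(S,2\mathcal B)=15-14=1.
$$
Smoothness of $Q$ is an open condition, verified on the surfaces built in Proposition \ref{genul6}. Thus the pair $(V,Q)$, and with it $\GG=G(2,V)$, $W_Q$ and $V_Q$, is canonically attached to $[S,e,\OO_S(C)]$ up to the action of $\mathrm{Aut}(Q)$.

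It then remains to recover $\Lambda$, and here I would use Mukai bundles. A point of $\varphi^{-1}\bigl([S,e,\OO_S(C)]\bigr)$ amounts to a realization of $S$ as the desingularization of a linear section $W_Q\cap\PP(\Lambda)$ compatible with the polarization; the resulting map $S\to W_Q\subset G(2,V)$ is classified by a rank two bundle $\mathcal E$ whose fibre $\mathcal E^{\vee}|_s$ is the plane spanned by the line $p(s)\subset V$, so that $\det\mathcal E=\OO_S(C)$ and $h^0(\mathcal E)=5$. Here the composite $q|_S$ is the map $\phi_{\mathcal B}$, and the point $q(s)\in p(s)$ provides an inclusion $\OO_S(-\mathcal B)\hookrightarrow\mathcal E^{\vee}$, so that $\mathcal E$ fits into an extension
$$
0\longrightarrow\OO_S(e)\longrightarrow\mathcal E\longrightarrow\OO_S(\mathcal B)\longrightarrow 0,
$$
which forces $c_2(\mathcal E)=e\cdot\mathcal B=e\cdot C-e^2=4$. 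Hence the Mukai vector $v(\mathcal E)=(2,C,3)$ is primitive with $\langle v,v\rangle=C^2-12=-2$, so $\mathcal E$ is the unique spherical (hence rigid) bundle on $S$ with this invariant. Therefore the classifying map $S\to G(2,V)$ — and with it the span $\Lambda\subset\wedge^2 V$ of the image of $S$ in $\PP(\wedge^2 V)$ — is uniquely determined up to $\mathrm{Aut}(Q)$. This shows that $\varphi^{-1}\bigl([S,e,\OO_S(C)]\bigr)$ is a single orbit, and together with the first paragraph it proves that $\varphi$ is dominant.

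I expect the third step to be the main obstacle: one has to verify that the spherical bundle $\mathcal E$ with $v(\mathcal E)=(2,C,3)$ is globally generated, simple, with $h^0(\mathcal E)=5$, and that its classifying morphism reproduces $W_Q\cap\PP(\Lambda)$ with the correct scheme structure rather than an excess linear section; as all of these are open conditions, it is enough to check them on the Nikulin surfaces of Proposition \ref{genul6}, where everything is explicit. On the GIT side one must ensure that the stabilizer computation in the first paragraph is uniform enough to yield the equality $\dim\bigl(G(7,\wedge^2 V)^{\mathrm{ss}}\dblq\mathrm{Aut}(Q)\bigr)=11$, and the projective normality used in the second step, though routine, should be stated with care since it is what pins down $Q$ uniquely.
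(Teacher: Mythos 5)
Your proposal is correct in outline and takes a genuinely more detailed, more constructive route than the paper. The paper's own proof is extremely short: it observes (i) that $\mathrm{Aut}(Q)$ acts on $\PP(\wedge^2 V)$ preserving $W_Q$ and $V_Q$, so $\varphi$ descends to the GIT quotient; (ii) that $\F_6^{\mathfrak{N}}$ is irreducible; (iii) that polarized Nikulin surfaces have finite automorphism groups; and (iv) that both sides have dimension $11$. The crucial implication \emph{finite automorphism groups} $\Rightarrow$ \emph{generically finite fibres} is left entirely implicit — it silently relies on a Mukai-style rigidity statement, namely that the Grassmannian realization $S\hookrightarrow G(2,V)$ is unique up to $\mathrm{Aut}(Q)\times\mathrm{Aut}(S,e,\OO_S(C))$. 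Your second and third paragraphs supply exactly this missing ingredient: you recover $Q$ via projective normality of $\phi_{\OO_S(C-e)}$ together with the Cauchy--Schwarz lattice estimate, and you recover the linear section via the spherical bundle $\mathcal E$ with Mukai vector $(2,C,3)$. As a bonus you obtain the birational isomorphism asserted (without proof) in the Introduction, not just dominance. I checked the numerical inputs: $\mathcal B^2=6$, $\mathcal B\cdot N_i=1$, the inequality $60a^2-40a+4\le 0$ has no integer solutions, $h^0(\I_{S/\PP^4}(2))=15-14=1$, $c_2(\mathcal E)=e\cdot(C-e)=4$, $\langle v,v\rangle=10-12=-2$; all correct.

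The one place where you should be more careful is exactly the one you flag in your last paragraph. When you appeal to the surfaces of Proposition \ref{genul6} to verify the open conditions (smoothness of $Q$, $h^0(\mathcal E)=5$, global generation, the classifying map reproducing $\PP(\Lambda)\cap W_Q$), you are implicitly using that those surfaces are \emph{general} in $\F_6^{\mathfrak{N}}$, i.e.\ that they satisfy $\mathrm{Pic}(S)=\Lambda_6$ — but that generality is in effect what the theorem asserts, so the appeal as phrased is circular. The fix is to run the verification the other way: show directly, for a general lattice-polarized $S$ with $\mathrm{Pic}(S)=\Lambda_6$, that the unique quadric through $\phi_{\mathcal B}(S)$ is nonsingular (e.g.\ by ruling out, via the same lattice argument, a rank $\le 4$ quadric cone, which would produce two elliptic pencils of $\mathcal B$-degree $\le 3$), and that the Mukai bundle $\mathcal E$ exists, is stable and globally generated with $h^0=5$. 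For the latter, note that $\mathrm{Ext}^1(\OO_S(\mathcal B),\OO_S(e))\cong H^1(S,\OO_S(2e-C))$ is one-dimensional (one has $(2e-C)^2=-6$, $\chi=-1$, and both $h^0$ and $h^2$ vanish because $(2e-C)\cdot\mathcal B=-2<0$ and $(C-2e)\cdot\mathcal B=2$ with negative self-intersection and no effective representative in $\Lambda_6$), so the extension class is unique up to scale, and then $H^i(S,e)=0$ gives $H^0(\mathcal E)\cong H^0(\mathcal B)$, of dimension $5$, intrinsically. With this reorganization your proof is complete and self-contained, and somewhat stronger than the one in the paper.
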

\begin{proof}
Via the embedding $\mbox{Aut}(Q)\subset PGL(V)\hookrightarrow PGL(\wedge^2 V)$, we observe that every automorphism of $Q$ induces an automorphism of $\PP(\wedge^2 V)$ that fixes both $W_Q$ and $V_Q$. Since (i) the moduli space $\F_6^{\mathfrak{N}}$ is irreducible and (ii) polarized Nikulin surfaces have finite automorphism groups, it suffices to observe that $\mbox{dim } G(7, \wedge^2 V)\dblq \mbox{Aut}(Q)=21-10=11$ and $\mbox{dim}(\F_6^{\mathfrak{N}})=11$ as well.
\end{proof}
\begin{corollary} The Prym-Nikulin locus $\cN_6\subset \cR_6$ is an irreducible unirational divisor, which is set-theoretically equal to the ramification locus of the Prym map $\mathrm{Pr}:\cR_6\rightarrow \cA_5$
$$\cU_{6, 0}=\{[C, \eta]\in \cR_6: K_{0, 2}(C, K_C\otimes \eta)\neq 0\}.$$
Furthermore, the exists a dominant rational map $G(6, \wedge^2 V)^{\mathrm{ss}}\dblq \mathrm{Aut}(Q)\dashrightarrow \cN_6.$
\end{corollary}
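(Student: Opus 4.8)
The plan is to deduce the corollary by combining the description of $\cN_6$ furnished by Theorem~\ref{gen6} with the geometry of the quadratic line complex $W_Q$ of this section; the only genuinely new ingredient to be produced is the Grassmannian parametrization. First I would record the set-theoretic identifications. Writing out the Koszul complex in bidegree $(0,2)$ shows that $K_{0,2}(C,K_C\otimes\eta)$ is the cokernel of the multiplication map $\mathrm{Sym}^2 H^0(C,K_C\otimes\eta)\to H^0\bigl(C,(K_C\otimes\eta)^{\otimes 2}\bigr)$; since $\eta^{\otimes 2}=\OO_C$ one has $(K_C\otimes\eta)^{\otimes 2}=K_C^{\otimes 2}$, and for $g=6$ both source and target have dimension $\binom{6}{2}=3g-3=15$, so $[C,\eta]\in\cU_{6,0}$ precisely when this map fails to be an isomorphism. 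This is exactly the condition defining $\cN_6$ in Theorem~\ref{gen6}, which moreover identifies the locus with the ramification divisor of $\mathrm{Pr}_6\colon\cR_6\to\cA_5$, the codifferential of the Prym map at $[C,\eta]$ being this same multiplication map. (The inclusion $\cN_6\subseteq\cU_{6,0}$ is in any case immediate from Theorem~\ref{prymgreen} with $i=0$.) That $\cN_6$ is a divisor is part of Theorem~\ref{gen6}, and its irreducibility is clear since $\cN_6=\mathrm{Im}(\chi_6)$ is the image of the irreducible variety $\mathcal{P}_6^{\mathfrak{N}}$ (a $\PP^6$-bundle over the irreducible $\F_6^{\mathfrak{N}}$).

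Next I would construct the Grassmannian parametrization by taking codimension four linear sections of $W_Q$ in the case $n=4$: thus $V=\mathbb C^5$, $\GG=G(2,V)\subset\PP^9$, and $W_Q$ is the five-dimensional quadratic complex of lines tangent to a fixed smooth quadric $Q\subset\PP(V)$, singular along the Veronese threefold $V_Q=\nu_2(\PP^3)$. For a general $\Lambda\in G(6,\wedge^2 V)$, i.e.\ a general $\PP^5\subset\PP(\wedge^2 V)$, the count $\dim V_Q+5-9=3+5-9<0$ forces $\Lambda\cap V_Q=\emptyset$, so by Bertini $C:=\Lambda\cap W_Q$ is a smooth irreducible curve lying in the smooth locus of $W_Q$. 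Choosing in addition a general $\Lambda'\in G(7,\wedge^2 V)$ containing $\Lambda$, Proposition~\ref{genul6} exhibits $\bar{Y}:=\Lambda'\cap W_Q$ as an $8$-nodal $K3$ surface whose desingularization $S:=p^{-1}(\bar{Y})$ carries the Nikulin structure $\bigl[S,\OO_S(H-B),\OO_S(H)\bigr]\in\F_6^{\mathfrak{N}}$, with $C$ a smooth hyperplane section of $\bar{Y}$ and therefore a canonical curve of genus $6$. I would then equip $C$ with the class $\eta_C:=\OO_C\bigl((H-B)|_C\bigr)=\OO_S(H-B)\otimes\OO_C$: because $C$ avoids $V_Q$ and the effective divisor $E_Q\equiv 2(H-B)$ is supported over $V_Q$, one gets $\eta_C^{\otimes 2}=E_Q|_C=\OO_C$, while $\eta_C$ is nontrivial since $\bigl[S,\OO_S(H-B),\OO_S(H)\bigr]$ defines a point of $\F_6^{\mathfrak{N}}$. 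Hence $[C,\eta_C]\in\cN_6$, and $\Lambda\mapsto[C,\eta_C]$ is a rational map $G(6,\wedge^2 V)\dashrightarrow\cN_6$; since $\mathrm{Aut}(Q)\subset PGL(V)\hookrightarrow PGL(\wedge^2 V)$ preserves $W_Q$ and $V_Q$ and acts trivially on $\cR_6$, this map is $\mathrm{Aut}(Q)$-invariant and descends to a rational map $\psi\colon G(6,\wedge^2 V)^{\mathrm{ss}}\dblq\mathrm{Aut}(Q)\dashrightarrow\cN_6$.

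To finish I would prove that $\psi$ is dominant. The dimension count $\dim\bigl(G(6,\wedge^2 V)\dblq\mathrm{Aut}(Q)\bigr)=6\cdot 4-\dim\mathrm{Aut}(Q)=24-10=14=\dim\cR_6-1=\dim\cN_6$ reduces this to showing that the image of $\psi$ is dense in $\cN_6$. By Theorem~\ref{param6} the Nikulin surfaces of the form $p^{-1}(\Lambda'\cap W_Q)$, $\Lambda'\in G(7,\wedge^2 V)$, sweep out a dense open subset of $\F_6^{\mathfrak{N}}$; pulling back along $p_6\colon\mathcal{P}_6^{\mathfrak{N}}\to\F_6^{\mathfrak{N}}$, the triples $[S,e,C]$ with $S$ of this shape form a dense open $U\subseteq\mathcal{P}_6^{\mathfrak{N}}$. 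Since $\chi_6\colon\mathcal{P}_6^{\mathfrak{N}}\to\cN_6$ is dominant with source irreducible of dimension $17$ and target of dimension $14$, a general fibre of $\chi_6$ has dimension $3$ and hence cannot be contained in the proper closed subset $\mathcal{P}_6^{\mathfrak{N}}\smallsetminus U$; therefore $\chi_6(U)$ is dense in $\cN_6$. Every point of $\chi_6(U)$ is represented by a hyperplane section $C=\Lambda'\cap H_0\cap W_Q$ of the corresponding $\bar{Y}$ and so lies in the image of $\psi$, which is thus dominant. As $G(6,\wedge^2 V)$ is rational, $\cN_6$ is in particular unirational, and the proof is complete.

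The step I expect to be the main obstacle is the density claim for $\psi$: one has to be sure that the $W_Q$-model Nikulin surfaces cover a dense subset of $\cN_6$ and not merely a proper subvariety, which is where the strict inequality $\dim\mathcal{P}_6^{\mathfrak{N}}-\dim\cN_6=3>0$ enters through the fibre-dimension argument. The genuinely deeper point — the reverse inclusion $\cU_{6,0}\subseteq\cN_6$, namely reconstructing the ambient Nikulin surface from a general Prym curve whose Prym-canonical model lies on a quadric — is delegated to Theorem~\ref{gen6}, and is the precise Prym analogue of Mukai's reconstruction of the linear sections of the $G_2$-variety.
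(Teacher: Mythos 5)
Your proof is correct and takes essentially the same route as the paper, which proves this corollary in a single sentence by noting that $\langle C\rangle=\PP^5$ has codimension $4$ in $\PP(\wedge^2 V)$, so there is a $\PP^3$ of codimension-$3$ Nikulin sections of $W_Q$ through $C$. You spell out the same flag-variety/$\PP^3$ argument explicitly, together with the verification that $\eta_C:=\OO_C(H-B)$ is a nontrivial $2$-torsion class and the reduction of the set-theoretic and ramification-locus statements to Theorem~\ref{gen6}; the fiber-dimension step in your density argument is harmless but superfluous, since the image of a dense open set under a dominant map is automatically dense.
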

\begin{proof} Just observe that $\langle C \rangle = \mathbf P^5$ and that this has codimension 4 in $\PP(\wedge^2 V)$,  hence there is a $\PP^3$ of
Nikulin sections of $W_Q$ containing $C$.
\end{proof}

The divisor $\kk_{10}\subset \mm_{10}$ of sections of $K3$ surfaces is known to be an extremal point of the effective cone $\mbox{Eff}(\mm_{10})$. An analogous result holds for the closure of $\cN_6$:
\begin{proposition}\label{extremal6}
The Prym-Nikulin divisor $\overline{\mathcal{N}}_6$ is extremal in the effective cone $\mathrm{Eff}(\rr_6)$:
\end{proposition}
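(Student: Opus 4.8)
The strategy is to exhibit a covering family of rational curves $R \subset \overline{\mathcal N}_6$ with $R \cdot \overline{\mathcal N}_6 < 0$; by the standard argument (if $\overline{\mathcal N}_6 = a D_1 + a_2 D_2$ with $D_i$ effective, then intersecting with a general $R$ in the covering family forces $\overline{\mathcal N}_6 \subseteq \mathrm{supp}(D_i)$ whenever $D_i \cdot R < 0$), this yields extremality. The natural source of such rational curves is the geometry developed in this section: given a general Nikulin surface $[S, e, \OO_S(C)] \in \F_6^{\mathfrak N}$ arising as a linear section of the quadratic complex $W_Q$, a Lefschetz pencil in $|\OO_S(C)| = |\OO_S(H)|$ induces a rational curve $\Xi_6 \subset \rr_6$ which, by construction, lies entirely inside $\overline{\mathcal N}_6$ since every member of the pencil is again a Prym-canonical curve on the same Nikulin surface. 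As the Nikulin surface varies over the $11$-dimensional $\F_6^{\mathfrak N}$ and the pencil varies, these curves sweep out all of $\cN_6$, giving the required covering family.

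The key computation is then $\Xi_6 \cdot \overline{\mathcal N}_6$. Since $\cN_6 = \cU_{6,0}$ (the Prym-Green locus for $i=0$) by the corollary just proved, its class in $\mathrm{Pic}(\rr_6)$ is given by the $i=0$ specialization of the formula quoted in the proof of Theorem \ref{prymgreen}, namely $\overline{\cU}_{6,0} \equiv \frac{21}{3}\lambda - \frac{3}{2}\delta_0^{\mathrm{ram}} - \delta_0' - \alpha\,\delta_0'' - \cdots$, i.e. $7\lambda - \delta_0' - \alpha\,\delta_0'' - \frac{3}{2}\delta_0^{\mathrm{ram}} - \cdots$. Plugging in the intersection numbers from Proposition \ref{intnumbers} with $g = 6$ — that is, $\Xi_6 \cdot \lambda = 7$, $\Xi_6 \cdot \delta_0' = 38$, $\Xi_6 \cdot \delta_0'' = 0$, $\Xi_6 \cdot \delta_0^{\mathrm{ram}} = 8$, and $\Xi_6 \cdot \pi^*(\delta_i) = 0$ for $i \geq 1$ — one finds $\Xi_6 \cdot \overline{\mathcal N}_6 = 7 \cdot 7 - 38 - \frac{3}{2}\cdot 8 = 49 - 38 - 12 = -1 < 0$, exactly mirroring the computation $\Xi_g \cdot \overline{\cU}_{g,i} = -\binom{2i+3}{i}$ specialized to $i = 0$. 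This negativity, combined with the fact that the $\Xi_6$ cover $\cN_6$, is precisely what is needed.

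Having established $\Xi_6 \cdot \overline{\mathcal N}_6 < 0$ for a covering family, extremality follows formally: if $\overline{\mathcal N}_6 \equiv D_1 + D_2$ with $D_1, D_2$ effective, then since a general $\Xi_6$ meets neither $D_1$ nor $D_2$ in a component other than possibly those of $\overline{\mathcal N}_6$, and since $\Xi_6$ moves in a family covering $\cN_6$, the negativity $\Xi_6 \cdot D_j < 0$ for the relevant $j$ forces $\cN_6 \subseteq \mathrm{supp}(D_j)$, hence $D_j \equiv \overline{\mathcal N}_6 + (\text{effective})$ and the decomposition is trivial. The main obstacle is bookkeeping: one must confirm the precise coefficient of $\delta_0'$ (and check that the omitted boundary terms $\pi^*(\delta_i)$, $i \geq 1$, genuinely do not contribute, which is guaranteed by $\Xi_6 \cdot \pi^*(\delta_i) = 0$ from Proposition \ref{intnumbers}), and also verify that a general Lefschetz pencil member remains a smooth Prym-canonical curve of the correct type so that $\Xi_6$ is not contained in any boundary divisor — this is exactly the content of the Nikulin-surface geometry in Proposition \ref{genul6} and Theorem \ref{param6}.
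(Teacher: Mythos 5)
Your proof is correct and follows exactly the same route as the paper's: it uses the covering family of rational curves $\Xi_6$ from Lefschetz pencils on Nikulin surfaces, the class of $\overline{\mathcal{N}}_6=\overline{\cU}_{6,0}$ from \cite{FL} Theorem 0.6, and the intersection numbers of Proposition \ref{intnumbers} to obtain $\Xi_6\cdot\overline{\mathcal{N}}_6=-1$, from which extremality follows by the standard moving-curve argument. The only (cosmetic) difference is that you derive the class by specializing the general Prym--Green formula at $i=0$, whereas the paper quotes the genus-$6$ class directly.
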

\begin{proof} It follows from \cite{FL} Theorem 0.6 that $\overline{\mathcal{N}}_6\equiv 7\lambda-\frac{3}{2}\delta_0^{\mathrm{ram}}-(\delta_0^{'}+\delta_0^{''})-\cdots\in \mathrm{Pic}(\rr_6)$. The divisor $\overline{\mathcal{KN}}_6$
is filled-up by the rational curves $\Xi_6\subset \rr_6$ constructed in the course of proving Theorem \ref{intnumbers}. We compute that
\ $\Xi_6\cdot \overline{\mathcal{N}}_6=-1$, which completes the proof.
\end{proof}

\section{Spin curves and the divisor $\thet$}

We turn our attention to the moduli space of spin curves and begin by setting notation and terminology. If $\bf{M}$ is a Deligne-Mumford stack, we denote by $\cM$ its associated coarse moduli space. A $\mathbb Q$-Weil divisor $D$ on a normal $\mathbb Q$-factorial projective variety $X$ is said to be \emph{movable} if $\mbox{codim}\bigl(\bigcap_{m} \mbox{Bs}|mD|, X\bigr)\geq 2$, where the intersection is taken over all $m$ which are sufficiently large and divisible. We say that $D$ is \emph{rigid} if $|mD|=\{mD\}$, for all $m\geq 1$ such that $mD$ is an integral Cartier divisor. The \emph{Kodaira-Iitaka dimension} of a divisor $D$ on $X$ is denoted by $\kappa(X, D)$.

If $D=m_1D_1+\cdots +m_sD_s$ is an effective $\mathbb Q$-divisor on $X$, with irreducible components $D_i\subset X$  and $m_i> 0$ for $i=1, \ldots, s$, a (trivial) way of showing
that $\kappa(X, D)=0$ is by exhibiting for each $1\leq i\leq s$, an irreducible curve $\Gamma_i\subset X$ passing through a general point of $D_i$, such that $\Gamma_i\cdot D_i<0$ and $\Gamma_i\cdot D_j=0$ for $i\neq j$.

\vskip 4pt

We recall basic facts about the moduli space $\ss_g^+$ and refer to \cite{C}, \cite{F1} for details.
\begin{definition}
An \emph{even spin curve} of genus
$g$ consists of a triple $(X, \eta, \beta)$, where $X$ is a genus
$g$ quasi-stable curve, $\eta\in \mathrm{Pic}^{g-1}(X)$ is a line
bundle of degree $g-1$ such that $\eta_{E}=\OO_E(1)$ for every
rational component $E\subset X$ with $|E\cap (\overline{X-E})|=2$ (such a component is called \emph{exceptional}), \ $h^0(X, \eta)\equiv 0
\mbox{ mod } 2$, and   $\beta:\eta^{\otimes
2}\rightarrow \omega_X$ is a morphism of sheaves which is generically
non-zero along each non-exceptional component of $X$.
\end{definition}
Even spin curves of genus $g$ form a smooth Deligne-Mumford stack $\pi:\overline{\bf{S}}_g^+\rightarrow \overline{\bf{M}}_g$. At the level of coarse moduli schemes, the morphism $\pi:\ss_g^+\rightarrow \mm_g$ is the stabilization map
$\pi([X, \eta, \beta]):=[\mathrm{st}(X)]$, which associates to a quasi-stable curve its stable model.

We explain the boundary structure of $\ss_g^+$: If $[X, \eta, \beta]\in \pi^{-1}([C\cup_y D])$,
where $[C, y]\in \cM_{i, 1}, [D, y]\in \cM_{g-i, 1}$ and $1\leq i\leq [g/2]$, then necessarily
$X=C\cup_{y_1} E\cup_{y_2} D$, where $E$ is an exceptional
component such that $C\cap E=\{y_1\}$ and $D\cap E=\{y_2\}$.
Moreover $\eta=\bigl(\eta_C, \eta_D, \eta_E=\OO_E(1)\bigr)\in
\mbox{Pic}^{g-1}(X)$, where $\eta_C^{\otimes 2}=K_C, \eta_D^{\otimes
2}=K_D$. The condition $h^0(X, \eta)\equiv 0 \mbox{ mod } 2$,
implies that the theta-characteristics $\eta_C$ and $\eta_D$ have
the same parity. We denote by $A_i\subset \ss_g^+$ the closure of
the locus corresponding to pairs $$([C,y, \eta_C], [D, y, \eta_D])\in
\cS_{i, 1}^+\times \cS_{g-i, 1}^+$$ and by $B_i\subset \ss_g^+$ the
closure of the locus corresponding to pairs $$([C, y, \eta_C], [D, y,
\eta_D])\in \cS_{i, 1}^-\times \cS_{g-i, 1}^{-}.$$

We set
$\alpha_i:=[A_i]\in \mathrm{Pic}(\ss_g^+), \beta_i:=[B_i]\in
\mathrm{Pic}(\ss_g^+)$, and then one has the relation
\begin{equation}\pi^*(\delta_i)=\alpha_i+\beta_i.
\end{equation}

We recall the description of the ramification divisor of the covering $\pi:\ss_g^+\rightarrow \mm_g$. For a point $[X, \eta, \beta]\in \ss_g^+$ corresponding to a stable model $\mbox{st}(X)=C_{yq}:=C/y\sim q$,
with $[C, y, q]\in \cM_{g-1, 2}$, there are two possibilities
depending on whether $X$ possesses an exceptional component or not.
If $X=C_{yq}$ (i.e. $X$ has no exceptional component) and $\eta_C:=\nu^*(\eta)$ where $\nu:C\rightarrow X$
denotes the normalization map, then $\eta_C^{\otimes 2}=K_C(y+q)$.
For each choice of $\eta_C\in \mathrm{Pic}^{g-1}(C)$ as above, there
is precisely one choice of gluing the fibres $\eta_C(y)$ and
$\eta_C(q)$ such that $h^0(X, \eta) \equiv 0 \mbox{ mod } 2$. We denote by $A_0$ the
closure in $\ss_g^+$ of the locus of spin curves $[C_{yq}, \eta_C\in
\sqrt{K_C(y+q)}]$ as above.

If $X=C\cup_{\{y, q\}} E$, where $E$ is an exceptional component,
then $\eta_C:=\eta\otimes \OO_C$ is a theta-characteristic on $C$.
Since $H^0(X, \omega)\cong H^0(C, \omega_C)$, it follows that $[C,
\eta_C]\in \cS_{g-1}^{+}$.  We denote by
$B_0\subset \ss_g^+$ the closure of the locus of spin curves
$$\bigl[C\cup_{\{y, q\}} E, \ E\cong \PP^1, \ \eta_C\in \sqrt{K_C}, \ \eta_E=\OO_E(1)\bigr]\in \cS_g^+.$$ If
$\alpha_0:=[A_0], \beta_0:=[B_0]\in
\mbox{Pic}(\ss_g^+)$,  we have the relation, see \cite{C}:
\begin{equation}\label{del0}
\pi^*(\delta_0)=\alpha_0+2\beta_0.
\end{equation} In particular, $B_0$ is the ramification divisor of $\pi$.
An important  effective divisor on $\ss_g^{+}$ is the locus of vanishing theta-nulls
$$\Theta_{\mathrm{null}}:=\{[C, \eta]\in \cS_g^{+}: H^0(C, \eta)\neq
0\}.$$ The class of its compactification inside $\ss_g^+$
is given by the formula, cf.
\cite{F1}:
\begin{equation}\label{thetanull}
\overline{\Theta}_{\mathrm{null}}\equiv
\frac{1}{4}\lambda-\frac{1}{16}\alpha_0-\frac{1}{2}\sum_{i=1}^{[g/2]}
 \beta_i\in \   \mathrm{Pic}(\ss_g^{+}).
 \end{equation}
 It is also useful to recall the formula for the canonical class of $\ss_g^+$:
$$K_{\ss_g^{+}}\equiv\pi^*(K_{\mm_g})+\beta_0 \equiv
13\lambda-2\alpha_0-3\beta_0-2\sum_{i=1}^{[g/2]}
(\alpha_i+\beta_i)-(\alpha_1+\beta_1).$$

An argument involving spin curves on certain singular canonical surfaces in $\PP^6$, implies that for $g\leq 9$, the divisor $\thet$ is uniruled and a rigid point in the cone of effective
divisors $\mathrm{Eff}(\ss_g^+)$:

\begin{theorem}\label{extremalthetanull}
For $g\leq 9$ the divisor $\thet\subset \ss_g^+$ is uniruled and
rigid. Precisely, through a general point of $\thet$ there passes a rational curve $\Gamma \subset \ss_g^+$ such that $\Gamma \cdot \thet<0$. In particular, if  $D$ is an effective divisor on $\ss_g^+$ with $D\equiv n\thet$ for some $n\geq 1$, then $D=n\thet$.
\end{theorem}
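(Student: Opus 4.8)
The plan is to reduce the statement to the existence of a single covering family of sufficiently negative rational curves on $\thet$, and then to produce such curves from pencils of theta-null curves on a suitable $K3$ surface. As in the discussion preceding the statement, in order to prove that $\kappa(\ss_g^+,D)=0$ for an effective divisor $D=\sum_i m_iD_i$ it suffices to exhibit, through a general point of each $D_i$, an irreducible curve $\Gamma_i$ with $\Gamma_i\cdot D_i<0$ and $\Gamma_i\cdot D_j=0$ for $j\ne i$. Since $\thet$ is irreducible, everything reduces to finding rational curves $\Gamma\subset\ss_g^+$ which sweep out $\thet$ and satisfy $\Gamma\cdot\thet<0$. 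Indeed, if $D\equiv n\thet$ is effective with $n\ge 1$, then $\Gamma\cdot D=n(\Gamma\cdot\thet)<0$, so some component $D_i$ of $D$ must contain $\Gamma$; letting $\Gamma$ range over its covering family forces $\thet\subseteq D_i$, hence $D_i=\thet$. Subtracting one copy of $\thet$ from $D$ leaves an effective divisor numerically equivalent to $(n-1)\thet$, and induction on $n$ --- the case $n=0$ being the fact that an effective divisor numerically equivalent to zero vanishes --- yields $D=n\thet$. The same family of rational curves exhibits $\thet$ as a uniruled divisor.

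To construct $\Gamma$, recall that a general point $[C,\eta]\in\thet$ satisfies $h^0(C,\eta)=2$; equivalently, the canonically embedded curve $C\subset\PP^{g-1}$ lies on a quadric $Q$ of rank $3$, and $\eta$ is the pencil of degree $g-1$ cut on $C$ by the ruling of $Q$. The idea is to spread $C$ out in a pencil of such curves. I would produce a (possibly nodal) $K3$ surface $S$ containing $C$, with $\OO_S(C)=H$, together with an auxiliary divisor class $F$ on $S$ restricting to $\eta$ on $C$, arranged so that the rank-$3$ quadric structure propagates: every smooth member $D$ of a suitable pencil $|H_0|\subseteq|H|$ through $C$ is again a canonical curve carrying the vanishing theta-null $\OO_D(F)$. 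Concretely, one wants the rank-$3$ quadric through $C\subset\PP^{g-1}$ to lift to a quadric of rank at most $4$ through the surface $S$, as in the genus-$8$ situation discussed above; this is the step where the hypothesis $g\le 9$ enters, since a parameter count for the pairs $(S,F)$ together with the choice of $D$ shows that the resulting curves dominate $\thet$ precisely in this range, in the spirit of Mukai's bounds for $\K_g$. A Lefschetz pencil $\{D_t\}_{t\in\PP^1}\subseteq|H_0|$ through $C$ then yields a rational curve
$$\Gamma:=\bigl\{\,[D_t,\ \OO_{D_t}(F)]:t\in\PP^1\,\bigr\}\subset\ss_g^+$$
lying inside $\thet$ and passing through the general point $[C,\eta]$.

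It then remains to compute $\Gamma\cdot\thet$ by means of the formula $\thet\equiv\frac14\lambda-\frac1{16}\alpha_0-\frac12\sum_i\beta_i$. The projection $\pi$ restricts to an isomorphism from $\Gamma$ onto a Lefschetz pencil of hyperplane sections of the $K3$ surface $S$, so the usual Euler-characteristic count gives $\Gamma\cdot\lambda=g+1$, $\Gamma\cdot\pi^*(\delta_0)=6g+18$ and $\Gamma\cdot\pi^*(\delta_i)=0$ for $i\ge 1$; since the general member of $\Gamma$ is a smooth curve, this forces $\Gamma\cdot\alpha_i=\Gamma\cdot\beta_i=0$ for $i\ge 1$. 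Moreover $\OO_{D_t}(F)$ extends over the finitely many nodal members of the pencil as a genuine line bundle on the nodal curve, squaring to its dualizing sheaf, so no exceptional component is created: $\Gamma$ meets the boundary of $\ss_g^+$ only along $A_0$, whence $\Gamma\cdot\beta_0=0$ and $\Gamma\cdot\alpha_0=\Gamma\cdot\pi^*(\delta_0)=6g+18$. Substituting,
$$\Gamma\cdot\thet=\frac{g+1}{4}-\frac{6g+18}{16}=-\frac{g+7}{8}<0,$$
which is what is needed.

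The principal obstacle is the construction in the second paragraph: exhibiting the family of (possibly singular) canonical $K3$ surfaces $S$, together with the auxiliary class $F$, along which the vanishing theta-null propagates over a positive-dimensional pencil of curves, and verifying that this family dominates $\thet$ for every $g\le 9$. The boundary bookkeeping of the third paragraph --- that $\Gamma$ avoids $B_0$ as well as all of $A_i$ and $B_i$ with $i\ge 1$ --- is then routine, and the rest is the numerical computation above.
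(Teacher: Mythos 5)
Your reduction to exhibiting a covering family of $\thet$-negative rational curves is exactly the strategy the paper uses (and is the content of the remark preceding Theorem \ref{extremalthetanull}), but both the construction you propose and the boundary bookkeeping contain genuine errors.

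First, the construction. You propose to realise the theta-null curve $C$ on a (possibly nodal) $K3$ surface $S$ carrying an auxiliary class $F$ such that the rank-$3$ quadric through $C$ lifts to a quadric of rank $\le 4$ through $S$, and then take a Lefschetz pencil of such sections. This is precisely the doubly-elliptic $K3$ mechanism that the paper uses in Section 6, but only for $g=8$, where it is the content of Propositions \ref{exprk4}, Corollary \ref{explicitfibre} and Lemma \ref{thetintersection}, and even there it is delicate: establishing that the locus of such sections dominates $\thet$ requires a nontrivial argument (Proposition 6.2, ruling out small-rank lifts $\tilde Q$, and a parameter count). The paper's proof of Theorem \ref{extremalthetanull} takes a different and considerably more robust route: after projecting the canonical model to $\PP^6$, one intersects the (projected) rank-$3$ quadric $\tilde Q$ with three further quadrics through $C$ to get a canonical surface $S$ of general type with $K_S=\OO_S(1)$ and $8$ nodes; then $\OO_C(C)=\OO_C$ by adjunction forces $\dim|C|=1$, and every member of $|C|$ automatically lies on the fixed rank-$3$ quadric $\tilde Q$, so the theta-null propagates with no further argument. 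The cases $g=5,6$ are handled by analogous complete-intersection canonical surfaces, and $g=4$ by a pencil on the Hirzebruch surface $\mathbb F_2$. This sidesteps entirely the $K3$ dominance issue which you yourself flag as the main obstacle and do not resolve.

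Second, and independently, the boundary computation. Even granting a doubly-elliptic $K3$ model, your assertion that $\Gamma\cdot\beta_0=0$ is false. The pencil along which the theta-null propagates is not a general pencil in $|C|$: it lives in the $\PP^3$ given by the image of $|C_1|\times|C_2|$ inside $|C_1+C_2|$, and a general line in that $\PP^3$ meets the Segre quadric $\PP^1\times\PP^1$ in two points, producing two \emph{reducible} fibres of type $E_1\cup E_2$ with $E_i\in|C_i|$. For those fibres the line bundle $\OO_S(F)$ restricts with degrees $(0,g-1)$ on the two components, not $(g-1-k,k)$ for a theta-characteristic, so one is forced to insert exceptional components at all $g-1$ intersection points; these spin curves lie in $B_0$. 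For $g=8$ the paper computes $R\cdot\beta_0=7$ (Lemma \ref{thetintersection}), not $0$, and correspondingly $R\cdot\thet=-1$, not your $-\tfrac{g+7}{8}=-\tfrac{15}{8}$. In the paper's canonical-surface construction one has instead $\Gamma\cdot\lambda=g+7$, $\Gamma\cdot\beta_0=8$ (from the eight nodes of $S$), $\Gamma\cdot\alpha_0=4g+60$, and $\Gamma\cdot\thet=-2$ for $5\le g\le 9$ (and $-1$ for $g=4$). So both the surface you use and the numbers you derive from it differ from the paper, and the $\beta_0$ term is not optional: it changes the final intersection number by a nonzero amount, and skipping it is an error even though in this instance the sign happens to survive.
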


\begin{proof} A general point $[C, \eta_C]\in \Theta_{\mathrm{null}}$ corresponds to a canonical
curve $C\stackrel{|K_C|}\hookrightarrow \PP^{g-1}$ lying on a rank $3$ quadric $Q\subset \PP^{g-1}$ such that
$C\cap \mathrm{Sing}(Q)=\emptyset$. The pencil $\eta_C$ is recovered from the ruling of $Q$. We construct the pencil $\Gamma\subset \ss_g^+$ by representing $C$ as a section of a nodal canonical surface $S\subset Q$ and noting that $\mbox{dim}\ |\OO_S(C)|=1$. The construction of $S$ depends on the genus and we describe the various cases separately.
\vskip 3pt

\noindent {\bf{(i)}} $7\leq g\leq 9$.
We choose $V\in G\bigl(7, H^0(C, K_C)\bigr)$  such that if $\pi_V:\PP^{g-1}\dashrightarrow \PP(V^{\vee})$ denotes the projection, then
 $\tilde{Q}:=\pi_V(Q)$ is a quadric of rank $3$. Let $C':=\pi_V(C)\subset \PP(V^{\vee})$ be the projection of the canonical curve $C$.
 By counting dimensions we find that
$$\mbox{dim}\Bigl\{I_{C'/\PP(V^{\vee})}(2):=\mbox{Ker}\bigl\{\mathrm{Sym}^2(V)\rightarrow H^0(C, K_C^{\otimes 2})\bigl\}\Bigr\}\geq 31-3g\geq 4,$$
that is, the embedded curve $C'\subset \PP^6$ lies on at least $4$ independent quadrics, namely the rank $3$ quadric $\tilde{Q}$ and $Q_1, Q_2, Q_3\in |I_{C'/\PP(V^{\vee})}(2)|$.
By choosing $V$ sufficiently general we make sure that  $S:=\tilde{Q}\cap Q_1\cap Q_2 \cap Q_3$ is a canonical surface in $\PP(V^{\vee})$ with $8$ nodes corresponding to the intersection $\bigcap_{i=1}^3 Q_i\cap \mathrm{Sing}(\tilde{Q})$ (This transversality statement can also be checked with Macaulay by representing $C$ as a section of the corresponding Mukai variety). From the exact sequence on $S$,
$$0\longrightarrow \OO_S\longrightarrow \OO_S(C)\longrightarrow \OO_C(C)\longrightarrow 0,$$
coupled with the adjunction formula $\OO_C(C)=K_C\otimes K_{S |C}^{\vee}=\OO_C$, as well as the fact $H^1(S, \OO_S)=0$, it follows that
$\mbox{dim }|C|= 1$, that is, $C\subset S$ moves in its linear system. In particular, $\thet$ is a  uniruled divisor for $g\leq 9$.

We determine the numerical parameters of the family $\Gamma \subset \ss_g^+$ induced by varying $C\subset S$. Since $C^2=0$, the pencil $|C|$ is base point free and gives rise to a fibration $f:\tilde{S}\rightarrow \PP^1$, where $\tilde{S}:=\mathrm{Bl}_8(S)$ is the blow-up of the nodes of $S$. This in turn induces a moduli map $m:\PP^1\rightarrow \ss_g^+$ and $\Gamma=:m(\PP^1).$  We have the formulas $$\Gamma \cdot \lambda=m^*(\lambda)=\chi(S, \OO_S)+g-1=8+g-1=g+7,\ \hfill$$ and
$$\Gamma\cdot \alpha_0+2\Gamma \cdot \beta_0=m^*(\pi^*(\delta_0))=m^*(\alpha_0)+2m^*(\beta_0)=c_2(\tilde{S})+4(g-1).$$
Noether's formula gives that
$c_2(\tilde{S})=12\chi(\tilde{S}, \OO_{\tilde{S}})-K_{\tilde{S}}^2=12\chi(S, \OO_S)-K_S^2=80$, hence $m^*(\alpha_0)+2m^*(\beta_0)=4g+76.$ The singular fibres  corresponding to spin curves lying in $B_0$ are those in the fibres over the blown-up nodes and all contribute with multiplicity $1$, that is, $\Gamma \cdot \beta_0=8$ and then $\Gamma \cdot \alpha_0=4g+60$.
It follows that $\Gamma\cdot \thet=-2<0$ (independent of $g$!), which finishes the proof.

\vskip 3pt
\noindent {\bf{(ii)}} $g=5$. In the case $C\subset Q\subset \PP^4$ and we choose a general quartic  $X\in H^0(\PP^4, \I_{C/\PP^4}(4))$ and set $S:=Q\cap X$. Then $S$ is a canonical surface with nodes at the $4$ points  $X\cap \mathrm{Sing}(Q)$. As in the previous case $\mbox{dim}\ |C|=1$, and the numerical characters of the induced family $\Gamma\subset \ss_5^+$ can be readily computed:
$$\Gamma\cdot \lambda=g+5=10,\ \Gamma\cdot \beta_0=|\mathrm{Sing}(S)|=4, \ \mbox{and } \ \Gamma\cdot \alpha_0=4g+52,$$
where the last equality is a consequence of Noether's formula $\Gamma\cdot (\alpha_0+2\beta_0)=12\chi(S, \OO_S)-K_S^2+4(g-1)=4g+60$.
By direct calculation, we obtain once more that $\Gamma\cdot \thet=-2$. The case $g=6$ is similar, except that the canonical surface $S$ is a $(2, 2, 3)$ complete intersection in $\PP^5$, where one of the quadrics is the rank $3$ quadric $Q$.

\vskip 3pt
\noindent {\bf{(iii)}} $g=4$. In this last case we proceed slightly differently and denote by $S=\mathbb F_2$ the blow-up of the vertex of a cone $Q\subset \PP^3$ over a conic in $\PP^3$ and write $\mathrm{Pic}(S)=\mathbb Z\cdot F+\mathbb Z\cdot C_0$, where $F^2=0$, $C_0^2=-2$ and $C_0\cdot F=1$. We choose a Lefschetz pencil of genus $4$ curves in the linear system $|3(C_0+2F)|$. By blowing-up the $18=9(C_0+2F)^2$ base points, we obtain a fibration $f:\tilde{S}:=\mathrm{Bl}_{18}(S)\rightarrow \PP^1$ which induces a family of spin curves $m:\PP^1\rightarrow \ss_4^+$ given by $m(t):=[f^{-1}(t), \OO_{f^{-1}(t)}(F)]$. We have the formulas
$$m^*(\lambda)=\chi(\tilde{S}, \OO_{\tilde{S}})+g-1=4, \   \mbox{ }\mbox{ and }$$
$$m^*(\pi^*(\delta_0))=m^*(\alpha_0)+2m^*(\beta_0)=c_2(\tilde{S})+4(g-1)=34.$$
 The singular fibres lying in $B_0$ correspond to curves in the Lefschetz pencil on $Q$ passing through the vertex of the cone, that is, when $f^{-1}(t_0)$ splits as $C_0+D$, where $D\subset \tilde{S}$ is the residual curve. Since $C_0\cdot D=2$ and $\OO_{C_0}(F)=\OO_{C_0}(1)$, it follows that $m(t_0)\in B_0$. One finds that $m^*(\beta_0)=1$, hence $m^*(\alpha_0)=32$ and
$m^*(\thet)=-1$. Since $\Gamma:=m(\PP^1)$ fills-up the divisor $\thet$, we obtain that $[\thet]\in \mathrm{Eff}(\ss_4^+)$ is rigid.
\end{proof}

\section{Spin curves of genus $8$}

The moduli space $\cM_8$ carries one Brill-Noether divisor, the locus of plane septics $$\cM_{8, 7}^2:=\{[C]\in \cM_8: G_7^2(C)\neq \emptyset\}.$$ The locus $\mm_{8, 7}^2$ is irreducible and for a known constant $c_{8, 7}^2\in \mathbb Z_{>0}$, one has, cf. \cite{EH1},
$$\mathfrak{bn}_8:=\frac{1}{c_{8, 7}^2} \mm_{8, 7}^2\equiv 22\lambda-3\delta_0-14\delta_1-24\delta_2-30\delta_3-32\delta_4\in \mbox{Pic}(\mm_8).$$
In particular, $s(\mm_{8, 7}^2)=6+12/(g+1)$ and this is the minimal slope of an effective divisor on $\mm_8$. The following fact is probably well-known:

\begin{proposition}\label{septics}
Through a general point of $\mm_{8, 7}^2$ there passes a rational curve $R\subset \mm_8$ such that $R\cdot \mm_{8, 7}^2<0$. In particular, the class $[\mm_{8, 7}^2]\in \mathrm{Eff}(\mm_8)$ is rigid.
\end{proposition}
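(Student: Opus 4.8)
The plan is to adapt the rational-curve technique used in the proof of Theorem~\ref{extremalthetanull}: I will produce a rational curve $R\subset \mm_8$ passing through a general point of $\mm_{8,7}^2$, contained in $\mm_{8,7}^2$, and satisfying $R\cdot \mm_{8,7}^2<0$. Once such curves are seen to sweep out $\mm_{8,7}^2$, rigidity follows by the standard peeling argument.

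First I would describe a general point $[C]\in\mm_{8,7}^2$ as the normalization of an irreducible plane septic with seven nodes at general points $p_1,\ldots,p_7\in\PP^2$: a dimension count ($14$ parameters for the seven points, $14$ for the linear system of septics singular at them, minus $8$ for $PGL_3$, totalling $20=\dim\mm_{8,7}^2$) shows every general point of $\mm_{8,7}^2$ arises this way. Let $\sigma\colon S:=\mathrm{Bl}_{p_1,\ldots,p_7}\PP^2\to\PP^2$, with $h=\sigma^*\OO_{\PP^2}(1)$ and exceptional divisors $e_1,\ldots,e_7$; for general $p_i$ the surface $S$ is a del Pezzo surface of degree $2$. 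The strict transform of the septic moves in $|C|$ with $C:=7h-2\sum_{i=1}^{7}e_i=-2K_S+h$, and one checks $\OO_S(C)$ is very ample, $\dim|C|=14$, $C^2=21$, while the general member of $|C|$ is a smooth genus-$8$ curve carrying the $\mathfrak g^2_7$ traced out by $|h|$, hence a point of $\mm_{8,7}^2$.

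Then I would take a general pencil $\mathcal P\subset|C|$; blowing up its $21$ base points yields a Lefschetz fibration $f\colon\widetilde S\to\PP^1$ with smooth genus-$8$ general fibre, hence a moduli map $m\colon\PP^1\to\mm_8$ with image a rational curve $R$. Since every member of $\mathcal P$ is a seven-nodal septic, $R\subset\mm_{8,7}^2$, and these $R$'s sweep out $\mm_{8,7}^2$ as $[C]$ and the pencil vary. The numerical characters follow from standard surface bookkeeping: $c_2(\widetilde S)=c_2(S)+21=10+21=31$ and $K_{\widetilde S}^2=K_S^2-21=-19$, so $R\cdot\lambda=\chi(\OO_{\widetilde S})+g-1=1+7=8$ and $R\cdot\delta_0=c_2(\widetilde S)+4(g-1)=31+28=59$, whereas $R\cdot\delta_i=0$ for $1\le i\le 4$ since a general pencil in a very ample system degenerates only to irreducible one-nodal curves. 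Consequently
$$R\cdot\mathfrak{bn}_8=22\cdot 8-3\cdot 59=-1<0,$$
so $R\cdot\mm_{8,7}^2=-c_{8,7}^2<0$. (As a by-product, the same curve shows $\mm_{8,7}^2$ computes the minimal slope and spans an extremal ray.)

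Finally, rigidity follows as in Theorem~\ref{extremalthetanull}: if $D$ is effective with $D\equiv n\,\mm_{8,7}^2$, write $D=a\,\mm_{8,7}^2+D'$ with $a\ge 0$ and $D'$ effective not containing $\mm_{8,7}^2$; a general $R$ meets $D'$ properly, so $0\le R\cdot D'=(n-a)R\cdot\mm_{8,7}^2$, forcing $n=a$ and $D=n\,\mm_{8,7}^2$. The main obstacle in making this rigorous is the bundle of transversality statements --- that seven general points give a del Pezzo surface, that $\OO_S(C)$ is very ample with smooth general member, and that a general pencil in $|C|$ is genuinely Lefschetz with only irreducible one-nodal degenerations (so that $R\cdot\delta_i=0$ for $i\ge1$ and the stated formula for $[\mm_{8,7}^2]$ applies verbatim) --- together with the dimension count identifying a general point of $\mm_{8,7}^2$ with a seven-nodal septic; the remaining steps are routine computation via Noether's formula.
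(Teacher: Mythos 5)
Your proof follows essentially the same construction as the paper's: a Lefschetz pencil of $7$-nodal plane septics with assigned nodes at $7$ general points, blown up at the $7$ nodes and $21$ unassigned base points to produce a moduli map $m:\PP^1\to\mm_8$ covering $\mm_{8,7}^2$, with the identical numerical bookkeeping $R\cdot\lambda=8$, $R\cdot\delta_0=59$, $R\cdot\delta_i=0$ ($i\geq 1$), hence $R\cdot\mathfrak{bn}_8=-1<0$. The paper is terser (it does not spell out the del Pezzo identification, the dimension count, or the peeling argument for rigidity), but the underlying argument and calculations are the same.
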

\begin{proof}
One takes a Lefschetz pencil of nodal plane septic curves with $7$ assigned nodes in general position (and $21$ unassigned base points). After blowing up the $21$ unassigned base points as well as the $7$ nodes, we obtain a fibration $f:S:=\mathrm{Bl}_{28}(\PP^2)\rightarrow \PP^1$, and the corresponding moduli map $m:\PP^1\rightarrow \mm_8$ is a covering curve for the irreducible divisor $\mm_{8, 7}^2$. The numerical invariants of this pencil are
$$m^*(\lambda)=\chi(S, \OO_S)+g-1=8\ \mbox{ and } m^*(\delta_0)=c_2(S)+4(g-1)=59,$$
while  $m^*(\delta_i)=0$ for $i=1, \ldots, 4$. We find  $m^*([\mm_{8, 7}^2])=c^2_{8, 7}(8\cdot 22-3\cdot 59)=-c^2_{8, 7}<0$.
\end{proof}

Using (\ref{thetanull}) we find the following explicit representative for the canonical class $K_{\ss_8^+}$:
\begin{equation}\label{can}
K_{\ss_8^+}\equiv \frac{1}{2} \pi^*(\mathfrak{bn}_8)+8\thet+\sum_{i=1}^4 (a_i\ \alpha_i+b_i\ \beta_i),
\end{equation}
where $a_i, b_i>0$ for $i=1, \ldots, 4$. The multiples of each irreducible component appearing in (\ref{can})  are rigid divisors on $\ss_8^+$, but in principle, their sum could still be a movable class. Assuming for a moment Proposition \ref{pencil8}, we explain how this implies Theorem \ref{spin8}:
\vskip 5pt

\noindent \emph{Proof of Theorem \ref{spin8}.} The covering curve $R\subset \thet$ constructed in Proposition \ref{pencil8}, satisfies $R\cdot \thet<0$ as well as $R\cdot \pi^*(\mm_{8, 7}^2)=0$ and $R\cdot \alpha_i=R\cdot \beta_i=0$ for $i=1, \ldots, 4$. It follows from (\ref{can}) that for each $n\geq 1$, one has an equality of linear series on $\ss_8^+$
$$|nK_{\ss_8^+}|=8n\thet+|n(K_{\ss_8^+}-8\thet)|.$$

\noindent Furthermore, from (\ref{can}) one finds  constants $a_i'>0$ for $i=1, \ldots, 4$, such that if $$D\equiv 22\lambda-3\delta_0-\sum_{i=1}^4 a_i'\ \delta_i\in \mbox{Pic}(\mm_8),$$ then the difference $\frac{1}{2}\pi^*(D)-(K_{\ss_8^+}-8\thet)$ is still effective on $\ss_8^+$. We can thus write
$$0\leq \kappa(\ss_8^+)=\kappa\bigl(\ss_8^+, K_{\ss_8^+}-8\thet\bigr)\leq \kappa\bigl(\ss_8^+, \frac{1}{2}\pi^*(D)\bigr)=\kappa\bigl(\ss_8^+, \pi^*(D)\bigr).$$
We claim that $\kappa\bigl(\ss_8^+, \pi^*(D)\bigr)=0$.  Indeed, in the course of the proof of Proposition \ref{septics} we have constructed a covering family $B\subset \mm_8$ for the divisor $\mm_{8, 7}^2$ such that $B\cdot \mm_{8, 7}^2<0$ and $B\cdot \delta_i=0$ for $i=1, \ldots, 4$. We lift $B$ to a family $R\subset \ss_8^+$ of spin curves by taking
$$\tilde{B}:=B\times _{\mm_8} \ss_8^+=\{[C_t, \ \eta_{C_{t}}]\in \ss_8^{+}:
[C_{t}]\in B, \eta_{C_{t}}\in \overline{\mbox{Pic}}^{7}(C_{t}), t
\in \PP^1 \}\subset \ss_8^+.$$ One notes that $\tilde{B}$ is disjoint from the boundary divisors $A_i, B_i\subset \ss_8^+$ for $i=1, \ldots, 4$, hence  $\tilde{B}\cdot \pi^*(D)=2^{g-1}(2^g+1) (B\cdot \mm_{8, 7}^2)_{\mm_8}<0$. Thus we write that
$$\kappa\bigl(\ss_8^+, \pi^*(D)\bigr)=\kappa\bigl(\ss_8^+, \pi^*(D-(22\lambda-3\delta_0)\bigr)=\kappa \bigl(\ss_8^+, \sum_{i=1}^4 a_i'(\alpha_i+\beta_i)\bigr)=0.$$
$ \hfill \Box$

\section{A family of spin curves  $R \subset \ss^+_8$ with $R \cdot \pi^*(\mm_{8,7}^2) = 0$ and $R \cdot \thet <0$}
The aim of this section is to prove Proposition \ref{pencil8}, which is the key ingredient in the proof of Theorem \ref{spin8}. We begin by reviewing  facts about the geometry of $\mm_8$, in particular the construction of general curves of genus 8 as complete intersections in a rational homogeneous variety, see \cite{M2}.

We fix $V:=\mathbb C^6$ and denote by $\GG:=G(2, V)\subset \PP(\wedge^2 V)$ the Grassmannian of lines. Noting that smooth codimension $7$ linear sections of $\GG$ are canonical curves of genus $8$, one is led to consider the \emph{Mukai model} of the moduli space of curves of genus $8$
$$\mathfrak{M}_8:=G(8, \wedge^2 V)^{\mathrm{st}}\dblq SL(V).$$
There is a birational map $f:\mm_8\dashrightarrow \mathfrak{M}_8$, whose inverse is given by $f^{-1}(H):=\GG\cap H$, for a general $H\in G(8, \wedge^2 V)$. The map $f$ is constructed as follows:  Starting with a curve $[C]\in \cM_8-\cM_{8, 7}^2$, one notes that $C$ has a finite number of pencils $\mathfrak g^1_5$. We choose $A\in W^1_5(C)$ and set $L:=K_C\otimes A^{\vee}\in W^3_9(C)$. There exists a unique rank $2$ vector bundle $E\in SU_C(2, K_C)$ (independent of $A$!), sitting in an extension
 $$0\longrightarrow A\longrightarrow E\longrightarrow L\longrightarrow 0,$$
 such that $h^0(E)=h^0(A)+h^0(L)=6$. Since $E$ is globally generated, we  define the map
 $$\phi_E: C\rightarrow G\bigl(2, H^0(E)^{\vee}\bigr), \ \mbox{    } \ \mbox{ } \phi_E(p):=E(p)^{\vee} \ \bigl(\hookrightarrow H^0(E)^{\vee}\bigr),$$ and let $\wp: G(2, H^0(E)^{\vee})\rightarrow \PP(\wedge^2 H^0(E)^{\vee})$ be the Pl\"ucker embedding.  The determinant map $u:\wedge ^2 H^0(E)\rightarrow H^0(K_C)$ is surjective and we can view $H^0(K_C)^{\vee}\in G(8, \wedge^2 H^0(E)^{\vee})$, see \cite{M2} Theorem C. We set
 $$f([C]):=H^0(K_C)^{\vee} \ \mathrm{ mod }\  SL(H^0(E)^{\vee}) \in \mathfrak{M}_8,$$
 that is, we assign to $C$  its linear span $\langle C\rangle$ under the Pl\"ucker map
 $\wp\circ \phi_E:C\rightarrow \PP\bigl(\wedge^2 H^0(E)^{\vee}\bigr)$.

  Even though this is not strictly needed for our proof, it follows from \cite{M2} that the exceptional divisors of $f$ are the Brill-Noether locus $\mm_{8, 7}^2$ and the boundary divisors $\Delta_1, \ldots, \Delta_4$. The map $f^{-1}$ does not contract any divisors.
 \vskip 5pt

Inside the moduli space $\F_8$ of polarized $K3$ surfaces $[S, h]$ of degree $h^2=14$, we consider the following \emph{Noether-Lefschetz} divisor
$$\mathfrak{NL}:=\{[S, \OO_S(C_1+C_2)]\in \F_8: \mathrm{Pic}(S)\supset \mathbb Z\cdot C_1\oplus \mathbb Z\cdot C_2,\  \  C_1^2=C_2^2=0, \ C_1\cdot C_2=7\},$$
of doubly-elliptic $K3$ surfaces. For a general element $[S, \OO_S(C)]\in \mathfrak{NL}$, the embedded surface $\phi_{\OO_S(C)}:S\hookrightarrow \PP^8$ lies on a rank $4$ quadric whose rulings induce the elliptic pencils $|C_1|$ and $|C_2|$ on $S$.

Let $\cU\rightarrow \mathfrak{NL}$ be the space classifying pairs $\bigl([S, \OO_S(C_1+C_2)], C\subset S\bigr)$, where $$C\in |H^0(S, \OO_S(C_1))\otimes H^0(S, \OO_S(C_2))|\subset |H^0(S, \OO_S(C_1+C_2))|.$$ An element of $\cU$ corresponds to a hyperplane section $C\subset S\subset \PP^8$ of a doubly-elliptic $K3$ surface, such that the intersection of $\langle C\rangle $ with the rank $4$ quadric induced by the elliptic pencils, has rank at most $3$. There exists a rational map $$q:\cU\dashrightarrow \thet, \ \ \ \mbox{ } \ q\bigl([S, \OO_S(C_1+C_2)], C\bigr):=[C, \OO_C(C_1)=\OO_C(C_2)].$$ Since $\cU$ is birational to a $\PP^3$-bundle over an open subvariety of $\mathfrak{NL}$, we obtain that $\cU$ is irreducible and $\mbox{dim}(\cU)=21 \bigl(=3+\mbox{dim}(\mathfrak{NL})\bigr)$. We shall show that the morphism $q$ is dominant (see Corollary \ref{explicitfibre}) and begin with some preparations.
\vskip 5pt

We fix a general point  $[C, \eta]\in \thet\subset \ss_8^+$, with $\eta$ a vanishing theta-null. Then
$$
C \subset Q \subset \PP^7:=\PP\bigl(H^0(C, K_C)^{\vee}\bigr),
$$
where $Q\in H^0(\PP^7, \mathcal{I}_{C/\PP^7}(2))$ is the rank $3$ quadric such that the ruling of $Q$ cuts out on $C$ precisely  $\eta$.
As explained, there exists a linear embedding $\PP^7 \subset \PP^{14}:=\PP\bigl(\wedge^2 H^0(E)^{\vee}\bigr)$  such that
$\PP^7 \cap \GG = C$. The restriction map yields an isomorphism between spaces of quadrics, cf. \cite{M2},
$$\mathrm{res}_C: H^0(\GG, \mathcal{I}_{\GG/\PP^{14}}(2))\stackrel{\cong}\longrightarrow H^0(\PP^7, \mathcal{I}_{C/\PP^7}(2)).$$
In particular there is a unique quadric $\GG\subset \tilde{Q} \subset \PP^{14}$ such that $\tilde{Q} \cap \PP^7=Q$.

There are three possibilities for the rank of any quadric $\tilde{Q}\in H^0(\PP^{14}, \mathcal{I}_{\GG/\PP^{14}}(2))$:
\  (a) $\mathrm{rk}(\tilde{Q})=15$, \ (b) $\mathrm{rk}(\tilde{Q})=6$ and then $\tilde{Q}$ is a \emph{Pl\"ucker quadric}, or \ (c) $\mbox{rk}(\tilde{Q})=10$, in which case $\tilde{Q}$ is a sum of two Pl\"ucker quadrics, see \cite{M2} Proposition 1.4.
\vskip4pt
\begin{proposition} For a general $[C, \eta]\in \thet$, the quadric $\tilde{Q}$ is smooth, that is, $\mathrm{rk}(\tilde{Q})=15$.
\end{proposition}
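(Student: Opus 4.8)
The plan is to reduce the statement to a non‑emptiness assertion and then establish that by an explicit construction. Since having rank $<15$ (equivalently, vanishing discriminant) is a closed condition in families, the locus $\{[C,\eta]\in\thet:\mathrm{rk}(\tilde Q)=15\}$ is open in $\thet$; as $\thet$ is irreducible, it suffices to produce a single even spin curve $[C,\eta]\in\thet$, with $[C]\notin\mm_{8,7}^2$ and $Q$ of rank exactly $3$ (so that the Mukai set‑up and the uniquely determined extension $\tilde Q$ are available), for which $\tilde Q$ is smooth. Note that the general point of $\thet$ does satisfy $[C]\notin\mm_{8,7}^2$ and $h^0(\eta)=2$, since $\thet$ is neither a component of $\pi^*(\mm_{8,7}^2)$ nor of the locus where $h^0(\eta)\geq 3$.

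Recall Mukai's trichotomy for quadrics through $\GG$ (\cite{M2} Proposition 1.4): writing $\tilde Q=P_\tau$ for the associated form $\tau$ parametrized by $\PP(\wedge^2 V)$, one has $\mathrm{rk}(\tilde Q)\in\{6,10,15\}$, with $\mathrm{rk}(\tilde Q)=6$ precisely when $\tau$ is decomposable (a genuine Pl\"ucker quadric) and $\mathrm{rk}(\tilde Q)=10$ precisely when $\mathrm{rk}(\tau)=4$, i.e. when $\tau$ lies on the Pfaffian cubic hypersurface $\overline{\mathrm{Sec}}(G(2,V))$. One approach is then to exclude ranks $6$ and $10$ directly. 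In the rank $6$ case $\mathrm{Sing}(\tilde Q)$ is an $8$‑plane meeting $\GG$ along the $5$‑dimensional Schubert variety of lines incident to a fixed line; in the rank $10$ case $\mathrm{Sing}(\tilde Q)$ is a $4$‑plane meeting $\GG$ along a smooth quadric threefold. Since $\mathrm{Sing}(\tilde Q)\cap\langle C\rangle\subseteq\mathrm{Sing}(Q)$ and $C\cap\mathrm{Sing}(Q)=\emptyset$, the condition $\mathrm{rk}(\tilde Q|_{\langle C\rangle})=3$ forces $\langle C\rangle=\PP^7$ to contain a sizeable linear subspace of $\mathrm{Sing}(\tilde Q)$, whence $C=\GG\cap\PP^7$ would have to contain a positive‑dimensional subscheme of $\GG\cap\mathrm{Sing}(\tilde Q)$ lying in a proper linear subspace of $\langle C\rangle$ — impossible for the irreducible, linearly normal genus $8$ canonical curve $C$. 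The work here is the linear‑algebra bookkeeping of $\dim\bigl(\mathrm{rad}(\tilde Q)\cap\langle C\rangle\bigr)$ across the various sub‑cases.

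Alternatively, and more in the explicit spirit of the other arguments in the paper, one exhibits the required point directly. Take a general doubly‑elliptic $K3$ surface $[S,\OO_S(C_1+C_2)]\in\mathfrak{NL}$: its polarization is very ample and Brill--Noether general, as $\mathrm{Pic}(S)=\mathbb Z C_1\oplus\mathbb Z C_2$ contains no $(-2)$‑curve and no elliptic pencil of low degree, so by Mukai's theorem $S=\GG\cap\PP^8$ is a linear section. Let $Q_S\subset\PP^8$ be the rank $4$ quadric cut out by the two rulings. For a general member $C$ of the $(1,1)$‑system $I$ the span $\langle C\rangle=\PP^7$ contains $\mathrm{Sing}(Q_S)$, hence $Q:=Q_S|_{\PP^7}$ has rank $3$ and induces the vanishing theta‑null $\eta=\OO_C(C_1)=\OO_C(C_2)$; by uniqueness of extensions, the quadric $\tilde Q$ attached to $[C,\eta]$ is exactly the extension $\widetilde{Q_S}$ of $Q_S$ to $\GG$. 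It then remains to check $\mathrm{rk}(\widetilde{Q_S})=15$, which one does by a computation in coordinates (e.g. with Macaulay, after writing down an explicit doubly‑elliptic linear section of $G(2,6)$). In either route the main obstacle is the same and is genuine: the naive rank inequality only yields $\mathrm{rk}(\widetilde{Q_S})\geq 4$, so one must supply real input — the geometric analysis of $\mathrm{Sing}(\widetilde{Q_S})\cap\GG$, or an explicit calculation — to rule out the Pl\"ucker‑type degenerations of $\tilde Q$.
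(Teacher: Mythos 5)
Your reduction to exhibiting a single point of $\thet$ for which $\tilde Q$ has rank $15$, the openness observation, and your invocation of Mukai's trichotomy of ranks $\{6, 10, 15\}$ are all correct. However, neither of the two routes you sketch is complete, and the first one has a genuine gap.

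In the direct exclusion argument, the step ``whence $C = \GG \cap \PP^7$ would have to contain a positive-dimensional subscheme of $\GG \cap \mathrm{Sing}(\tilde Q)$'' does not follow. What the linear algebra gives is a lower bound on $\dim\bigl(\PP^7 \cap \mathrm{Sing}(\tilde Q)\bigr)$: if $\mathrm{rk}(\tilde Q) = 10$, then $L := \PP^7 \cap \mathrm{Sing}(\tilde Q)$ contains at least a line, and if $\mathrm{rk}(\tilde Q) = 6$ it contains at least a $\PP^3$. Since $C \cap \mathrm{Sing}(\tilde Q) \subseteq C \cap \mathrm{Sing}(Q) = \emptyset$, the contradiction you want is that $L \cap \GG$ is forced to be \emph{non-empty} (not positive-dimensional, and the irreducibility/linear normality of $C$ is not the issue). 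In the rank-$10$ case this works: $\GG \cap \mathrm{Sing}(\tilde Q)$ is a hypersurface (a quadric $3$-fold) in the $\PP^4 = \mathrm{Sing}(\tilde Q)$, so any line must meet it. But in the rank-$6$ case $\GG \cap \mathrm{Sing}(\tilde Q) = \sigma_3$ has codimension $3$ in $\mathrm{Sing}(\tilde Q) = \PP^8$, and a $\PP^3$ inside a $\PP^8$ can certainly avoid a codimension-$3$ subvariety (compare: a general $\PP^2 \subset \PP^5$ avoids the Segre $\PP^1 \times \PP^2$). So the Pl\"ucker case $\mathrm{rk}(\tilde Q) = 6$ is not ruled out by this reasoning, and that is precisely the degeneration that needs real work. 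The second route --- verifying $\mathrm{rk}(\widetilde{Q_S}) = 15$ for a concrete doubly-elliptic linear section of $\GG$ by explicit computation --- is a legitimate alternative strategy, but you have left it unfinished, as you acknowledge.

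The paper argues quite differently. Assuming $\mathrm{rk}(\tilde Q) < 15$, it shows, by a careful analysis of the intersection of $\PP(\mathrm{Ker}(u))$ (the space of hyperplanes through $\langle C\rangle$) with the dual quadric $\tilde Q^\vee$ using Mukai's description of $\tilde Q^\vee$ in both residual cases, that one can find a whole \emph{pencil} of $8$-planes $\Xi \supset \PP^7$ for which $S := \GG \cap \Xi$ is a $K3$ surface and $\mathrm{rk}(\tilde Q \cap \Xi) = 3$. Any such $S$ has $7$ nodes and carries an elliptic pencil $|E|$ with $C \equiv 2E + \Gamma_1 + \cdots + \Gamma_7$ on the minimal resolution, so $\mathrm{rk}\,\mathrm{Pic}(\tilde S) \geq 8$; a standard Noether--Lefschetz parameter count then shows that pairs $(S, C)$ of this shape fill a locus of dimension at most $20 = \dim \thet$. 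Since a general $[C,\eta]\in\thet$ would lie on a one-parameter family of such surfaces, this is a contradiction. The key step you are missing is precisely this moduli-theoretic dimension count, together with the analysis of $\PP(\mathrm{Ker}(u)) \cap \langle \tilde Q^\vee \rangle$ needed to produce the cutting pencil $\Xi$.
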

\begin{proof} We may assume that $\mbox{dim }G^1_5(C)=0$ (in particular $C$ has no $\mathfrak g^1_4$'s), and  $G^2_7(C)=\emptyset$. The space $\PP(\mathrm{Ker}(u))\subset \PP\bigl(\wedge^2 H^0(E)\bigr)$ is identified with the space of hyperplanes $H\in (\PP^{14})^{\vee}$ containing the canonical space $\PP^7$.
\vskip 3pt

\noindent {\emph{Claim:}} If $\mbox{rk}({\tilde{Q}})<15$, there exists a pencil of $8$-dimensional planes $\PP^7\subset \Xi \subset \PP^{14}$, such that $S:=\GG\cap \Xi$ \ is a  $K3$ surface containing $C$ as a hyperplane section, and
$$
\mathrm{rk}\bigl\{Q_{\Xi}:=\tilde{Q}\cap \Xi\in H^0(\Xi, \mathcal{I}_{S/\Xi}(2))\bigr\}=3.
$$

The conclusion of the claim contradicts the assumption that $[C, \eta]\in \thet$ is general. Indeed, we pick such an $8$-plane $\Xi$ and corresponding $K3$ surface $S$. Since $\mbox{Sing}(Q)\cap C=\emptyset$, where $Q_{\Xi}\cap \PP^7=Q$, it follows that $S\cap \mbox{Sing}(Q_{\Xi})$ is finite. The ruling of $Q_{\Xi}$ cuts out an elliptic pencil $|E|$ on $S$. Furthermore, $S$ has nodes at the points $S\cap \mbox{Sing}(Q_{\Xi})$. For numerical reasons, $|\mathrm{Sing}(S)|=7$, and then on the surface $\tilde{S}$ obtained from $S$ by resolving the $7$ nodes, one has the linear equivalence $$C\equiv 2E+\Gamma_1+\cdots +\Gamma_7,$$ where $\Gamma_i^2=-2, \ \Gamma_i\cdot E=1$ for $i=1, \ldots, 7$ and $\Gamma_i\cdot \Gamma_j=0$ for $i\neq j$. In particular $\mbox{rk}(\mbox{Pic}(\tilde{S}))\geq 8$. A standard parameter count, see e.g. \cite{Do}, shows that
 $$\mathrm{dim}\bigl\{(S, C): C\in |\OO_S(2E+\Gamma_1+\cdots+ \Gamma_7)|\bigr\}\leq 19-7+\mbox{dim}|\OO_{\tilde{S}}(C)|=20.$$
 Since $\mbox{dim}(\thet)=20$ and a general curve $[C]\in \thet$ lies on infinitely many such $K3$ surfaces $S$, one obtains a contradiction.

 \vskip 4pt
We are left with proving the claim made in the course of the proof. The key point is to describe the intersection $\PP(\mbox{Ker}(u))\cap \tilde{Q}^{\vee}$, where we recall that the linear span $\langle \tilde{Q}^{\vee} \rangle$ classifies  hyperplanes $H\in (\PP^{14})^{\vee}$ such that
$\mbox{rk}(\tilde{Q}\cap H)\leq \mbox{rk}(\tilde{Q})-1$. Note also that $\mbox{dim } \langle \tilde{Q}\rangle=\mbox{rk}(\tilde{Q})-2$.

If $\mbox{rk}(\tilde{Q})=6$, then $\tilde{Q}^{\vee}$ is contained in the dual Grassmannian $\GG^{\vee}:=G(2, H^0(E))$, cf. \cite{M2} Proposition 1.8. Points in the intersection $\PP(\mbox{Ker}(u))\cap \GG^{\vee}$ correspond to decomposable tensors $s_1\wedge s_2$, with $s_1, s_2\in H^0(C, E)$,  such that $u(s_1\wedge s_2)=0$. The image of the morphism $\OO_C^{\oplus 2}\stackrel{(s_1, s_2)}\longrightarrow E$ is thus a subbundle $\mathfrak g^1_5$ of $E$ and there is a bijection
$$ \PP(\mathrm{Ker}(u))\cap \GG\bigl(2, H^0(E)\bigr)\cong  W^1_5(C).$$ It follows, there are at most finitely many tangent hyperplanes to $\tilde{Q}$ containing the space $\PP^7=\langle C\rangle$, and consequently, $\mbox{dim}\bigl(\PP(\mbox{Ker}(u))\cap \langle \tilde{Q}^{\vee}\rangle\bigr)\leq 1$. Then there exists a codimension $2$ linear space $W^{12}\subset \PP^{14}$ such that $\mbox{rk}(\tilde{Q}\cap W)=3$, which proves the claim (and much more), in the case $\mbox{rk}(\tilde{Q})=6$.

When $\mbox{rk}(\tilde{Q})=10$, using the explicit description of the dual quadric $\tilde{Q}^{\vee}$ provided in \cite{M2} Proposition 1.8, one finds that  $\mbox{dim}\bigl(\PP(\mbox{Ker}(u))\cap \langle \tilde{Q}^{\vee}\rangle\bigr)\leq 4$. Thus there exists a codimension $5$ linear section
$W^9\subset \PP^{14}$ such that $\mbox{rk}(\tilde{Q}\cap W)=3$, which implies the claim when $\mbox{rk}(\tilde{Q})=10$ as well.

\end{proof}

We consider an $8$-dimensional linear extension
$\PP^7\subset \Lambda^8 \subset \PP^{14}$ of the canonical space $\PP^7=\langle C\rangle$, such that
$S_{\Lambda} := \Lambda \cap \GG$
is a smooth K3 surface. The restriction map
$$\mathrm{res}_{C/S_{\Lambda}}: H^0(\Lambda, \mathcal I_{S_{\Lambda}/\Lambda}(2)) \to H^0(\PP^7, \mathcal I_{C / \PP^7}(2))$$
is an isomorphism, see \cite{SD}. Thus there exists a \emph{unique}  quadric
$S_{\Lambda}\subset Q_{\Lambda} \subset \Lambda$
with $Q_{\Lambda} \cap \PP^7 = Q$.
Since $\mbox{rk}(Q)= 3$, it follows that $3 \leq \mbox{rk}(Q_{\Lambda}) \leq 5$ and it is easy to see that for a general
$\Lambda$, the corresponding quadric $Q_{\Lambda}\subset \Lambda$ is of rank $5$. We show however, that one can find $K3$-extensions of the canonical curve $C$, which lie on quadrics of rank $4$:

\begin{proposition}\label{exprk4}  For a general $[C, \eta]\in \thet$, there exists a pencil of $8$-dimensional extensions $$\PP(H^0(C, K_C)^{\vee})\subset \Lambda\subset \PP^{14}$$ such that $\mathrm{rk}(Q_{\Lambda})=4$. It follows that there exists a smooth $K3$ surface $S_{\Lambda}\subset \Lambda$ containing $C$ as a transversal hyperplane section, such that $\mathrm{rk}(Q_{\Lambda})=4$.
\end{proposition}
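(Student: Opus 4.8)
The plan is to reduce the statement to a computation with the fixed smooth quadric $\tilde{Q}\subset\PP^{14}$, and then to a tangency question on a quadric threefold. By the preceding Proposition one may assume $\mathrm{rk}(\tilde{Q})=15$. The first step is the observation that for \emph{every} $8$-dimensional linear extension $\PP^7\subset\Lambda\subset\PP^{14}$ of the canonical space $\PP^7=\langle C\rangle$, the intersection $\tilde{Q}\cap\Lambda$ contains $S_\Lambda=\Lambda\cap\GG$, restricts on $\PP^7$ to $\tilde{Q}\cap\PP^7=Q$, and is a genuine quadric in $\Lambda$ (indeed $\PP^7\not\subset\tilde{Q}$ since $Q$ has rank $3$, so $\Lambda\not\subset\tilde{Q}$); by the isomorphism $\mathrm{res}_{C/S_\Lambda}$ of \cite{SD}, the quadric in $\Lambda$ through $S_\Lambda$ restricting to $Q$ is unique, hence $Q_\Lambda=\tilde{Q}\cap\Lambda$ whenever $S_\Lambda$ is a $K3$ surface. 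It therefore suffices to produce $8$-planes $\Lambda\supset\PP^7$ with $\mathrm{rk}(\tilde{Q}\cap\Lambda)=4$ for which $S_\Lambda$ is smooth.

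Working on the underlying $15$-dimensional vector space, write $\PP^7=\PP(W)$, $Q=\tilde{Q}|_{W}$, and let $R=W\cap W^\perp$ be the radical of $Q$, where $\perp$ denotes polarity for $\tilde{Q}$; since $\mathrm{rk}(Q)=3$ one has $\dim R=5$ and $\PP(R)=\mathrm{Sing}(Q)$. A direct computation of radicals shows that for $\Lambda=W+\mathbb{C}v$ the rank of $\tilde{Q}|_{\Lambda}$ is $5$ unless $v\in R^\perp$, in which case it is at most $4$; as $W\subset R^\perp$ and $\dim R^\perp=10$, the extensions with $\mathrm{rk}(Q_\Lambda)\le 4$ are precisely those with $\PP(W)\subset\Lambda\subset\PP(R^\perp)$, and they form a pencil $\PP(R^\perp/W)\cong\PP^1$ --- geometrically, the pencil of $8$-planes through $C$ contained in the polar space $\PP(R^\perp)$ of the vertex $\mathrm{Sing}(Q)$. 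Since $R$ is totally isotropic it is the radical of $\tilde{Q}|_{R^\perp}$, so $\tilde{Q}|_{R^\perp}$ descends to a \emph{smooth} quadric $Q_{\mathrm{ess}}$ in $\PP(R^\perp/R)=\PP^4$, and $\PP(W/R)=\PP^2$ meets $Q_{\mathrm{ess}}$ in the smooth conic induced by $Q$. For $\Lambda$ in the pencil, $\mathrm{rk}(Q_\Lambda)$ equals the rank of the restriction of $Q_{\mathrm{ess}}$ to the hyperplane $\bar\Lambda=\Lambda/R\subset\PP^4$, and this is $4$ exactly when $\bar\Lambda$ is not tangent to $Q_{\mathrm{ess}}$, i.e. $\bar\Lambda\notin Q_{\mathrm{ess}}^\vee$.

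The crux is then to verify that the pencil of hyperplanes $\PP^3\supset\PP^2$, viewed as a line $\ell$ in $(\PP^4)^\vee$, is not contained in the dual quadric $Q_{\mathrm{ess}}^\vee$. I would choose coordinates with $\PP^2=\{x_4=x_5=0\}$ and write the Gram matrix of $Q_{\mathrm{ess}}$ in block form $\bigl(\begin{smallmatrix} A & B \\ B^{\top} & D \end{smallmatrix}\bigr)$, with $A$ the $3\times 3$ Gram matrix of the conic $Q_{\mathrm{ess}}\cap\PP^2$; this conic being smooth forces $A$ invertible, and $Q_{\mathrm{ess}}$ being smooth forces the whole matrix invertible. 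The restriction of $Q_{\mathrm{ess}}^\vee$ to $\ell$ is the bottom-right $2\times 2$ block of the inverse matrix, which by the Schur-complement formula equals $(D-B^{\top}A^{-1}B)^{-1}$; this is invertible, hence not identically zero on $\ell$. Therefore $\ell\cap Q_{\mathrm{ess}}^\vee$ consists of at most two points, and all remaining members of the pencil satisfy $\mathrm{rk}(Q_\Lambda)=4$.

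It remains to arrange $S_\Lambda$ smooth. Since $C=\GG\cap\PP^7$ is a smooth curve of the expected dimension, the intersection is transversal along $C$, hence so is $\GG\cap\Lambda$ for any $\Lambda\supset\PP^7$, and every $S_\Lambda$ is already smooth along $C$; as $C$ is the base locus of our pencil of $8$-planes inside $\PP(R^\perp)$, Bertini's theorem gives that the general member $S_\Lambda$ is smooth everywhere and so is a smooth $K3$. (As a check one may also note that both conditions hold simultaneously for $\Lambda=\langle S\rangle$, with $S\subset\PP^{14}$ a general doubly-elliptic $K3$ surface $[S,\OO_S(C_1+C_2)]\in\mathfrak{NL}$ realised as a linear section of $\GG$ and $C\in|C_1+C_2|$ a hyperplane section, so that $Q_\Lambda=Q_S$ has rank $4$.) Intersecting the open condition $\mathrm{rk}(Q_\Lambda)=4$ with the open condition ``$S_\Lambda$ smooth'' inside the pencil $\PP^1$ yields the desired $\Lambda$. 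The step I expect to be most delicate is precisely this last one: the pencil is highly non-generic among $8$-planes through $C$, so some care is needed to see that generic smoothness of its members survives --- this is where the transversality along $C$, or else the explicit doubly-elliptic example, does the work.
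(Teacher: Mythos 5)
Your proof is correct and, in substance, follows the paper's linear-algebraic construction: both proofs produce the same pencil of $8$-planes, namely those $\Lambda=W+\mathbb{C}v$ with $W=H^0(C,K_C)^{\vee}$ and $v$ ranging over the polar space $R^{\perp}$ of the radical $R=\mathrm{Ker}(Q)$. The paper parametrizes this pencil as $\PP\bigl(\mathrm{Ker}(u\circ\tilde{Q})/\mathrm{Ker}(Q)\bigr)$, which is canonically the same as your $\PP(R^{\perp}/W)$, since $\mathrm{Ker}(u\circ\tilde{Q})=W^{\perp}$ and $W^{\perp}+W=R^{\perp}$, and the inclusion $W^{\perp}\hookrightarrow R^{\perp}$ induces an isomorphism $W^{\perp}/R\stackrel{\cong}\to R^{\perp}/W$. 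Where you go beyond the paper's terse argument is in making explicit two points it leaves implicit: first, that a general member of the pencil has $\mathrm{rk}(Q_{\Lambda})$ equal to $4$ and not $3$ --- your Schur-complement computation on the quotient quadric $Q_{\mathrm{ess}}$ and its dual; the paper simply asserts $\mathrm{Ker}(Q_{\Lambda})=\mathrm{Ker}(Q)$, which in fact requires observing that $\tilde{Q}(v,v)$ does not vanish identically on $W^{\perp}$, i.e.\ that $\tilde{Q}|_{W^{\perp}}$ has positive rank (its radical is exactly $R$, so it has rank $2$) --- and second, that the general $S_{\Lambda}$ is smooth, which you obtain from transversality of $\GG\cap\Lambda$ along $C$ together with Bertini on the pencil, a step the paper does not address at all. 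Both additions are correct and genuinely needed to close the argument.
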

\begin{proof} We pass from projective to vector spaces and view the rank $15$ quadric $$\tilde{Q}: \wedge^2 H^0(C, E)^{\vee}\stackrel{\sim}\longrightarrow \wedge^2 H^0(C, E)$$ as an isomorphism, which by restriction to $H^0(C, K_C)^{\vee}\subset \wedge^2 H^0(C, E)^{\vee}$, induces the rank $3$ quadric
 $Q:H^0(C, K_C)^{\vee}\rightarrow H^0(C, K_C)$. The map $u\circ \tilde{Q}: \wedge^2 H^0(E)^{\vee}\rightarrow H^0(K_C)$ being surjective, its kernel $\mbox{Ker}(u\circ \tilde{Q})$ is a $7$-dimensional vector space containing the $5$-dimensional subspace $\mbox{Ker}(Q)$. We choose an arbitrary element $$[\bar{v}:=v+\mathrm{Ker}(Q)]\in \PP\Bigl(\frac{\mathrm{Ker}(u\circ \tilde{Q})}{\mathrm{Ker}(Q)}\Bigr)=\PP^1,$$ inducing a subspace
 $H^0(C, K_C)^{\vee}\subset \Lambda:=H^0(C, K_C)^{\vee}+\mathbb C v \subset \wedge^2 H^0(C, E)^{\vee},$
 with the property that $\mbox{Ker}(Q_{\Lambda})=\mbox{Ker}(Q)$, where $Q_{\Lambda}: \Lambda \rightarrow \Lambda^{\vee}$ is induced from $\tilde{Q}$ by restriction and projection. It follows that $\mbox{rk}(Q_{\Lambda})=4$ and there is a pencil of $8$-planes $\Lambda\supset \PP^7$ with this property.
\end{proof}

\vskip 4pt

 Let
$C \subset Q \subset \PP^7$
be a general canonical curve endowed with a vanishing theta-null, where $Q\in H^0\bigl(\PP^7, I_{C/\PP^7}(2)\bigr)$ is the corresponding rank $3$ quadric. We choose a general $8$-plane $\PP^7\subset \Lambda\subset \PP^{14}$ such that
$S:= \Lambda \cap \GG$
is a smooth K3 surface, and the lift of $Q$ to $\Lambda$
$$Q_{\Lambda}\in H^0\bigl(\Lambda, \mathcal{I}_{S/\Lambda}(2)\bigr)$$
has rank $4$ (cf. Proposition \ref{exprk4}). Moreover, we can assume that $S\cap \mbox{Sing}(Q_{\Lambda})=\emptyset$. The linear projection $f_{\Lambda}:\Lambda \dashrightarrow \PP^3$ with center $\mbox{Sing}(Q_{\Lambda})$, induces a regular map $f:S\rightarrow \PP^3$ with image the smooth quadric $Q_0\subset \PP^3$.  Then $S$ is endowed with two elliptic pencils $|C_1|$ and $|C_2|$ corresponding to the projections of $Q_0\cong \PP^1\times \PP^1$ onto the two factors.  Since $C\in |\OO_S(1)|$, one has a linear equivalence
$C\equiv C_1+C_2$, on $S$.
As already pointed out, $\mbox{deg}(f)=C_1\cdot C_2=C^2/2=7$. The condition $\mbox{rk}(Q_{\Lambda}\cap \PP^7)=\mbox{rk}(Q_{\Lambda})-1$, implies that the hyperplane $\PP^7\in (\Lambda)^{\vee}$ is the pull-back of a hyperplane from $\PP^3$, that is, $\PP^7=f^{-1}_{\Lambda}(\Pi_0)$, where $\Pi_0\in (\PP^3)^{\vee}$. This proves the following:

\begin{corollary}\label{explicitfibre}
The rational morphism $q:\cU\dashrightarrow \thet$ is dominant.
\end{corollary}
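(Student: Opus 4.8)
The plan is to show that a general point $[C,\eta]\in\thet$ belongs to the image of $q$; since the construction applies to every $[C,\eta]$ in a dense open subset of $\thet$, this yields dominance. All of the geometry needed has already been produced: from $[C,\eta]$ one has the rank $3$ quadric $C\subset Q\subset\PP^7=\PP(H^0(C,K_C)^\vee)$ whose ruling cuts out $\eta$, its unique lift to the smooth quadric $\GG\subset\tilde Q\subset\PP^{14}$, the pencil of $8$-planes $\PP^7\subset\Lambda\subset\PP^{14}$ with $\mathrm{rk}(Q_\Lambda)=4$ from Proposition~\ref{exprk4}, a smooth $K3$ surface $S=\Lambda\cap\GG$ having $C$ as a transversal hyperplane section with $S\cap\mathrm{Sing}(Q_\Lambda)=\emptyset$, the degree $7$ morphism $f\colon S\to Q_0\subset\PP^3$ obtained by projecting from $\mathrm{Sing}(Q_\Lambda)$ onto a smooth quadric, the two elliptic pencils $|C_1|,|C_2|$ pulled back from the rulings of $Q_0\cong\PP^1\times\PP^1$, the linear equivalence $C\equiv C_1+C_2$ on $S$, and the equality $\PP^7=f_\Lambda^{-1}(\Pi_0)$ for a hyperplane $\Pi_0\in(\PP^3)^\vee$. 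What remains is to assemble this into a point of $\cU$ lying over $[C,\eta]$.

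First I would check that $[S,\OO_S(C_1+C_2)]$ lies on $\mathfrak{NL}$. From $C\equiv C_1+C_2$ one gets $\OO_S(C_1+C_2)=\OO_S(C)$, the degree $14$ polarization; the classes $C_1,C_2$ are fibres of the two elliptic fibrations $S\to Q_0\to\PP^1$, so $C_1^2=C_2^2=0$ and $C_1\cdot C_2=\deg(f)\cdot\bigl(\OO_{Q_0}(1,0)\cdot\OO_{Q_0}(0,1)\bigr)=7$, whence the intersection form on $\langle C_1,C_2\rangle$ is nondegenerate and $\mathrm{Pic}(S)\supset\mathbb Z C_1\oplus\mathbb Z C_2$ — precisely the defining property of $\mathfrak{NL}$. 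Next, $C=S\cap\PP^7=S\cap f_\Lambda^{-1}(\Pi_0)=f^{-1}(\Pi_0\cap Q_0)$ is the preimage of a $(1,1)$-curve of $Q_0$; since $f^{*}H^0(\PP^3,\OO(1))$ is exactly the image of the multiplication map $H^0(S,\OO_S(C_1))\otimes H^0(S,\OO_S(C_2))\to H^0(S,\OO_S(C_1+C_2))$ — the Segre identity $\OO_{\PP^1\times\PP^1}(1,1)=\OO_{\PP^3}(1)|_{Q_0}$ — this says $C\in|H^0(S,\OO_S(C_1))\otimes H^0(S,\OO_S(C_2))|$. Hence $\bigl([S,\OO_S(C_1+C_2)],C\bigr)\in\cU$, and $q$ sends it to $[C,\OO_C(C_1)=\OO_C(C_2)]$.

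Finally I would identify $\OO_C(C_1)$ with $\eta$. The two line bundles $\OO_C(C_i)$ are cut out on $C$ by the two rulings of $Q_\Lambda$. A ruling $\PP^6$-plane of $Q_\Lambda$ has the form $M=\langle\mathrm{Sing}(Q_\Lambda),\ell\rangle$ with $\ell$ a line of $Q_0$; intersecting $M$ with the hyperplane $\PP^7$, which contains $\mathrm{Sing}(Q_\Lambda)$ — forced by $\mathrm{rk}(Q_\Lambda\cap\PP^7)=\mathrm{rk}(Q_\Lambda)-1$ — produces the $\PP^5$-plane $\langle\mathrm{Sing}(Q),\ell\cap\Pi_0\rangle$, a maximal linear subspace of the rank $3$ quadric $Q$, i.e.\ a plane of the unique ruling of $Q$; as $\ell$ runs over either ruling of $Q_0$, the point $\ell\cap\Pi_0$ runs over the whole conic $\Pi_0\cap Q_0$, so both families restrict onto the full ruling of $Q$. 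Therefore $\OO_C(C_1)=\OO_C(C_2)$ is the theta-characteristic cut on $C$ by the ruling of $Q$, namely $\eta$, so that $q\bigl([S,\OO_S(C_1+C_2)],C\bigr)=[C,\eta]$ and $q$ is dominant; note that the fibre of $q$ over a general $[C,\eta]$ contains the pencil of extensions $\Lambda$ from Proposition~\ref{exprk4}, which matches $\dim\cU=\dim\thet+1$. The only delicate step is this last identification — equivalently, the claim that both rulings of the rank $4$ quadric $Q_\Lambda$ restrict to the ruling of the rank $3$ quadric $Q$; the lattice computation placing $S$ on $\mathfrak{NL}$ and the verification that $C$ lies in the product linear system $|H^0(\OO_S(C_1))\otimes H^0(\OO_S(C_2))|$ are routine.
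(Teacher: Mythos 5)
Your proposal is correct and takes the same approach as the paper: the paper's proof of this corollary is a single line, simply asserting that the triple $[S,\OO_S(C_1+C_2),C]$ produced by the preceding construction (Proposition~\ref{exprk4} and the paragraph following it) lies in $q^{-1}([C,\eta])$. You spell out the three verifications the paper leaves implicit — membership in $\mathfrak{NL}$, membership of $C$ in the product linear subsystem via the Segre identity on $Q_0$, and the identification $\OO_C(C_1)=\OO_C(C_2)=\eta$ by restricting the two rulings of the rank $4$ quadric $Q_\Lambda$ to the unique ruling of the rank $3$ quadric $Q$ — but the underlying argument is identical.
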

\begin{proof} Keeping the notation from above, if $[C]\in \thet$ is a general point corresponding to the rank $3$ quadric $Q\in H^0(\PP^7, \I_{C/\PP^7}(2))$, then $[S, \OO_S(C_1+C_2), C]\in q^{-1}([C]).$
\end{proof}

\vskip 4pt
We begin the proof of Proposition \ref{pencil8} while retaining the set-up described above. Let us choose a general line $l_0\subset \Pi_0$ and denote by $\{q_1, q_2\}:=l_0\cap Q_0$. We consider the pencil $\{\Pi_t\}_{t\in \PP^1}\subset (\PP^3)^{\vee}$ of planes through $l_0$ as well as the induced pencil of curves of genus $8$
$$\{C_t:=f^{-1}(\Pi_t)\subset S\}_{t\in \PP^1},$$
each endowed with a vanishing theta-null induced by the
pencil $f_{t}: C_t\rightarrow Q_0\cap \Pi_t$.
\vskip 5pt

This pencil contains precisely two \emph{reducible} curves, corresponding to the planes $\Pi_{1}, \Pi_{2}$ in $\PP^3$ spanned by the rulings of $Q_0$ passing through $q_1$ and $q_2$ respectively. Precisely, if $l_i, m_i\subset Q_0$ are the rulings passing through $q_i$ such that $l_1\cdot l_2=m_1\cdot m_2=0$, then it follows that for $\Pi_1=\langle l_1, m_2\rangle, \Pi_2=\langle l_2, m_1\rangle$, the fibres $f^{-1}(\Pi_{1})$ and $f^{-1}(\Pi_2)$ split into two elliptic curves $f^{-1}(l_i)$ and $f^{-1}(m_j)$ meeting transversally in $7$ points. The half-canonical $\mathfrak g^1_7$ specializes to a degree $7$ admissible covering
 $$f^{-1}(l_i)\cup f^{-1}(m_j) \stackrel{f}\rightarrow l_i\cup m_j, \  \ i\neq j,$$ such that the $7$ points in $f^{-1}(l_i)\cap f^{-1}(m_j)$ map to $l_i \cap m_j$. To determine the point in $\ss_8^+$ corresponding to the admissible covering $\bigl(f^{-1}(l_i)\cup f^{-1}(m_j),\  f_{| f^{-1}(l_i)\cup f^{-1}(m_j)}\bigr)$, one must insert $7$ exceptional components at all the points of intersection of the two components. We denote by $R\subset \thet\subset \ss_8^+$ the pencil of spin curves obtained via this construction.

\begin{lemma}\label{nodalpencil} Each member  $C_t\subset S$ in the above constructed pencil is nodal. Moreover, each curve $C_t$ different from
$f^{-1}(l_1)\cup f^{-1}(m_2)$ and $f^{-1}(l_2)\cup f^{-1}(m_1)$ is irreducible. It follows that $R\cdot \alpha_i=R\cdot \beta_i=0$ for $i=1, \ldots, 4$.
\end{lemma}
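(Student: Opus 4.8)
The plan is to deduce everything from the finite morphism $f\colon S\to Q_0$ onto the smooth quadric $Q_0\cong\PP^1\times\PP^1\subset\PP^3$ constructed above, and in particular from its branch locus; throughout I use that $[C,\eta]\in\thet$ is general and, via Corollary~\ref{explicitfibre}, that the surface $S$ may be chosen with $[S]$ general in $\mathfrak{NL}$. Since $\OO_S(1)$ is very ample, $f$ contracts no curve and is therefore finite; because $f^{*}\OO_{Q_0}(1,1)=\OO_S(1)$ its degree is $C_1\cdot C_2=7$. From $K_S=f^{*}K_{Q_0}+R_f$ with $K_S=0$ and $K_{Q_0}=\OO_{Q_0}(-2,-2)$ one gets $R_f\in|\OO_S(2)|$ for the ramification divisor, which for general $[S]$ is reduced with $f$ simply ramified along it; consequently the branch curve $B_f:=f(R_f)\subset Q_0$ is a proper reduced curve.

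For the reducibility statement, recall that for general $[S]\in\mathfrak{NL}$ one has $\mathrm{Pic}(S)=\ZZ C_1\oplus\ZZ C_2$, so — as $C_1^2=C_2^2=0$ and $C_1\cdot C_2=7$ — the only way to write $\OO_S(C_1+C_2)$ as a sum of two non-zero effective classes is $C_1+C_2$ itself. Hence $C_t=f^{-1}(D_t)$, where $D_t:=\Pi_t\cap Q_0$ is the $(1,1)$-curve cut by $\Pi_t$, is reducible if and only if $C_t=f^{-1}(l)+f^{-1}(m)$ for a line $l$ of one ruling and a line $m$ of the other, equivalently $D_t=l\cup m$. The only members of the pencil of $(1,1)$-curves through $q_1,q_2$ that split into two rulings are $\langle l_1,m_2\rangle\cap Q_0$ and $\langle l_2,m_1\rangle\cap Q_0$, whose preimages are precisely the two reducible members in the statement.

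To prove nodality I would argue member by member. If $D_t$ is smooth (equivalently $C_t$ is irreducible), then $f$ is étale over $Q_0\setminus B_f$, so $C_t$ is smooth there; at a point $p\in R_f\cap C_t$ the local normal form $f\colon(u,v)\mapsto(u^{2},v)$ shows that $C_t$ is smooth at $p$ when $D_t$ is transverse at $f(p)$ to the relevant local branch of $B_f$, and acquires an ordinary node at $p$ when $D_t$ is simply tangent to that branch. Since $D_t$ is reduced and $D_t\not\subset B_f$, the curve $C_t$ is reduced; so it remains to rule out, for $l_0$ general, any member $D_t$ having contact $\geq 3$ with $B_f$ at a smooth point of $B_f$, or passing through a singular point of $B_f$ tangent to a local branch, or passing through a point of $B_f\cap\{q_1,q_2\}$ — in each of these excluded cases the preimage would fail to be nodal, while in the remaining generic configurations the analysis above (together with the elementary computation at a node or cusp of $B_f$ traversed transversally) yields at worst ordinary nodes. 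This is a dimension count that exploits the fact that $q_1,q_2$, hence also the residual rulings $l_i,m_j$, sweep out general points of $Q_0$ as $l_0$ varies; I expect this transversality analysis to be the main obstacle, precisely because $\{D_t\}$ is far from a general pencil of curves on $Q_0$ — it is pinned at the two base points $q_1,q_2$ and consists only of $(1,1)$-curves — so one cannot simply invoke Bertini and must instead combine the generality of $[S]$ (controlling $\mathrm{Sing}(B_f)$ and the simplicity of the ramification of $f$) with that of $l_0$. For the two reducible members $C_t=f^{-1}(l)\cup f^{-1}(m)$: each of $f^{-1}(l),f^{-1}(m)$ is a fibre of one of the elliptic fibrations $S\to\PP^1$ lying over a general point of its base, hence a smooth elliptic curve, since an elliptic fibration has only finitely many singular fibres; and $l\cap m$ is a general point of $Q_0$, so it lies off $B_f$, $f$ is étale over it, and $f^{-1}(l\cap m)$ consists of $7$ distinct points at which the two elliptic curves meet transversally. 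Hence every $C_t$ is nodal.

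Finally, the equalities $R\cdot\alpha_i=R\cdot\beta_i=0$ for $i=1,\dots,4$ are formal. Every $C_t$ is nodal and is either irreducible or a transverse union of two smooth elliptic curves meeting in $7$ points; inserting the exceptional components required by the spin structure does not change the dual graph up to contraction, so the stable model of every spin curve parametrised by $R$ is either smooth or lies in $\Delta_0$ and never in $\Delta_i$ for $i\geq 1$. Thus the morphism $\PP^1\to\ss_8^+$ parametrising $R$ avoids the boundary divisors $A_i,B_i$ lying over $\Delta_i\subset\mm_8$ for $i=1,\dots,4$; since $R$ is contained in no boundary divisor (its general point is a smooth even spin curve) and $\pi^{*}(\delta_i)=\alpha_i+\beta_i$ with $\alpha_i,\beta_i$ effective, the vanishing $R\cdot\pi^{*}(\delta_i)=(\pi_{*}R)\cdot\delta_i=0$ forces $R\cdot\alpha_i=R\cdot\beta_i=0$.
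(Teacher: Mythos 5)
Your proposal is correct and follows essentially the same route as the paper, whose own proof of this lemma is only two lines: ``This follows since $f:S\rightarrow Q_0$ is a regular morphism and the base line $l_0\subset \Pi_0$ of the pencil $\{\Pi_t\}_{t\in \PP^1}$ is chosen to be general.'' What you have done is unpack exactly what that terseness is concealing: the Picard-lattice argument for why the only reducible members are the two stated ones, the local normal form $f\colon (u,v)\mapsto(u^2,v)$ near the ramification divisor $R_f\in|\OO_S(2)|$ showing that simple tangency of $D_t$ with the branch curve yields an ordinary node while higher contact would give worse singularities, and the observation that $l_1\cap m_2$ and $l_2\cap m_1$ sweep out general points of $Q_0$ as $l_0$ varies (so that the two reducible fibres are transverse unions of smooth elliptic curves at seven distinct points). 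You candidly flag the transversality/dimension count --- that a general pencil of planes through the general line $l_0\subset\Pi_0$ avoids the codimension-two locus of $(1,1)$-curves with second-order contact with $B_f$ or tangency at $\mathrm{Sing}(B_f)$ --- as the step requiring further work; this is precisely the step the paper dispatches with the single word ``general.'' That count does go through (the pencils $\{\Pi_t\}$ through $[\Pi_0]$ fill the $\PP^2$ of lines through the general point $[\Pi_0]\in(\PP^3)^\vee$, so a general one avoids any fixed curve in $\PP^3$), so there is no error, but you are right that a fully written-out proof would need to carry it out.
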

\begin{proof} This follows since $f:S\rightarrow Q_0$ is a regular morphism and the base line $l_0\subset H_0$ of the pencil $\{\Pi_t\}_{t\in \PP^1}$ is chosen to be general.
\end{proof}

\begin{lemma} $R \cdot \pi^*(\mm^2_{7,8})= 0$.
\end{lemma}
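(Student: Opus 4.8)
The plan is to evaluate $R\cdot\pi^*(\mm_{8,7}^2)$ directly as an intersection number, using the class of the Brill--Noether divisor recorded above, namely $\mathfrak{bn}_8=\frac{1}{c_{8,7}^2}\mm_{8,7}^2\equiv 22\lambda-3\delta_0-14\delta_1-24\delta_2-30\delta_3-32\delta_4$, together with the relations $\pi^*(\delta_i)=\alpha_i+\beta_i$ for $0\le i\le 4$. Since $c_{8,7}^2>0$ it suffices to show $R\cdot\pi^*(\mathfrak{bn}_8)=0$. By Lemma~\ref{nodalpencil} the curve $R$ is disjoint from the boundary divisors $A_i,B_i\subset\ss_8^+$ for $i=1,\dots,4$, hence $R\cdot\pi^*(\delta_i)=0$ in that range, and the problem reduces to the numerical identity $22\,(R\cdot\lambda)=3\,(R\cdot\pi^*\delta_0)$.

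To compute $R\cdot\lambda$ and $R\cdot\pi^*(\delta_0)$ I would argue exactly as in the proof of Theorem~\ref{extremalthetanull}. First one identifies the pencil $\{C_t\}$ on the smooth K3 surface $S$: the members $C_t=f^{-1}(\Pi_t)$ constitute the sub-pencil of $|\OO_S(C_1+C_2)|$ obtained by pulling back, under the degree-$7$ morphism $f:S\rightarrow Q_0$, the pencil of hyperplane sections of $Q_0$ through the general line $l_0$. That pencil on $Q_0\cong\PP^1\times\PP^1$ is the pencil of $(1,1)$-curves through $l_0\cap Q_0=\{q_1,q_2\}$, and for $l_0$ general the points $q_1,q_2$ avoid the branch curve of $f$ while the two $(1,1)$-curves through them meet transversally there; hence the base scheme of $\{C_t\}$ on $S$ is the reduced set $f^{-1}(q_1)\sqcup f^{-1}(q_2)$ of $14$ points. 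Blowing these up produces $\tilde S=\mathrm{Bl}_{14}(S)$ carrying a genuine fibration $\tilde S\rightarrow\PP^1$ with fibres the curves $C_t$ of genus $g=8$ — this is the fibration inducing the moduli map $m:\PP^1\rightarrow\ss_8^+$ with $R=m(\PP^1)$.

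The standard formulas then give $R\cdot\lambda=\chi(S,\OO_S)+g-1=2+7=9$ and $R\cdot\pi^*(\delta_0)=c_2(\tilde S)+4(g-1)$; since $S$ is a K3 surface, $c_2(S)=24$, so $c_2(\tilde S)=c_2(S)+14=38$ and $R\cdot\pi^*(\delta_0)=38+28=66$. As $22\cdot 9=198=3\cdot 66$, this yields $R\cdot\pi^*(\mm_{8,7}^2)=c_{8,7}^2(22\cdot 9-3\cdot 66)=0$. The only step requiring care is the determination of the base scheme of $\{C_t\}$ as exactly $14$ reduced points — which is what guarantees the blow-up is a base-point-free fibration — whereas the rest is formula-plugging identical to the pencil computations already carried out in the proof of Theorem~\ref{extremalthetanull}. (Note that this argument shows the intersection number vanishes without claiming $R$ is literally disjoint from $\overline{\mm}_{8,7}^2$; the vanishing reflects that the slope $(R\cdot\delta_0)/(R\cdot\lambda)=66/9=22/3$ of the pencil equals the slope of $\mm_{8,7}^2$.)
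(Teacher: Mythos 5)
Your argument is correct and takes essentially the same route as the paper's proof. Both arguments reduce the claim, via $R\cdot\pi^*(\delta_i)=0$ for $i=1,\dots,4$ (from Lemma~\ref{nodalpencil}), to the observation that $\pi_*(R)$ has the numerical invariants of a Lefschetz pencil of genus-$8$ curves on a K3 surface, whose slope $\pi_*(R)\cdot\delta_0/\pi_*(R)\cdot\lambda=66/9=22/3=6+12/(g+1)$ equals $s(\mm_{8,7}^2)$; the paper states this slope identity directly, while you unwind it into the explicit values $R\cdot\lambda=9$ and $R\cdot\pi^*(\delta_0)=66$, which are the same numbers the paper records in Lemma~\ref{thetintersection}.
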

\begin{proof} We show instead that $\pi_*(R)\cdot \mm_{8, 7}^2=0$. From Lemma \ref{nodalpencil}, the curve $R$ is disjoint  from  the divisors $A_i, B_i$ for $i=1, \ldots, 4$, hence  $\pi_*(R)$ has the numerical characteristics of a Lefschetz pencil of curves of genus $8$ on a fixed $K3$ surface.

\noindent
In particular, $\pi_*(R)\cdot \delta/\pi_*(R)\cdot \lambda=6+12/(g+1)=s(\mm_{8, 7}^2)$ and $\pi_*(R)\cdot \delta_i=0$ for $i=1, \ldots, 4$. This implies the statement. \end{proof}
\vskip 4pt

\begin{lemma}\label{thetintersection}
 $R \cdot \thet = -1$. \end{lemma}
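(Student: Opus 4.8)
The starting point is the class formula \eqref{thetanull}, namely $\overline{\Theta}_{\mathrm{null}}\equiv\frac14\lambda-\frac1{16}\alpha_0-\frac12\sum_{i=1}^{4}\beta_i$, together with Lemma \ref{nodalpencil}, which already yields $R\cdot\beta_i=0$ for $i=1,\dots,4$. Writing $\alpha_0=\pi^*(\delta_0)-2\beta_0$, this reduces the statement to
$$16\,(R\cdot\overline{\Theta}_{\mathrm{null}})=4\,(R\cdot\lambda)-R\cdot\pi^*(\delta_0)+2\,(R\cdot\beta_0),$$
so I only need the three numbers $R\cdot\lambda$, $R\cdot\pi^*(\delta_0)$ and $R\cdot\beta_0$, which I expect to be $9$, $66$ and $7$ respectively.

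First I would compute $R\cdot\lambda$ and $R\cdot\pi^*(\delta_0)$. The members $C_t=f^{-1}(\Pi_t)$ form a pencil inside $|\OO_S(C_1+C_2)|=|\OO_S(1)|$ on the smooth $K3$ surface $S$, with base locus $f^{-1}(l_0)=f^{-1}(q_1)\sqcup f^{-1}(q_2)$, a reduced set of $2\,(C_1\cdot C_2)=14$ points. Blowing these up produces a fibration $h\colon\widetilde S:=\mathrm{Bl}_{14}(S)\to\PP^1$ with smooth total space; by Lemma \ref{nodalpencil} every fibre is nodal and all but two are irreducible, so $h$ is relatively minimal. Hence $R\cdot\lambda=\deg h_{*}\omega_{\widetilde S/\PP^1}=\chi(\OO_{\widetilde S})+g-1=\chi(\OO_S)+7=9$, and since $\pi_{*}(R)$ is the moduli curve of $h$ and $\widetilde S$ is smooth, $R\cdot\pi^*(\delta_0)=\pi_{*}(R)\cdot\delta_0$ equals the number of nodes among the fibres of $h$, namely $c_2(\widetilde S)+4(g-1)=(24+14)+28=66$.

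The remaining and most delicate input is $R\cdot\beta_0=7$, which I would establish by separating the singular fibres into two types. Over an irreducible nodal fibre $C_{t_0}$ the half-canonical bundle $\eta_{t_0}:=\OO_S(C_1)|_{C_{t_0}}$ is still a genuine theta-characteristic --- it squares to $\OO_S(C_1+C_2)|_{C_{t_0}}=\omega_{C_{t_0}}$ --- so no exceptional component is required and the corresponding point of $R$ lies on $A_0$ but not on $B_0$. Over each of the two reducible fibres $f^{-1}(l_i)\cup f^{-1}(m_j)$ one must instead insert exceptional components at the seven intersection points, exactly as in the admissible-covering description above; together these two fibres account for $14$ of the $66$ nodes. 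The key computation --- and the main obstacle --- is a local analysis of the finite covering $\pi\colon\ss_8^+\to\mm_8$ along its simple ramification divisor $B_0$ at the points of $R$ over the reducible fibres, using that all seven nodes of such a fibre are smoothed simultaneously by the pencil $\{\Pi_t\}$; this should show that the reducible fibres contribute nothing to $R\cdot\alpha_0$ and exactly $7$ to $R\cdot\beta_0$. Granting this, $R\cdot\alpha_0=66-14=52$ and
$$R\cdot\overline{\Theta}_{\mathrm{null}}=\tfrac14\cdot 9-\tfrac1{16}\cdot 52=\frac{36-52}{16}=-1,$$
as claimed. Everything apart from the last step is a routine Noether-type computation on the $K3$ surface $S$; the one point that genuinely requires care is pinning down how the vanishing theta-null degenerates at the two reducible members of the pencil and how $R$ then meets the ramification divisor $B_0$.
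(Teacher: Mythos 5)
Your overall structure exactly mirrors the paper's: the class formula for $\thet$, the Noether-type calculations $R\cdot\lambda=\chi(\OO_{\tilde S})+g-1=9$ and $R\cdot\pi^*(\delta_0)=c_2(\tilde S)+4(g-1)=66$, and the identification $R\cdot\beta_0=7$, $R\cdot\alpha_0=52$ as the remaining inputs. However, the one step you flag yourself as ``the main obstacle'' --- that the two reducible fibres contribute $7/2$ each to $R\cdot\beta_0$ --- is precisely where the content of the lemma lies, and you state the conclusion without proving it. This is a genuine gap.

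The argument cannot be reduced to a statement about ``the finite covering $\pi$ being simply ramified along $B_0$.'' What actually makes $R\cdot\beta_0=7$ rather than (say) $14$ is an orbifold phenomenon that you have not touched: the quasi-stable spin curve $[X,\eta,\beta]$ over a reducible fibre, obtained by inserting exceptional components $E_1,\dots,E_7$, has $\mathrm{Aut}(X,\eta,\beta)\cong\ZZ_2$ (even though its stable model has trivial automorphisms), cf.\ \cite{C}. In the versal deformation space $\mathbb C^{3g-3}_\tau$ of $[X,\eta,\beta]$, the pull-back of $B_0$ is cut out by $\tau_1\cdots\tau_7=0$, and the nontrivial automorphism acts by negating $\tau_1,\dots,\tau_7$. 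Passing to the coarse space $\ss_8^+$ therefore means dividing by this $\ZZ_2$; the upshot is that $B_0$ is \emph{not Cartier} at $t^*$ (only $2B_0$ is), and the multiplicity $7$ computed upstairs in $\mathbb C^{3g-3}_\tau$ becomes an intersection multiplicity of $7/2$ downstairs. Without identifying the $\ZZ_2$ automorphism group and its action on the versal deformation, there is no way to justify dividing by $2$, and the final answer would come out wrong --- e.g.\ a naive multiplicity count would give $R\cdot\beta_0=14$, hence $R\cdot\alpha_0=38$ and $R\cdot\thet=-1/8$. So the halving is not a technicality you can outsource; it is the crux, and it must be supplied.
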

 \begin{proof} We have already determined that
$
R\cdot \lambda =\pi_*(R)\cdot \lambda= \chi(\tilde{S}, \OO_{\tilde S}) + g - 1 = 9,
$ where $\tilde{S}:=\mbox{Bl}_{2g-2}(S)$ is the blow-up of $S$ at the points $f^{-1}(q_1)\cup f^{-1}(q_2)$.
Moreover,
\begin{equation}\label{relation}
R\cdot \alpha_0 + 2R\cdot \beta_0 =\pi_*(R)\cdot \delta_0= c_2(\tilde X) + 4(g-1) = 38 + 28 = 66.
\end{equation}
To determine $R\cdot \beta_0$ we study the local structure of $\ss_8^+$ in a neighbourhood of one of the two points, say $t^*\in R$ corresponding to a reducible curve, say $f^{-1}(l_1)\cup f^{-1}(m_2)$, the situation for $f^{-1}(l_2)\cup f^{-1}(m_1)$ being of course identical.
We set $\{p\}:=l_1\cap m_2\in Q_0$ and $\{x_1, \ldots, x_7\}:=f^{-1}(p)\subset S$. We insert exceptional components $E_1, \ldots, E_7$ at the nodes $x_1, \ldots, x_7$ of $f^{-1}(l_1)\cup f^{-1}(m_2)$ and denote by $X$ the resulting quasi-stable curve. If $$\mu: f^{-1}(l_1)\cup f^{-1}(m_2)\cup E_1\cup \ldots \cup E_7\rightarrow f^{-1}(l_1)\cup f^{-1}(m_2)$$ is the stabilization morphism, we set $\{y_i, z_i\}:=\mu^{-1}(x_i)$, where $y_i\in E_i\cap f^{-1}(l_1)$ and $z_i\in E_i\cap f^{-1}(m_2)$ for $i=1, \ldots, 7$. If $t^*=[X, \eta, \beta]$, then $\eta_{f^{-1}(l_1)}=\OO_{f^{-1}(l_1)}, \ \eta_{f^{-1}(m_2)}=\OO_{f^{-1}(m_2)}$, and of course $\eta_{E_i}=\OO_{E_i}(1)$. Moreover, one computes that $\mbox{Aut}(X, \eta, \beta)=\mathbb Z_2$, see \cite{C} Lemma 2.2, while clearly $\mbox{Aut}(f^{-1}(l_1)\cup f^{-1}(m_2))=\{\mbox{Id}\}$.

\vskip 5pt
If $\mathbb C_{\tau}^{3g-3}$  denotes the versal deformation space of $[X, \eta, \beta]\in \ss_g^+$, then there are local parameters $(\tau_1, \ldots, \tau_{3g-3})$, such that for $i=1, \ldots, 7$, the locus $\bigl(\tau_i=0\bigr)\subset \mathbb C_{\tau}^{3g-3}$ parameterizes spin curves for which the exceptional component $E_i$ persists. It particular, the pull-back $\mathbb C_{\tau}^{3g-3}\times _{\ss_g^+} B_0$ of the boundary divisor $B_0\subset \ss_g^+$ is given by the equation $\bigl(\tau_1\cdots \tau_7=0\bigr)\subset \mathbb C_{\tau}^{3g-3}$. The group $\mbox{Aut}(X, \eta, \beta)$ acts on $\mathbb C_{\tau}^{3g-3}$ by
$$(\tau_1, \ldots, \tau_7, \tau_8, \ldots, \tau_{3g-3})\mapsto (-\tau_1, \ldots, -\tau_7, \tau_8, \ldots, \tau_{3g-3}),$$
and since an \'etale neighbourhood of $t^*\in \ss_g^+$ is isomorphic to $\mathbb C_{\tau}^{3g-3}/\mbox{Aut}(X, \eta, \beta)$, we find that the Weil divisor $B_0$ is not Cartier around $t^*$ (though $2B_0$ is Cartier). It follows that the intersection multiplicity of $R\times _{\ss_g^+} \mathbb C_{\tau}^{3g-3}$ with the locus $(\tau_1\cdots \tau_7)=0$ equals $7$, that is, the intersection multiplicity of $R\cap \beta_0$ at the point $t^*$ equals $7/2$, hence
$$
R\cdot \beta_0 =\bigl(R\cdot \beta_0\bigr)_{f^{-1}(l_1)\cup f^{-1}(m_2)}+\bigl(R\cdot \beta_0\bigr)_{f^{-1}(l_2)\cap f^{-1}(m_1)}=\frac{7}{2}+\frac{7}{2}=7.
$$
Then using (\ref{relation}) we find that $R\cdot \alpha_0=66-14=52$, and finally
$$
R\cdot \thet = \frac{1}{4}R\cdot \lambda  - \frac{1}{16}R\cdot \alpha_0 = \frac{9}{4}- \frac{52}{16}= -1.
$$
\end{proof}

\end{document}